\documentclass[fleqn,preprint,11pt]{elsarticle}

\makeatletter
\def\ps@pprintTitle{%
 \let\@oddhead\@empty
 \let\@evenhead\@empty
 \def\@oddfoot{\centerline{\thepage}}%
 \let\@evenfoot\@oddfoot}
\makeatother

\usepackage[margin=3cm]{geometry}    

\usepackage{amsmath,amssymb,amsthm}
\usepackage{epsfig,graphics}
\usepackage{comment}
\usepackage{url}

\graphicspath{{./figures/}}

\usepackage{tabularx}

\usepackage{array,booktabs}
\usepackage{siunitx}
\setlength\tabcolsep{2pt}

\usepackage{algorithm}
\usepackage{algorithmicx}

\usepackage{color}

\newcommand{\Bk}{\color{black}}

\RequirePackage{xr-hyper}[6.00]
\RequirePackage{ifpdf}[2.3]
\RequirePackage{hyperref}[6.83]
\RequirePackage{xcolor}[2.11]
\colorlet{inlinkcolor}{green!50!black}
\colorlet{exlinkcolor}{red!50!black}
\hypersetup{
  colorlinks = true,
  allcolors = inlinkcolor,
  urlcolor = exlinkcolor,
}

\RequirePackage[capitalize,nameinlink]{cleveref}[0.19]

\crefname{section}{Section}{sections}
\crefname{subsection}{subsection}{subsections}
\Crefname{section}{Section}{Sections}
\Crefname{subsection}{Subsection}{Subsections}

\Crefname{figure}{Figure}{Figures}

\crefformat{equation}{\textup{#2(#1)#3}}
\crefrangeformat{equation}{\textup{#3(#1)#4--#5(#2)#6}}
\crefmultiformat{equation}{\textup{#2(#1)#3}}{ and \textup{#2(#1)#3}}
{, \textup{#2(#1)#3}}{, and \textup{#2(#1)#3}}
\crefrangemultiformat{equation}{\textup{#3(#1)#4--#5(#2)#6}}%
{ and \textup{#3(#1)#4--#5(#2)#6}}{, \textup{#3(#1)#4--#5(#2)#6}}{, and \textup{#3(#1)#4--#5(#2)#6}}

\Crefformat{equation}{#2Equation~\textup{(#1)}#3}
\Crefrangeformat{equation}{Equations~\textup{#3(#1)#4--#5(#2)#6}}
\Crefmultiformat{equation}{Equations~\textup{#2(#1)#3}}{ and \textup{#2(#1)#3}}
{, \textup{#2(#1)#3}}{, and \textup{#2(#1)#3}}
\Crefrangemultiformat{equation}{Equations~\textup{#3(#1)#4--#5(#2)#6}}%
{ and \textup{#3(#1)#4--#5(#2)#6}}{, \textup{#3(#1)#4--#5(#2)#6}}{, and \textup{#3(#1)#4--#5(#2)#6}}

\crefdefaultlabelformat{#2\textup{#1}#3}

\DeclareMathOperator{\sign}{sign}
\newcommand{\eps}{\varepsilon}

\newcommand{\be}{\begin{equation}}
\newcommand{\ee}{\end{equation}}
\newcommand{\ba}{\begin{aligned}}
\newcommand{\ea}{\end{aligned}}
\newcommand{\bea}{\begin{eqnarray}}
\newcommand{\eea}{\end{eqnarray}}

\def\tx{{\tilde{x}}}
\def\ty{{\tilde{y}}}
\def\txi{{\tilde{\xi}}}
\def\teta{{\tilde{\eta}}}
\def\td{{\tilde{d}}}

\def\C{\mathcal{C}}
\def\N{\mathcal{N}}
\def\F{\mathcal{F}}

\def\ltfar{\Lambda_{far}}
\def\lt2{\Lambda_{far}}
\def\lt{{\Lambda}}

\def\NUFFTone{{\bf N}}
\def\NUFFTtwo{{\bf N}^\ast}

\def\f{{\bf f}}

\def\y{{\bf y}}

\def\bc{{\bf c}}
\def\bl{{\bf l}}
\def\bn{{\bf n}}
\def\bq{{\bf q}}
\def\bs{{\bf s}}
\def\bt{{\bf t}}
\def\bu{{\bf u}}
\def\bv{{\bf v}}
\def\bw{{\bf w}}

\def\bD{{\bf D}}
\def\bI{{\bf I}}
\def\bL{{\bf L}}

\def\bR{{\bf R}}
\def\bS{{\bf S}}
\def\bT{{\bf T}}

\def\G{{\bf G}}
\def\K{{\bf K}}
\def\P{{\bf P}}

\def\eh{{\hat{\bf e}}}

\def\half{\frac{1}{2}}

\newtheorem{theorem}{Theorem}
\newtheorem{lemma}{Lemma}
\newtheorem{corollary}{Corollary}

\theoremstyle{definition}
\newtheorem{definition}{Definition}
\newtheorem{remark}{Remark}
\newtheorem{claim}{Claim}

\def\ccm{Center for Computational Mathematics, Flatiron Institute, Simons Foundation,
  New York, New York 10010}

\def\njit{Department of Mathematical Sciences,
New Jersey Institute of Technology,
Newark, New Jersey 07102}

\def\nyu{Courant Institute of Mathematical Sciences,
  New York University, New York, New York 10012}

\def\papertitle{Periodic Fast Multipole Method}

\begin{document}

\begin{frontmatter}

\title{\papertitle}


\author{Ruqi Pei\fnref{njit}}
\address[njit]{\njit}
\ead{rp696@njit.edu}

\author{Travis Askham\fnref{njit}}
\ead{askham@njit.edu}

\author{Leslie Greengard\fnref{ccm,nyu}}
\address[ccm]{\ccm}
\address[nyu]{\nyu}
\ead{greengard@courant.nyu.edu}

\author{Shidong Jiang\fnref{ccm,njit}}
\ead{sjiang@flatironinstitute.org}

\begin{abstract}
A new scheme is presented for imposing 
periodic boundary conditions on unit cells with arbitrary 
source distributions. We restrict our attention here to 
the Poisson, modified Helmholtz, Stokes and modified Stokes
equations. The approach
extends to the oscillatory equations of mathematical physics,
including the Helmholtz and Maxwell equations, but we will 
address these in a companion paper, since the nature of the problem
is somewhat different and includes the consideration of 
quasiperiodic boundary conditions and resonances.
Unlike lattice sum-based methods, the scheme is insensitive to 
the unit cell's aspect ratio and is easily coupled to adaptive 
fast multipole methods (FMMs). Our analysis relies on classical
``plane-wave" representations of the fundamental solution, 
and yields an explicit low-rank representation of the field due to 
all image sources beyond the first layer of neighboring unit cells.
When the aspect ratio of the unit cell is large, our scheme
can be coupled with the nonuniform 
fast Fourier transform (NUFFT) to accelerate the evaluation of the 
induced field. Its performance is illustrated
with several numerial examples.
\end{abstract} 

\begin{keyword}
  periodic boundary conditions \sep
  fast multipole method \sep
  plane wave representation \sep
  nonuniform fast Fourier transform \sep
  low rank factorization \sep
  multipole expansion \sep
  Poisson equation \sep 
  modified Helmholtz equation \sep
  Stokes equations \sep
  modified Stokes equations
\end{keyword}

\end{frontmatter}
\section{Introduction}\label{intro}
Applications in electrostatics, magnetostatics, fluid mechanics, and 
elasticity often involve sources contained in a unit cell $\C$, centered
at the origin, on which are imposed periodic boundary conditions.
In two dimensions, 
such a unit cell is defined by two fundamental translation vectors
$\eh_1$ and $\eh_2$. 
In the doubly periodic setting, we assume (without loss of generalilty) 
that $\| \eh_1 \| \geq \| \eh_2 \|$ and that, by a suitable rotation,
$\eh_1$ is aligned with the $x$-axis and $\eh_2$ lies in the upper
half space (see \cref{fig1}).
That is, we let
$\C=\left\{x_1\eh_1+x_2\eh_2\in 
\mathbb{R}^2|\ x_1, x_2\in [-\half,\half]\right\}$,
where $\eh_1=\langle d,0 \rangle$,
$\eh_2=\langle \xi,\eta \rangle$, with
$d, \eta >0$ and $d \ge \sqrt{\xi^2+\eta^2}$. 
In the singly periodic setting, we assume that the periodic direction
is aligned with the $x$-axis, but can no longer assume that 
$\| \eh_1 \| \geq \| \eh_2 \|$. Without loss of generality, however, 
we can assume that the unit cell is rectangular (\cref{fig1}) and
of dimension $d \times \eta$.
Letting $\bt = (x,y)$ and letting $u(\bt)$ denote a scalar 
quantity of interest,
by {\em doubly periodic}
boundary conditions we mean that $u(\bt)$ must satisfy:
\be
\ba
u(\bt+\eh_1)&= u(\bt),\\
u(\bt+\eh_2)&= u(\bt).
\ea
\label{pbc}
\ee
By {\em singly periodic}
boundary conditions we mean that $u(\bt)$ must satisfy:
\be
\ba
u(\bt+\eh_1)&= u(\bt),\\
\ea
\label{pbc1}
\ee
and a standard outgoing/decay condition in the $y$-direction.

\begin{figure}[t]
\centering
\includegraphics[width=100mm]{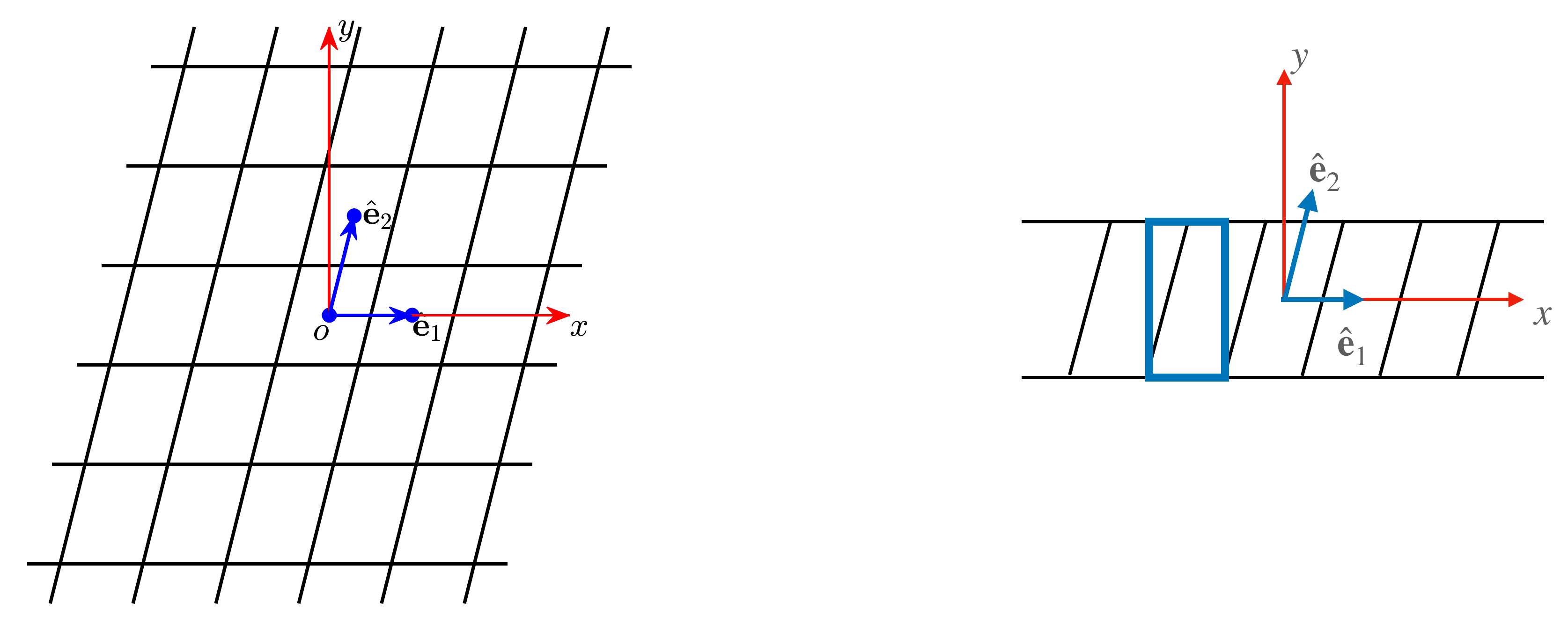}
\caption{
In the doubly periodic case (left), the unit cell is a parallelogram
which tiles the entire plane. By convention, we assume that the lattice is 
oriented so that the longer cell dimension is aligned with the $x$-axis:
$\| \eh_1 \| \geq \| \eh_2 \|$.
In the singly periodic case (right), we assume the periodic direction
is aligned with the $x$-axis. The unit cell may still be 
a parallelogram, but we can always define a corresponding 
rectangular unit cell, indicated by thick blue lines. 
In this case, we cannot assume 
that the long cell dimension is aligned with the $x$-axis.
Periodic boundary conditions are imposed through the method of images:
that is, 
by including the influence of the translated sources in every image cell on
the targets in the fundamental unit cell.}
\label{fig1}
\end{figure}

For the moment, let us assume that the
governing partial differential equation (PDE) is 
\begin{equation}
\Delta u(\bt) - \beta^2 u(\bt) = \sum_{j=1}^{N_S} q_j \delta(\bt-\bs_j),
\label{govpde}
\end{equation}
with $\beta$ real and non-negative.
Here, $\bt,\bs_j$ are points lying within the unit cell $\C$.
We refer to \eqref{govpde} as the modified Helmholtz equation when
$\beta > 0$.  When $\beta = 0$, of course, we obtain
the Poisson equation.
In two dimensions, the free-space Green's functions for these equations
are well-known and given by \cite{mikhlin,stakgold}
\[ 
G(\bt,\bs) = \frac{1}{2\pi} K_0(\beta\|\bt-\bs\|), \quad
G(\bt,\bs) = \frac{1}{2\pi} \log(1/\|\bt-\bs\|),
\]
respectively, where $K_0$ denotes the zeroth order modified Bessel function 
of the second kind \cite{nisthandbook}.

Thus, in free space, the solution to \eqref{govpde} at targets
$\bt_1,\dots,\bt_{N_T}$ is given by
\be\label{fsum}
u^{(f)}(\bt_i) =\sum_{j=1}^{N_S} G(\bt_i, \bs_j) \, q_j,
\qquad i=1,\ldots, N_T.
\ee
where $G(\bt,\bs)$ is the relevant free-space Green's function.
It is well-known that algorithms such as 
the fast multipole method (FMM)
\cite{cheng1999jcp,greengard1987jcp,greengard1997actanum,ying2004jcp} 
reduce the computational cost of evaluating \eqref{fsum} 
from $O(N_S\cdot N_T)$ to $O(N_S+N_T)$, with
the prefactor depending logarithmically on the desired precision. 

Since the problem at hand is classical, there are many approaches now
available for imposing periodicity. Three common approaches are:
direct discretization of the governing PDE including boundary conditions
to yield a large sparse linear system of equations, spectral methods which 
solve \eqref{govpde} using Fourier analysis, and the method of images,
based on tiling the plane with copies of the unit cell and computing the 
formal solution:
\be\label{psum}
u(\bt_i)
=\sum_{j=1}^{N_S} K^{(p)}(\bt_i, \bs_j) q_j \, ,
\ee
where
\[
K^{(p)}(\bt, \bs) 
= \sum_{(m,n)\in \mathbb{Z}^2} 
G(\bt,\bs+\bl_{mn}) 
\]
denotes the periodic Green's function.
Here, $\mathbb{Z}^2=\{(m,n)|m,n\in \mathbb{Z}\}$ is the set of 
integer lattice points in the plane
and $\bl_{mn}=m\eh_1+n\eh_2$. It is straightforward
to verify that this formal solution satisfies the PDE and the 
boundary conditions.
For the modified Helmholtz equation, the series defining
$K^{(p)}(\bt, \bs)$ is convergent and 
requires no further discussion. 
For the Poisson equation, the series is conditionally convergent but
straightforward to interpret if the net charge $\sum_{j=1}^{N_S} q_j = 0$.

Without entering into a detailed review of the literature, 
we note that the spectral approach is standard in 
solid-state physics and quantum mechanics and attributed to 
Ewald \cite{ewald} and Bloch \cite{bloch} (with earlier 
work in the mathematics literature by Floquet, Hill and others).
We focus here on the method of images, using \eqref{psum},
which is more common in acoustics, electromagnetics,
and fluid dynamics and dates back to Lord Rayleigh 
\cite{rayleigh}.

\begin{definition}
In the doubly periodic case,
we decompose the two-dimensional integer lattice $\mathbb{Z}^2$ 
into 
$\Lambda_{near} =\{(m,n)| m \in \{-m_0,\ldots,m_0 \} ,n \in \{ -1,0, 1 \} \}$ 
and 
$\Lambda_{far} = \mathbb{Z}^2-\Lambda_{near}$.
$m_0=1$ is sufficient for rectangular unit cells. In order to
allow for parallelograms with arbitrarily small angles 
($\xi \gg \eta$), it is sufficient to set $m_0=3$. 
The region covered by the unit cell $\C$ and its 
nearest images, indexed by $\Lambda_{near}$, will be referred
to as the near field and denoted by $\N$.
The region covered by the remaining image cells,
indexed by $\Lambda_{far}$, will be referred
to as the far field and denoted by $\F$. For consistency in notation,
in the singly periodic case, we define
$\Lambda^{(1)}_{near} =\{ (m,0) | m \in \{-1,0, 1 \} \}$ 
and 
$\Lambda^{(1)}_{far} = \{ (m,0) | m \in \mathbb{Z} \} - \Lambda^{(1)}_{near}$.
\end{definition}

\begin{definition}  \label{aspectdef}
In the two-dimensional case, we define the
aspect ratio of the fundamental unit cell by
$A=d/\eta$. Since we have chosen to orient the longer 
lattice vector $\eh_1$ with the $x$-axis, $A \geq 1$.
The problem is computationally more involved when $A$ is large.
In the one-dimensional case, we define the
aspect ratio by $A=\max(1,\eta/d)$
As we shall see below,
it is again when $A$ is large that the computation 
is most difficult.
(See \cref{fig1}.)
\end{definition}

It is useful to express 
the singly or doubly periodic Green's function in the form:
\[
 K^{(p)}(\bt,\bs) 
= K^{near}(\bt,\bs) \ + \  K^{far}(\bt,\bs),
\]
where
\be
\ba 
\label{nearfarsplit}
K^{near}(\bt,\bs)
&=
\sum_{(m,n) \in \Lambda_{near} }
G(\bt,\bs+\bl_{mn}) \\
K^{far}(\bt,\bs) &=
\sum_{(m,n) \in \Lambda_{far} }
G(\bt,\bs+\bl_{mn}) 
\ea
\ee
Because the sources in 
$K^{far}(\bt,\bs)$ are distant,
it is possible to express their contributions within the
unit cell as a series
\be
\label{latsum1}
K^{far}(\bt,\bs) =
\sum_{l=-\infty}^{\infty} S_l I_l(\lambda \| \bt-\bs \|) 
e^{il\theta_{\bt,\bs}}
\ee
with $\theta_{\bt,\bs} = \arg(\bt-\bs)$,
where 
$I_l$ denotes the modified Bessel function of the first kind
\cite{nisthandbook} and $S_l$ denotes the {\em lattice sum}
\be
\label{latsumdef}
 S_l = \sum_{\substack{ (m,n)\in \Lambda_{far}}} 
K_l(\lambda|\bl_{mn}|) e^{il\phi_{mn}},
\ee
with $\phi_{nm} = \arg(\bl_{mn})$. 
(This is a straightforward application of the Graf addition 
theorem \cite[\S 10.23]{nisthandbook}.)
The reason for omitting the nearest image cells from 
$K^{far}(\bt,\bs)$
is that the convergence behavior of the series expansion  
in \eqref{latsum1}
is controlled by the distance of the nearest source 
from the disk centered at the origin and enclosing the unit cell
(see \cref{fig2}).
The more images included in the near field, the faster the convergence
rate of the local expansion.

When the unit cell is square or has an aspect ratio near to
one, this yields an optimal scheme and is widely used in 
periodic versions of the fast multipole method 
\cite{berman1994jmp,greengard1987jcp,biros2015cicp}. 
Of particular note is 
\cite{yan2018} which extends a three-dimensional version of the 
kernel-independent FMM library \cite{biros2015cicp} to permit 
the imposition of periodicity on the unit cube
in one, two or three directions. 
(See
\cite{berman1994jmp,denlinger2017jmp,dienstfrey2001prsl,enoch2001,huang1999jmp,linton2010,mcphedran2000jmp,moroz2006,otani2008} 
for further discussion and references,
largely in the context of the Poisson, Helmholtz and Maxwell equations.)
Unfortunately, lattice sum-based approaches are less efficient
when the unit cell has high aspect ratio, as
illustrated for a doubly periodic problem in \cref{fig2}. 
The difficulty is that every source assigned to the far field must
be in the exterior of the smallest disk enclosing the unit cell
in order to ensure convergence of the local expansion.
This may require redefining $\Lambda_{far}$ to exclude a large
number of image cells, redefining $\Lambda_{near}$ to include
those image cells, and a major
modification of the underlying fast algorithm.

\begin{remark}
In the FMM, 
lattice sums are not used for the evaluation of 
$K^{far}(\bt,\bs)$ for each source and target. Instead, given
a multipole expansion for the unit cell, one constructs a single
local expansion of the form
\[ \sum_{l=-\infty}^{\infty} \alpha_l I_l(\lambda \| \bt \|) 
e^{il\theta_{\bt}}
\]
that captures the field due to all sources in the far 
field $\F$ within the unit cell.
This is a slight
modification of Rayleigh's original method \cite{rayleigh}.
The coefficients $\alpha_l$ are determined from the multipole coefficients
through a formula which involves the lattice sums $S_l$
(see above references).
\end{remark}

\begin{figure}[t]
\centering
\includegraphics[width=120mm]{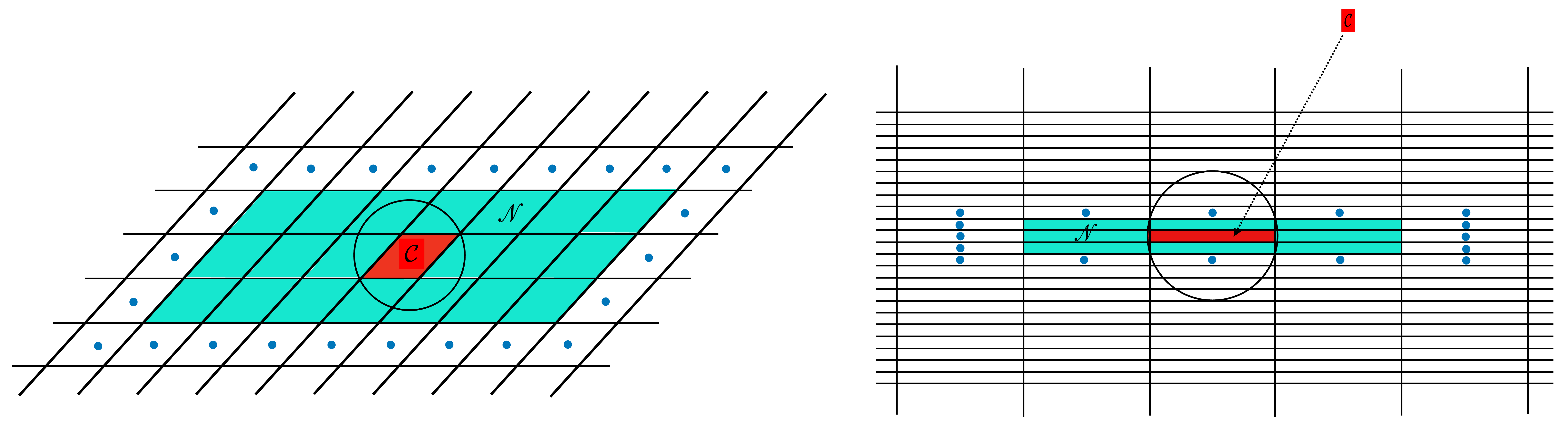}
\caption{Two fundamental unit cells $\C$ in the doubly periodic case. 
On the left, the indicated $7 \times 3$ grid of neighbors define the 
near region $\N$
when the parameter $m_0 = 3$. On the right, when $m_0=1$, the near region 
corresponds to the $3 \times 3$ grid of neighbors, which is sufficient
for rectangular lattices.
We also plot the centers of the nearest image cells outside $\N$.
Note that if
the field due to distant images is represented in the unit cell by a
Taylor series, the convergence behavior is controlled by the distance
from the smallest disk covering the unit
cell $\C$ to the nearest such image, which must lie {\em outside} the 
disk. For the geometry on the left, all image sources 
outside $\N$ satisfy this
constraint and the Taylor series converges. For high aspect ratio cells,
illustrated on the right, several of the images lie within the disk, and
a region much larger than $\N$ must be excluded for the corresponding 
Taylor series to be convergent.
}
\label{fig2}
\end{figure}

Recently, two new approaches were developed that carry out 
a free space calculation of the form 
\eqref{fsum} over sources in $\Lambda_{near}$ and correct for the 
lack of periodicity using 
an integral representation 
\cite{barnett2010jcp,barnett2011bit} 
or a representation in terms of discrete 
auxiliary Green's
functions \cite{barnett2018cpam,liu2016jcp,wang2021}.
Both of these approaches are effective even for high aspect ratio
unit cells, but require the solution of a possibly ill-conditioned 
linear system of equations in the correction step.

In this paper, we develop a new scheme to treat 
periodic boundary conditions based on
an explicit, low-rank representation 
for the influence of all distant sources in the far field
(those in image cells indexed by $\Lambda_{far}$). It
avoids the lattice sum/Taylor series formalism
altogether and is insensitive to the aspect ratio of the
unit cell.  
It was motivated by, and makes use of, the fast algorithms
for lattice sums and elliptic functions developed in 
\cite{dienstfrey2001prsl,enoch2001,huang1999jmp,mcphedran2000jmp} 
and the fast translation operators
used in modern versions of the FMM 
\cite{cheng2006jcp,greengard1997actanum,hrycak1998}.

\begin{figure}[t]
\centering
\includegraphics[width=100mm]{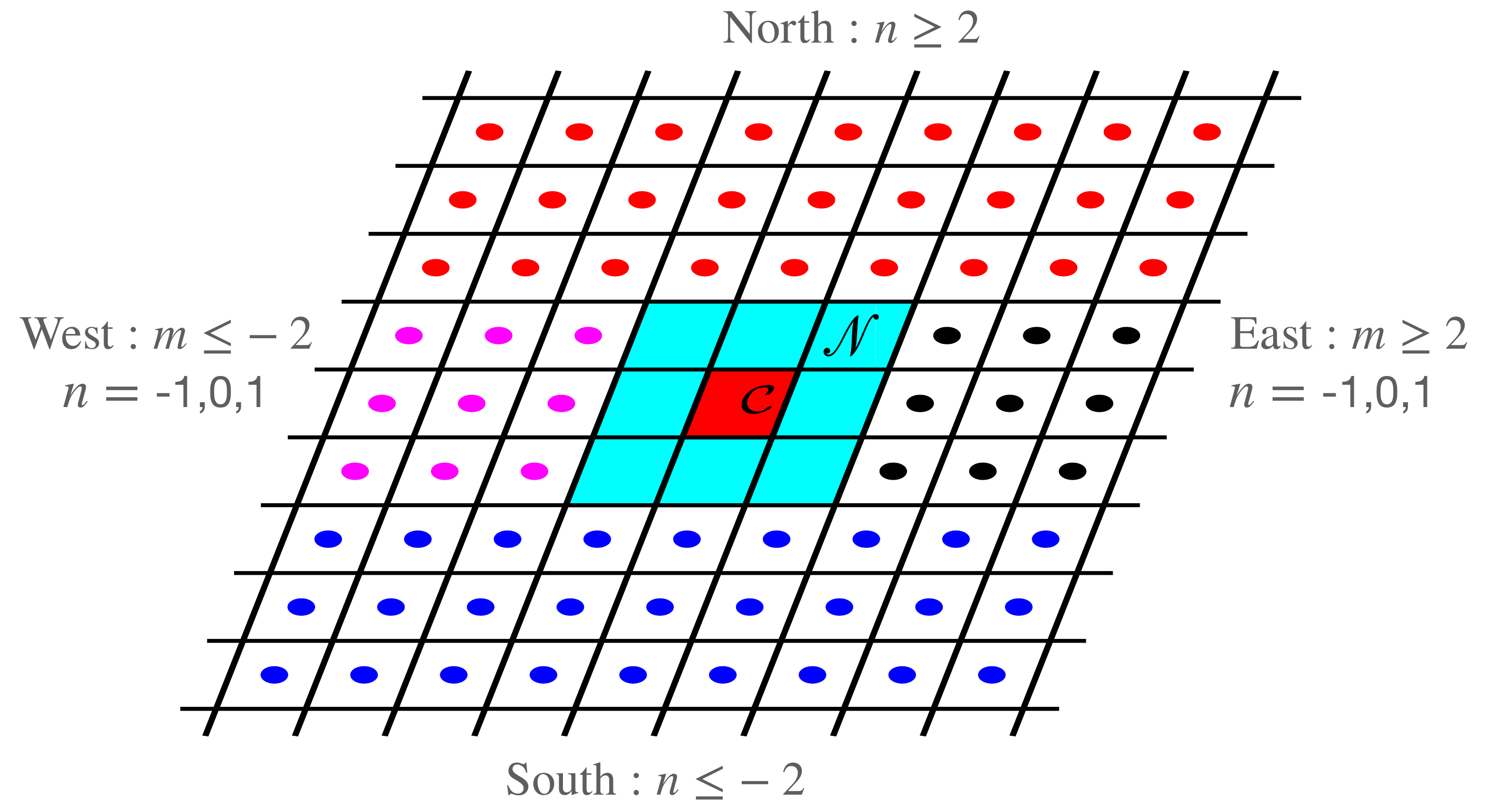}
\caption{The tiling of the plane in the far region (for doubly
periodic problems with $m_0=1$) can be decomposed
into four parts. The fundamental unit cell 
is indicated by ${\cal C}$ and the near field by 
${\cal N}$. All other copies of the unit cell lie to the
``south" (blue, with $n \leq -2$), the 
``north" (red, with $n \geq 2$), the 
``west" (magenta, with $n = -1,0,1$ and $m < -1$) or the
``east" (black, with $n = -1,0,1$ and $m > 1$).
}
\label{fig3}
\end{figure}

The essence of the approach is
easily illustrated in the doubly periodic setting 
(\cref{fig3}), where the tiling of the far field is divided
into four subregions. 

\begin{remark}
To fix notation, we will denote by $\bt = (x,y)$
or $\bt_l = (x_l,y_l)$ the coordinates of a target point where we
seek to evaluate the field.
We will denote by $\bs = (x',y')$
or $\bs_j = (x'_j,y'_j)$ the coordinates of a source point.
\end{remark}

\begin{claim}
Consider the field induced by all sources lying in image cells 
with centers lying to the ``south":
$\{(m,n)\in \mathbb{Z}^2| m \in \mathbb{Z},\ n\leq -2\}$
(\cref{fig3}).
Then, for any target $\bt \in \C$,
\be\label{lowranksouth}
\ba
u(\bt)
&=\sum_{j=1}^{N_S}\left(\sum_{m=-\infty}^\infty \sum_{n=-\infty}^{-2}
G(\bt,\bs_j+\bl_{m,n}) \right)q_j \\
&= 
\sum_{m=-\infty}^\infty 
c_m\ 
e^{-\sqrt{(2\pi m/d)^2 + \beta^2}y} e^{2 \pi i m x/d} \, ,
\ea
\ee
where 
\[ c_m =  \frac{\pi}{d} \left(
\frac{
e^{-2 \sqrt{( 2 \pi m/d)^2 + \beta^2} \eta}
e^{-  4\pi i m \xi/d}}
{1 - e^{-\sqrt{( 2 \pi m/d)^2 + \beta^2} \eta}
e^{-2\pi i m \xi/d}} \right)
 \, 
\sum_{j=1}^{N_S} q_j \frac{e^{\sqrt{(2\pi m/d)^2 + \beta^2}y'_j} \,
e^{-2 \pi i m x'_j/d}}{\sqrt{( 2 \pi m/d)^2 + \beta^2}}
 \, .
\]
\end{claim}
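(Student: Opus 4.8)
The plan is to obtain \eqref{lowranksouth} from the ``plane-wave'' (Sommerfeld) representation of the free-space Green's function, Poisson summation in the periodic direction, and an elementary geometric series in the stacking direction.

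Fix a target $\bt=(x,y)\in\C$ and a source $\bs_j=(x'_j,y'_j)\in\C$. The image cell $(m,n)$ has centre $\bl_{m,n}=\langle md+n\xi,\,n\eta\rangle$, so the cell contribution is $G(\bt,\bs_j+\bl_{m,n})$, whose relative coordinates are $(x-x'_j-md-n\xi,\;y-y'_j-n\eta)$. For $n\le-2$ and $\bt,\bs_j\in\C$ one has $y-y'_j-n\eta\ge\eta>0$, i.e.\ every ``south'' image lies strictly below $\C$ and the vertical gap never changes sign; this is exactly the separation lost if the rows $n=-1,0,1$ were included, and it is what makes the representation both exact and geometrically convergent. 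Into each term I would insert
\[
\frac{1}{2\pi}K_0(\beta\|\bt-\bs\|)=\frac{1}{4\pi}\int_{-\infty}^{\infty}
\frac{e^{ik(x-x')}}{\sqrt{k^2+\beta^2}}\;e^{-\sqrt{k^2+\beta^2}\,|y-y'|}\,dk ,
\]
valid here with $|y-y'|=y-y'_j-n\eta$ (the Poisson case $\beta=0$ is the regularized version of this identity, with the $k=0$ mode set aside). Summing over $m\in\mathbb{Z}$ and using $\sum_{m}e^{-ikmd}=\tfrac{2\pi}{d}\sum_{p}\delta(k-2\pi p/d)$ collapses the $k$-integral onto the Fourier modes $k=2\pi m/d$ and produces the closed-form $x$-periodic Green's function $\tfrac{1}{2d}\sum_{m}\gamma_m^{-1}e^{2\pi i m(x-x')/d}e^{-\gamma_m|y-y'|}$, where $\gamma_m:=\sqrt{(2\pi m/d)^2+\beta^2}$.

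Next I would sum this Fourier series over $n\le-2$ against the source weights $q_j$. The only $n$-dependence is a factor of the form $r_m^{\,n}$ with $|r_m|=e^{\gamma_m\eta}>1$ for $m\ne0$, so $\sum_{n\le-2}r_m^{\,n}=r_m^{-2}/(1-r_m^{-1})$ is a convergent geometric series; this is precisely the rational factor appearing in $c_m$. Reordering the (finite) sum over $j$ and the sums over $n$ and $m$ is justified because, for $\bt\in\C$, the $(m,n)$-term is bounded by $C\,\gamma_m^{-1}e^{-\gamma_m\eta(|n|-1)}$ (using $y\ge-\eta/2$, $y'_j\le\eta/2$), which is summable; the same estimate shows the final series in $m$ converges geometrically, so $u(\bt)$ has the asserted low-rank form---each $c_m$ splits as a geometry-only factor times a single weighted sum over the sources. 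Collecting the $m$-, $x$-, $x'_j$-, $y$- and $y'_j$-dependent pieces then reads off $c_m$. In the Poisson case $\beta=0$ the mode $m=0$ must be handled separately: the ``south'' sum's $m=0$ contribution is only conditionally convergent (the $x$-average of the planar Green's function grows linearly in the vertical offset), but under the neutrality hypothesis $\sum_j q_j=0$ its divergent part cancels, consistently with the interpretation of $K^{(p)}$ recalled above.

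I expect the main obstacle to be making the two interchanges rigorous---pulling the sum over $m$ (and then over $n$) through the improper $k$-integral, i.e.\ the Poisson-summation step. A clean way to package this is modular: (i) prove the closed-form Fourier series for the $x$-periodized Green's function as a standalone identity (classical; for $\beta>0$ it follows from Poisson summation or a residue computation, and for $\beta=0$ it is the $\log|2\sin(\pi(x+iy)/d)|$ formula); (ii) shift the source by $n\eh_2$ and sum the manifestly convergent geometric series term by term in the Fourier variable; (iii) invoke uniform convergence of the Fourier series on $\C$, which is disjoint from every south image, to legitimize the term-by-term summation. Steps (ii)--(iii) are elementary, so all of the analytic subtlety---including the conditional convergence at $\beta=0$---is then localized in step (i).
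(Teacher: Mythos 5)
Your plan is exactly the paper's own derivation (postponed to the proof of \cref{south_thm}): insert the Sommerfeld plane-wave representation \eqref{k0planewave} for $K_0$, collapse the $m$-sum onto the discrete modes $\alpha_m=2\pi m/d$ via the Dirac-comb form of Poisson summation \eqref{deltarep}, and then sum the geometric series in $n\le-2$ to produce the factor $e^{-2Q_m}/(1-e^{-Q_m})$, with the $\beta\to 0$, $m=0$ mode deferred to the charge-neutral Poisson treatment. The added remarks on justifying the interchanges and on uniform convergence away from the south images are sound and only strengthen the argument.
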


We postpone the derivation of this formula to 
\cref{sec:yukawaterms}, but let us briefly examine its consequences.
First, the behavior of the 
series is not controlled by a radius of convergence, 
as it is for methods based on
lattice sums and Taylor series. 
Second, the series converges exponentially fast. Since $|y|,|y_j| \leq 
\frac{\eta}{2}$, it is easy to see that the $m$th term of the series decays
faster than $e^{-2 \pi m \eta/d}$. 
Clearly, 
approximately $6 A$ terms yields 
double precision accuracy, where $A=d/\eta$ is the aspect ratio of the unit cell.
Computing the moments $c_m$ requires 
$O(N_S \, A)$ work. Subsequent evaluation at
$N_T$ target points again requires $O(N_T \, A)$ work.
In short, this is an efficient low-rank, separable 
representation of the potential due to a subset of the image sources, 
the rank of which 
grows at most linearly with $A$.
When $A$ is sufficiently large, 
we will show how to use the non-uniform FFT 
(NUFFT)~\cite{finufftlib,nufft2,nufft3,nufft6,nufft7}
to obtain an algorithm whose cost is of the order
$O(\log(1/\epsilon)(A\log A +(N_S+N_T)\log(1/\epsilon)))$. 

Below, we complete and generalize the representation 
\eqref{lowranksouth}, permitting the imposition of
periodic boundary conditions in one or two directions
for a variety of non-oscillatory PDEs in the plane.
We will denote by
\be
\label{k2def}
K_2^{far}(\bt,\bs)=\sum_{(m,n)\in \ltfar} G(\bt,\bs+\bl_{mn})
\ee
the far-field kernel for doubly periodic problems.
In the singly periodic case, 
we denote the corresponding far-field kernel by
\be
\label{k1def}
K_1^{far}(\bt,\bs)=\sum_{(m,0)\in \ltfar^{(1)}} G(\bt,\bs+\bl_{m0}) \; .
\ee

\begin{definition} \label{peropdef}
Let 
$\bS = \{ \bs_j \, | \, j = 1,\dots,N_S \}$ and 
$\bT = \{ \bt_l \, | \, l = 1,\dots,N_T \}$ 
denote collections of sources and targets, respectively,
in the unit cell $\C$ and let 
$\bq = (q_1,\dots,q_{N_S})$ denote a vector of
``charge" strengths.
With a slight abuse of notation, 
we define the $N_T \times N_S$ {\em periodizing operators}
$\P_1 = \P_1^\C(\bT,\bS)$ and $\P_2 = \P_2^\C(\bT,\bS)$ by 
\[ \P_1(l,j) = K_1^{far}(\bt_l,\bs_j) 
\]
and
\[ \P_2(l,j) = K_2^{far}(\bt_l,\bs_j) ,
\]
where 
the far field kernels $ K_1^{far}$ and $ K_2^{far}$
are given by \eqref{k2def} and \eqref{k1def}.
The vectors 
\[ \P_1 \bq, \quad \P_2 \bq 
\]
will be referred to as the {\em periodizing potentials}.
We will omit the superscript and depedence on source and target
locations when it is clear from context.
We also assume that the governing PDE is clear from context.
\end{definition}

It is worth noting that our
method yields an explicit, low-rank representation of the 
periodic Green's function, without the need to solve any
auxillary linear systems. 
In fact, the periodizing operators $\P_2,\P_1$ 
admit plane-wave factorizations of rank $O(A)$ similar to the
formula for the ``south'' images described above,
leading to simple fast algorithms for their evaluation.
More precisely, letting $r = O(A)$ be the 
numerical rank of $\P_2$ or $\P_1$ to precision $\eps$,
a simple, direct method requires
$O(r(N_S+N_T))$ work. When $r$
is large, a more elaborate
algorithm using the NUFFT requires only
$O(\log(1/\epsilon)(r\log r+(N_S+N_T)\log(1/\epsilon)))$ work. 

In \cref{sec:analysis}, we review the mathematical and computational 
foundations of the 
method. In \cref{sec:yukawaterms}, we discuss the modified Helmholtz
case in detail. In 
\cref{sec:lap}, we discuss the Poisson equation, where charge
neutrality is a necessary constraint.
The Stokes and modified Stokes problems are considered in 
\cref{sec:stokes}.
In \cref{sec:fastalg}, we describe the full scheme including
NUFFT acceleration. We illustrate the performance of the 
method in \cref{sec:results} with several
numerical examples and describe future
extensions of the method in \cref{sec:conclusions}.
An extension of our representation for multipole sources is provided in
\ref{sec:yukawamultipole} and \ref{sec:laplacemultipole}.
\section{Mathematical preliminaries}  \label{sec:analysis}

In this section, we summarize the main mathematical tools used in
deriving our low-rank representation. These include the Poisson
summation formula, Sommerfeld integral representations for free-space
Green's functions, the nonuniform FFT, and high order accurate 
quadrature schemes.

\subsection{The Poisson summation formula}
Let $f(x): \mathbb{R} \rightarrow \mathbb{C}$ be a function defined
on the real line which has a well-defined Fourier transform
\be
\hat{f}(k) = \int_{\mathbb{R}}e^{-ikx}f(x)dx ,
\ee
for which the Fourier inversion theorem holds. That is,
\[
 f(x) = \frac{1}{2\pi}\int_{\mathbb{R}}e^{ikx}\hat{f}(k)dk.
\]
The Poisson summation formula (see, for example, \cite{mckean}) 
then states that
\be
\sum_{n=-\infty}^\infty f\left(x+\frac{2\pi n}{h}\right) = \frac{h}{2\pi}\sum_{m=-\infty}^\infty \hat{f}(mh)e^{imhx}.
\ee
This holds for a broad class of functions, and extends to distributions
such as the Dirac delta function.
In the latter case, we have~\cite{jones}
\be\label{deltarep}
\sum_{n=-\infty}^\infty \delta\left(x+\frac{2\pi n}{h}\right) =
\frac{h}{2\pi}\sum_{m=-\infty}^\infty e^{imhx}.
\ee
\subsection{Plane wave representations for the modified Helmholtz equation}
The Green's function for the modified Bessel function,
$K_0(\beta \sqrt{x^2+y^2})$,
and its higher order multipole terms
have the well-known
Sommerfeld integral representation~\cite{morse,mcphedran2000jmp,nisthandbook}:
\be
\label{k0planewave}
K_l\left(\beta\sqrt{x^2+y^2}\right) e^{i l \phi}
=\left\{\begin{aligned}
&\frac{i^l}{2 \beta^l} \int_{-\infty}^\infty
  \left(\sqrt{\lambda^2+\beta^2}-\lambda\right)^l
  \frac{e^{-\sqrt{\lambda^2+\beta^2}y}}{\sqrt{\lambda^2+\beta^2}}
  e^{i\lambda x} d\lambda, \quad y>0,\\
&\frac{(-i)^l}{2 \beta^l} \int_{-\infty}^\infty
  \left(\sqrt{\lambda^2+\beta^2}+\lambda\right)^l
  \frac{e^{\sqrt{\lambda^2+\beta^2}y}}{\sqrt{\lambda^2+\beta^2}}
  e^{i\lambda x} d\lambda, \quad y<0, \\
&\frac{1}{2 \beta^l} \int_{-\infty}^\infty
  \left(\sqrt{\lambda^2+\beta^2}+\lambda\right)^l
  \frac{e^{-\sqrt{\lambda^2+\beta^2}x}}{\sqrt{\lambda^2+\beta^2}}
  e^{i\lambda y} d\lambda, \quad x>0,\\
&\frac{(-1)^l}{2 \beta^l} \int_{-\infty}^\infty
  \left(\sqrt{\lambda^2+\beta^2}-\lambda\right)^l
  \frac{e^{\sqrt{\lambda^2+\beta^2}x}}{\sqrt{\lambda^2+\beta^2}}
  e^{i\lambda y} d\lambda, \quad x<0,
\end{aligned}
\right.
\ee
for $l \geq 0$.

\subsubsection{Plane wave representations for the Poisson equation}

The plane-wave expansion of the Green's function for the 
Laplacian is typically invoked for the complex analytic
function 
\[ \frac{1}{z} =  \frac{1}{x+iy} = 
\frac{x-iy}{x^2 + y^2} =
 - \left(\frac{\partial}{\partial x}  
 - i \, \frac{\partial}{\partial y} \right) \log(1/\sqrt{x^2+y^2}) \, ,
 \]
rather than $\log(1/\sqrt{x^2+y^2})$ itself. 
This is sufficient for our 
purposes, where we assume the collection of sources in the unit cell
$\C$ satisfies charge neutrality:
\be\label{chargeneutral}
\sum_{j=1}^{N_S} q_j=0.
\ee

In \ref{sec:laplacemultipole}, we will consider multipole sources as well
as charge distributions, and will make use of the representations:
\be\label{laplacepw}
\frac{1}{z^l}=
\left\{\begin{aligned}
&\frac{1}{(l-1)!} \, \int_0^\infty \lambda^{l-1} e^{-\lambda z} d\lambda, \quad x>0,\\
&\frac{(-1)^l}{(l-1)!}\, \int_0^\infty \lambda^{l-1} e^{\lambda z} d\lambda, \quad x<0,\\
&\frac{(-i)^l}{(l-1)!}\, \int_0^\infty \lambda^{l-1} e^{i\lambda z} d\lambda, \quad y>0,\\
&\frac{i^l}{(l-1)!}\, \int_0^\infty \lambda^{l-1} e^{-i\lambda z} d\lambda, \quad y<0,
\end{aligned}
\right.
\ee
for $l \geq 1$.
These representations are useful in
developing diagonal translation operators for 
FMMs \cite{cheng2006jcp,hrycak1998} as well as for the computation of harmonic
lattice sums and elliptic functions
\cite{huang1999jmp}.
Note that the integrals in \cref{laplacepw} are consistent with
\eqref{k0planewave}. In the case $l=1$, for example, 
this requires taking the limit $\beta\rightarrow 0$ and using
the formula~\cite[\S10.30.2]{nisthandbook}:
\be\label{k1limitform}
\lim_{\beta \rightarrow 0} \beta K_1(\beta r)e^{-i\theta}=\frac{1}{z}, \quad z=re^{i\theta}.
\ee

\subsubsection{Generalized Gaussian quadrature for the doubly periodic case}
\label{sec:ggq}

In evaluating the integrals in
\eqref{k0planewave} or \eqref{laplacepw}, we will require suitable
quadrature rules. 
More generally, we would like efficient rules 
of the form
\begin{eqnarray*}
& & \int_{0}^\infty 
\frac{e^{-\sqrt{\lambda^2+\beta^2}x}}{\sqrt{\lambda^2 + \beta^2}}
[ M_1(\lambda) e^{i\lambda y} + {M_1(-\lambda)} 
e^{-i\lambda y} ] \, d\lambda \\
& & \approx 
 \sum_{k=1}^{N} \frac{e^{-\sqrt{\lambda_k^2+\beta^2}x}}{\sqrt{\lambda_k^2 + \beta^2}}
[ M_1(\lambda_k) e^{i\lambda_k y} + M_1(-\lambda_k) e^{-i\lambda_k y} ] \, w_k
\end{eqnarray*}
and
\[
\int_0^\infty  e^{-\lambda z}  M(\lambda) d\lambda 
\approx 
\sum_{k=1}^{N} e^{-\lambda_k z }
M(\lambda_k)  w_k
\]
for $(x,y)$ in a bounded domain of $\mathbb{R}^2$.
The functions $M_1(\lambda)$ and $M(\lambda)$ here are smooth functions
of $\lambda$ that depend on the source locations and strengths
and are derived from the infinite series that appear
in the periodizing operators. 
The remaining cases in 
\eqref{k0planewave} or \eqref{laplacepw} are
treated using the same nodes and weights.
Because we have separated the near and far fields, we will
be using these rules under restrictive conditions on $x,y$. 
As we shall see below in more detail, for the doubly periodic case
we will typically invoke the quadrature
under suitable rescaling so that 
$x \in [1,7]$ and $y \in [-2,2]$.
Finding optimal weights and nodes for this restricted range of arguments
leads to a nonlinear optimization problem which can be solved by what is 
known as generalized Gaussian quadrature \cite{ggq1,ggq2,ggq3}.

For the modified Hemholtz
equation, if $\beta$ is bounded away from zero, the integral converges and the 
number of nodes depends rather weakly on $\beta$ itself.
We note, however, that
in the limit $\beta=0$, the modified Helmholtz
integral (for $x>0$) becomes 
\[ 
\int_{0}^{\infty}\frac{e^{-\lambda x}}{\lambda}
[ M_1(\lambda) e^{i\lambda y} + {M_1(-\lambda)} 
e^{-i\lambda y} ] d\lambda, \quad x>0,
\]
with $M_1(\lambda) = O(1/\lambda)$.
Thus, significant adjustments would be required 
as $\beta \rightarrow 0$ to handle the near hypersingularity at the origin.
In the present context, where we seek to impose periodicity, 
charge neutrality is a natural condition (see \cref{sec:lap}).

For the modified Helmholtz equation, special purpose 
quadratures have been constructed for
$\beta$ in different ranges. The number of quadrature nodes decreases as $\beta$
increases. For $\beta \in [10^{-6}, +\infty)$, $x \in [1,7]$, $y \in [-2,2]$, 
at most 21 nodes have been found to yield six digits of accuracy 
and at most 41 nodes have been found to 
yield twelve digits of accuracy. When $\beta > 22$, 
only 1 node is sufficient for six digits of accuracy. When $\beta >37$,
3 nodes are sufficient for twelve digits of accuracy.
For the Poisson equation, with $x \in [1,7]$, $y \in [-2,2]$, 
18 nodes yield six digits of accuracy and 29 nodes yield twelve digits of accuracy. 
We omit consideration of the 
modified Helmholtz equation when 
$\beta < 10^{-6}$ but charge neutrality is not satisfied, as this is a highly
ill-conditioned problem. 
Assuming charge neutrality, one may simply use the quadrauture
designed for the Poisson equation.

\subsection{The non-uniform fast Fourier transform}

For high aspect ratio unit cells, we will require 
the evaluation of
discrete Fourier transforms where the nodes,
frequencies, or both are not uniformly spaced. By
combining the standard fast Fourier transform (FFT)
with careful analysis and fast interpolation
techniques, these sums can be computed with nearly optimal
computational complexity. The resulting algorithms are
known as non-uniform fast Fourier transforms (NUFFTs).
They were originally described in~\cite{nufft2,nufft3}.
We refer the reader to
\cite{finufftlib,finufft} for recent references and a state-of-the-art
implementation.

The type-I NUFFT evaluates sums of the form
\be
\label{eq:nufft1}
f_k = 
\sum_{j=1}^N c_j e^{ikx_j} \; , \textrm{ for } k = -M,\ldots, M.
\ee
Letting $\f = (f_{-M},\dots,f_{M-1},f_M)$ and
letting $\bc = (c_1,\dots,c_N)$, we will write
\[ \f  = \NUFFTone \bc. \]
When the explicit dependence on the
point locations $\{ x_j \}$ and the number of Fourier modes $M$ 
are needed,
we will denote the operator $\NUFFTone$ by
$\NUFFTone(\{x_j\};M)$.
The operator
$\NUFFTone$ can be applied using $O(M\log M+N\log(1/\epsilon))$
operations with nearly the same performance as the standard FFT.

Given the vector $\f$, 
The type-II NUFFT evaluates sums of the form
\be
\label{eq:nufft2}
v_j = 
\sum_{k=-M}^M f_k e^{-ix_jk} \; , \textrm{ for } j = 1,\ldots,
N \; ,
\ee
corresponding to the adjoint of $\NUFFTone$:
\[ \bv = \NUFFTtwo \f , \]
with the same computational complexity,
where $\bv = (v_1,\dots,v_N)$.

\subsection{Legendre polynomials and barycentric interpolation}
\label{sec:interpolation}
The standard Legendre polynomials can be defined by
setting $P_0 \equiv 1$ and $P_1(t) = t$,  with higher
degree polynomials defined by the recurrence formula

\begin{equation}
  \nonumber
  (l+1) P_{l+1} (t) = (2l+1)tP_l(t) - l P_{l-1}(t) \; .
\end{equation}
Let $-1<t_1<\cdots < t_M < 1$ be the roots
of $P_M$, known as the Legendre nodes of order $M$.

Letting $f$ be a function defined on $[-1,1]$, the
degree $M-1$ polynomial, $p_M[f]$, which interpolates $f$
at the Legendre nodes of order $M$, can be written in the form
\begin{equation}
  p_M[f](t) = \frac{\sum_{i=1}^M \frac{\sigma_i}{t-t_i}f(t_i)}
  {\sum_{i=1}^M \frac{\sigma_i}{t-t_i}} \; ,
\end{equation}
where 
\begin{equation}
  \label{eq:baryweights}
  \sigma_i = \frac{1}{\prod_{j\ne i} \left (t_i-t_j \right )} \; .
\end{equation}
This is known as the second form of the barycentric
formula for the interpolant. 

As observed in \cite{wang2012convergence}, if
$f$ is analytic in the Bernstein ellipse with foci
at $\pm 1$ and semi-major and semi-minor lengths adding up
to $\rho > 1$, then

\begin{equation}
  \label{eq:interpbound}
  \|f-p_M[f]\|_\infty \leq (1+\Lambda_M) \frac{2C}{\rho^M(\rho-1)}
  \; ,
\end{equation}
where $\|\cdot \|_\infty$ is the maximum norm on $[-1,1]$,
$C$ is a constant so that $|f| \leq C$ on the Bernstein ellipse,
and $\Lambda_M = O(\sqrt{M})$ is the Lebesgue constant
for the nodes. Thus, the interpolant is a spectrally accurate
approximation of $f$. See \cite{fornberg1998practical,hesthaven2007spectral,trefethen2008gauss,wang2012convergence}
for further details.
\section{Periodicity for the modified Helmholtz equation}
\label{sec:yukawaterms}

In this section, we consider the imposition of 
periodic boundary conditions for the two-dimensional 
modified Helmholtz equation with either one or two directions of
periodicity.
This requires an efficient scheme for the evaluation of 
the field due to all image sources in the far field (outside the nearest 
neighbors of $\C$). For simplicity, we fix $m_0=1$ when considering 
periodicity in the the $\eh_1=(d,0)$ direction alone and $m_0 = 3$ when
considering periodicty in both the 
$\eh_1=(d,0)$ and $\eh_2=(\xi,\eta)$ directions.
Since the governing Green's function is exponentially decaying, all of the 
infinite series in the definition of the periodizing operators
in \eqref{peropdef}
converge absolutely.

Our algorithm is based on 
splitting the far field kernels into
two parts for singly periodic case,
\be\label{ksplitting1}
\ba
K_1^{west}(\bt,\bs) &=
\sum_{m=-\infty}^{-2}
G(\bt,\bs+ (md,0)), \\
K_1^{east}(\bt,\bs) &=
\sum_{m=2}^\infty
G(\bt,\bs+ (md,0)),
\ea
\ee 
and four parts (as in \cref{fig3}) for the doubly periodic case:
\be\label{ksplitting2}
\ba
K_2^{west}(\bt,\bs) &=
\sum_{n=-1}^{1} \sum_{m=-\infty}^{-4}
G(\bt,\bs+\bl_{mn}), \\
K_2^{east}(\bt,\bs) &=
\sum_{n=-1}^{1} \sum_{m=4}^\infty
G(\bt,\bs+\bl_{mn}), \\
K_2^{south}(\bt,\bs) &=
\sum_{n=-\infty}^{-2}\sum_{m=-\infty}^\infty
G(\bt,\bs+\bl_{mn}), \\
K_2^{north}(\bt,\bs) &=
\sum_{n=2}^\infty \sum_{m=-\infty}^\infty
G(\bt,\bs+\bl_{mn}).
\ea
\ee 

The corresponding operators will be denoted by 
$\P_1^{west}$, $\P_1^{east}$,
$\P_2^{west}$, $\P_2^{east}$,
$\P_2^{south}$ and $\P_2^{north}$, so that
\be\label{splitting}
\ba
\P_1 &= \P_1^{west} + \P_1^{east}, \\
\P_2 &= \P_2^{west} + \P_2^{east} + 
\P_2^{south} + \P_2^{north}.  
\ea
\ee

\begin{theorem} \label{south_thm}
Let $\bS = \{ \bs_j \, | \, j = 1,\dots,N_S \}$ and 
$\bT = \{ \bt_l \, | \, l = 1,\dots,N_T \}$ 
denote collections of sources and targets
in the unit cell $\C$ and let $\P_2^{south}$ denote the 
$N_T \times N_S$ operator with
$\P_2^{south} (l,j) = K_2^{south}(\bt_l,\bs_j)$. 
Given a precision $\epsilon$, let
\be\label{mpk0s}
M = 
\left \lceil
\frac{d}{2\pi\eta}\log\left(
\frac{1}{1-e^{-\frac{2\pi\eta}{d}}}\frac{1}{\epsilon}\right)
\right \rceil \approx
\frac{A}{2\pi} (\log(A) + \log(1/\epsilon)).
\ee
For $m=-M,\dots,M$, let
\be \label{chialphadef}
\alpha_m=\frac{2\pi m}{d}, \quad \chi_m=\sqrt{\alpha_m^2+\beta^2}, \quad
Q_m=\chi_m \eta-i\alpha_m\xi \, .
\ee
Let 
$\bL^{south} \in \mathbb{C}^{N_T \times (2M+1)}$ and 
$\bR^{south} \in \mathbb{C}^{(2M+1) \times N_S}$ be dense matrices
and let $\bD^{south} \in \mathbb{C}^{(2M+1) \times (2M+1)}$ be a diagonal
matrix with
\be\label{lowranksouth3}
\ba
\bL^{south}(l,m) &=
e^{-\chi_m y_l} e^{i \alpha_m x_l} \, , \\
\bR^{south}(m,j) &=  
e^{\chi_m y'_j} \, e^{- i \alpha_m x'_j} \, , \\
\bD^{south}(m,m) &= \frac{1}{2 d \,  \chi_m} 
\frac{e^{-2Q_{m}}}{1 - e^{-Q_{m}}}.
\ea
\ee
Then
\begin{equation}  \label{lowrank2d}
\P_2^{south} = \bL^{south} \, \bD^{south} \, \bR^{south} + O(\epsilon). 
\end{equation}
\end{theorem}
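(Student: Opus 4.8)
The plan is to collapse the doubly-indexed far-field lattice sum into a single, exponentially convergent plane-wave series by carrying out the two sums in the order $m$ first (the $x$-periodic direction, with period $d$) and then $n$ (the oblique direction indexed by $\eh_2$), after which the rank-$(2M+1)$ factorization $\bL^{south}\bD^{south}\bR^{south}$ can be read off and the tail estimated. All the tools needed are in \cref{sec:analysis}: the Sommerfeld (plane-wave) representation \eqref{k0planewave} of $K_0$ and the Poisson summation formula.

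First I would fix $n\le -2$. For every source $\bs=(x',y')$ and target $\bt=(x,y)$ in $\C$ one has $y-(y'+n\eta)\ge \eta>0$, since $|y|,|y'|\le \eta/2$; hence every ``south'' image lies strictly below every target, and the $y>0$ branch of \eqref{k0planewave} (with $l=0$) applies uniformly. Inserting it into $G(\bt,\bs+\bl_{mn})=\frac{1}{2\pi}K_0(\beta\|\,\cdot\,\|)$ and summing over $m\in\mathbb{Z}$, I would apply the Poisson summation formula (equivalently, apply \eqref{deltarep} to $\sum_m e^{-i\lambda m d}$) to collapse the $\lambda$-integral onto the lattice $\{\alpha_m=2\pi m/d\}$; because $\beta>0$ the kernel and the Sommerfeld integrand decay exponentially, so the interchange of sum and integral is legitimate. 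This gives, for each $n\le -2$,
\[
\sum_{m\in\mathbb{Z}}G(\bt,\bs+\bl_{mn})
=\frac{1}{2d}\sum_{m\in\mathbb{Z}}\frac{1}{\chi_m}\,
e^{-\chi_m(y-y'-n\eta)}\,e^{i\alpha_m(x-x'-n\xi)}\,,
\]
where the prefactor $\frac{1}{2d}$ is the product of the $\frac{1}{2\pi}$ in $G$ and the $\frac{\pi}{d}$ from Poisson summation.

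Next I would sum over $n$ from $-\infty$ to $-2$. The only $n$-dependence is the factor $e^{n(\chi_m\eta-i\alpha_m\xi)}=e^{nQ_m}$, and since $\re(Q_m)=\chi_m\eta>0$ (note $\chi_m>0$ for every $m$ as $\beta>0$) the geometric series converges, $\sum_{n\le -2}e^{nQ_m}=e^{-2Q_m}/(1-e^{-Q_m})$; absolute convergence of the double series over $(m,n)$, again from $\beta>0$, justifies summing $m$ before $n$. Writing $e^{-\chi_m(y-y')}e^{i\alpha_m(x-x')}=\big(e^{-\chi_m y}e^{i\alpha_m x}\big)\big(e^{\chi_m y'}e^{-i\alpha_m x'}\big)$ and absorbing the remaining $m$-dependent scalar into the middle factor yields exactly $\sum_{m\in\mathbb{Z}}\bL^{south}(l,m)\,\bD^{south}(m,m)\,\bR^{south}(m,j)$ with the three matrices as stated, in particular $\bD^{south}(m,m)=\frac{1}{2d\chi_m}\,\frac{e^{-2Q_m}}{1-e^{-Q_m}}$. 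Finally, for the truncation I would bound the $m$-th term: the $y$- and $y'$-factors contribute at most $e^{\chi_m\eta}$, the numerator $|e^{-2Q_m}|=e^{-2\chi_m\eta}$, and for $m\ne 0$ one has $\chi_m\ge|\alpha_m|=2\pi|m|/d$, so $|1-e^{-Q_m}|^{-1}\le(1-e^{-2\pi\eta/d})^{-1}$ and $\chi_m^{-1}\le d/(2\pi|m|)$; hence the $m$-th term is $O\!\big(e^{-2\pi|m|\eta/d}/(1-e^{-2\pi\eta/d})\big)$. Summing this geometric tail over $|m|>M$ and requiring the result to be at most $\epsilon$ gives the stated $M$ in \eqref{mpk0s}, which is \eqref{lowrank2d}.

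The computation is elementary once \eqref{k0planewave} and Poisson summation are in hand; the part that needs care is the bookkeeping — selecting the correct Sommerfeld branch throughout, checking the separation hypotheses ($n\le -2$, $|y|,|y'|\le\eta/2$, $\re(Q_m)>0$, $\beta>0$) that make each interchange of sums and each geometric sum valid, and tracking the constants so that $\bD^{south}$ emerges with coefficient $\frac{1}{2d\chi_m}$ rather than an extra factor of $\pi$ or $2$.
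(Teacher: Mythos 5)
Your proposal is correct and follows essentially the same route as the paper's own proof: the $y>0$ Sommerfeld branch of \cref{k0planewave}, Poisson summation in $m$ to collapse the $\lambda$-integral onto $\alpha_m=2\pi m/d$, the geometric series in $n\le -2$ giving $e^{-2Q_m}/(1-e^{-Q_m})$, and the same tail bounds ($\chi_m\ge 2\pi|m|/d$, $|e^{-Q_m}|\le e^{-\chi_m\eta}$, $|1-e^{-Q_m}|\ge 1-e^{-\chi_m\eta}$) to justify truncation at the stated $M$. The only nitpick is cosmetic: the $\tfrac{1}{2d}$ prefactor arises as $\tfrac{1}{4\pi}$ (from $G=\tfrac{1}{2\pi}K_0$ together with the $\tfrac12$ in the Sommerfeld integral) times the $\tfrac{2\pi}{d}$ from the delta comb, which is what your "$\tfrac{1}{2\pi}$ times $\tfrac{\pi}{d}$" accounting implicitly bundles.
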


\begin{proof}
Combining \eqref{k0planewave} and \eqref{ksplitting2}, we obtain
\be
\ba
K_2^{south}(\bt,\bs_j) 
&=\sum_{n=-\infty}^{-2}\sum_{m=-\infty}^\infty
\int_{-\infty}^\infty
\frac{e^{-\sqrt{\lambda^2+\beta^2}(y-y'_j-n\eta)}}{4 \pi \sqrt{\lambda^2+\beta^2}}
  \, e^{i\lambda (x-x'_j-md-n\xi)} d\lambda\\
\hspace{-0.5in} &= \sum_{n=-\infty}^{-2}\sum_{m=-\infty}^\infty
\int_{-\infty}^\infty
\frac{e^{-\sqrt{\lambda^2+\beta^2}(y-y'_j-n\eta)}}{\sqrt{\lambda^2+\beta^2}}
  \, e^{i\lambda (x-x'_j-n\xi)} \frac{1}{2d}\delta\left(\lambda-\frac{2\pi m}{d}\right) d\lambda \\
\hspace{-0.5in} &= \sum_{n=-\infty}^{-2}\sum_{m=-\infty}^\infty
\frac{1}{2d} 
 \frac{e^{-\chi_m(y-y'_j-n\eta)}}{\chi_m}
\, e^{i\alpha_m(x-x'_j-n\xi)}, 
\ea
\ee
where $\chi_m, \alpha_m$ are given by
\cref{chialphadef}.
The last two equalities follow from the Poisson summation formula 
\cref{deltarep} and the Dirac delta function property.
Exchanging the order of summation and summing the relevant geometric series
in $n$ leads to
\be\label{pk0s}
K_2^{south}(\bt,\bs_j) 
=\frac{1}{2d}
\sum_{m=-\infty}^\infty
\frac{e^{-\chi_m(y-y'_j)+i\alpha_m(x-x'_j)}}{\chi_m} 
\frac{e^{-2Q_m}}{1-e^{-Q_m}}
\ee
where $Q_m$ is given in \cref{chialphadef}.

Truncating the sum at $|m| = M$ yields the $(2M+1)$ term approximation
\be\label{tpk0s}
{K_{2,M}}^{south}(\bt,\bs_j) =
\frac{1}{2d}
\sum_{m=-M}^{M}
\frac{e^{-\chi_m(y-y'_j)+i\alpha_m(x-x'_j)}}{\chi_m} 
\frac{e^{-2Q_m}}{1-e^{-Q_m}}
\ee
and the formulas in \cref{lowranksouth3}.

We now estimate the truncation error
\be
E_s= |{K_2}^{south}(\bt,\bs_j) - {K_{2.M}}^{south}(\bt,\bs_j) |.
\ee
For any $m\in \mathbb{Z}$, it is easy to verify that
\[ \chi_m\ge \frac{2\pi |m|}{d}, \ 
\left|e^{-\chi_m(y-y_j)-Q_m}\right|=
\left|e^{-\chi_m(\eta+y-y_j)}\right|\le 1,
\]
\[
\left|e^{-Q_m}\right|\le e^{-\chi_m\eta}, \ 
\left|1-e^{-Q_m}\right|\ge 1-e^{-\chi_m\eta}.
\]
From these bounds, it follows that
\be\label{esbound1}
\ba
E_s&\le \frac{1}{d}\sum_{m=M+1}^{\infty}\frac{1}{\chi_m}
\frac{e^{-\chi_m\eta}}{1-e^{-\chi_m\eta}}
< \frac{1}{M+1}\frac{e^{-\frac{2\pi (M+1)\eta}{d}}}{1-e^{-\frac{2\pi (M+1) \eta}{d}}}
\sum_{k=0}^{\infty}
e^{-\frac{2\pi k\eta}{d}}\\
&=\frac{1}{2 \pi (M+1)}
\frac{e^{-\frac{2\pi (M+1)\eta}{d}}}{\left(1-e^{-\frac{2\pi (M+1) \eta}{d}}\right)
  \left(1-e^{-\frac{2\pi\eta}{d}}\right)}.
\ea
\ee
with $M$ given by \cref{mpk0s}.
The estimate \cref{lowrank2d} follows, completing the proof.
\end{proof}

\begin{remark}
\cref{south_thm} yields a truncated version of
the formula \cref{lowranksouth} with
weights $\bc=(c_{-M},c_{-M+1},\dots,c_{M-1},c_{M})$ 
given by 
\[
\bc = \bD^{south} \, \bR^{south} \bq \, ,
\]
where $\bq=(q_1,\dots,q_{N_S})$ is the vector of charge strengths.
\end{remark}

\begin{remark} 
When $\beta \gg 1$, the value of $M$ in \cref{mpk0s} can be shown
to be even smaller, but since the cost is logarithmic in $A$ and $\epsilon$
we omit this more detailed analysis.
\end{remark}

\begin{remark}
The observation that Poisson summation yields rapidly converging
series approximations for lines or half spaces of lattice points
that do not pass through the origin was made in
\cite{enoch2001,mcphedran2000jmp} for the purpose of computing
lattice sums. 
\end{remark}

Essentially the same analysis yields
\begin{corollary}
The matrix $\P_2^{north}$ 
has the low-rank factorization
\[
\P_2^{north} = \bL^{north} \, \bD^{north} \, \bR^{north} + O(\epsilon)
\]
where $\bD^{north} = \overline{\bD^{south}}$, 
\be\label{lowranknorth}
\ba
\bL^{north}(l,m) &=
e^{\chi_m y_l} e^{i \alpha_m x_l} \, , \\
\bR^{north}(m,j) &=  
e^{-\chi_m y'_j} \, e^{- i \alpha_m x'_j} \, .
\ea
\ee
\end{corollary}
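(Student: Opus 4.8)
The plan is to mirror the proof of \cref{south_thm} under the reflection $y \mapsto -y$. The ``north'' block consists of image cells with $n \ge 2$, so for any $\bt \in \C$ and $\bs_j \in \C$ the vertical separation satisfies $y - y'_j - n\eta \le -\eta < 0$ (since $|y|,|y'_j| \le \eta/2$); hence the $y<0$ branch of the Sommerfeld representation \eqref{k0planewave} — with $l=0$, so with trivial multipole prefactor — applies uniformly across the whole block. Relative to the south case, switching branches has exactly two effects: the exponent $e^{-\sqrt{\lambda^2+\beta^2}\,y}$ becomes $e^{+\sqrt{\lambda^2+\beta^2}\,y}$, and, after summing the geometric series, the argument $Q_m$ is replaced by its complex conjugate.

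First I would substitute the $y<0$ branch of \eqref{k0planewave} into the definition of $K_2^{north}$ in \eqref{ksplitting2} and carry out the sum over $m$ using the Poisson summation identity \eqref{deltarep}. This collapses the $\lambda$-integral onto the frequencies $\lambda = \alpha_m = 2\pi m/d$ and, after combining the $\frac{1}{4\pi}$ kernel prefactor with the factor $\frac{2\pi}{d}$ from Poisson summation, gives
\[
K_2^{north}(\bt,\bs_j) = \frac{1}{2d}\sum_{n=2}^{\infty}\sum_{m=-\infty}^{\infty}
\frac{e^{\chi_m(y-y'_j-n\eta)}}{\chi_m}\, e^{i\alpha_m(x-x'_j-n\xi)},
\]
with $\chi_m,\alpha_m,Q_m$ as in \eqref{chialphadef}; note the sign change in the $y$-exponent relative to the south formula, which is the only trace of the branch switch. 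Exchanging the order of summation, the $n$-dependent factor is $e^{-\chi_m n\eta - i\alpha_m n\xi} = e^{-n\overline{Q_m}}$ (using that $\chi_m,\alpha_m,\eta,\xi$ are real, so $\overline{Q_m} = \chi_m\eta + i\alpha_m\xi$); summing the geometric series from $n=2$ produces $e^{-2\overline{Q_m}}/(1-e^{-\overline{Q_m}})$, which equals $\overline{\bD^{south}(m,m)}$ once divided by $2d\,\chi_m$. Splitting the remaining exponentials into their $\bt$- and $\bs_j$-parts yields the factors $\bL^{north}$ and $\bR^{north}$ of \eqref{lowranknorth} and identifies $\bD^{north} = \overline{\bD^{south}}$.

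Finally I would truncate at $|m| = M$ and bound the tail exactly as in \cref{south_thm}: one has $\chi_m \ge 2\pi|m|/d$, $|e^{-\overline{Q_m}}| = e^{-\chi_m\eta}$, $|1-e^{-\overline{Q_m}}| \ge 1 - e^{-\chi_m\eta}$, and $|e^{\chi_m(y-y'_j) - \overline{Q_m}}| = e^{\chi_m(y - y'_j - \eta)} \le 1$, so the geometric-series estimate reproduces the very same choice of $M$ in \eqref{mpk0s} and gives the analogue of \eqref{lowrank2d}, namely $\P_2^{north} = \bL^{north}\bD^{north}\bR^{north} + O(\eps)$. I do not expect a substantive obstacle here; this is literally the reflection of \cref{south_thm}. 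The only points warranting care are bookkeeping: verifying that the $y<0$ branch applies uniformly on the north block (not merely generically), and tracking the conjugation so that $\bD^{north}$ comes out as $\overline{\bD^{south}}$ rather than equal to $\bD^{south}$.
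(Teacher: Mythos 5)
Your proposal is correct and follows exactly the route the paper intends: the paper dispenses with the proof by noting "essentially the same analysis" as \cref{south_thm}, and your write-up is precisely that analysis with the $y<0$ Sommerfeld branch, Poisson summation in $m$, the geometric series over $n\ge 2$ producing $e^{-2\overline{Q_m}}/(1-e^{-\overline{Q_m}})$, and the identical tail bound giving the same $M$. The two points you flag (uniform validity of the $y<0$ branch since $y-y'_j-n\eta\le -\eta$, and the conjugation of $Q_m$) are handled correctly.
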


\begin{definition}  \label{epsrankdef}
Since the rank is precision-dependent, we say that
$\P_2^{north}$ has an 
$\epsilon$-rank of $2M+1$.
\end{definition}

It remains to consider the ``west" and ``east" contributions.
\begin{theorem}
Let $\bt,\bs_j$ lie in a unit cell $\C$ 
and $m_0 = 1$. Then, the kernels
$K_1^{west}$ and $K_1^{east}$ have the integral representations
\be\label{pylw0}
\ba
K_1^{west}(\bt,\bs_j) 
&= \int_{-\infty}^\infty
\frac{e^{-\sqrt{\lambda^2+\beta^2}(x-x'_j)}}{4 \pi \sqrt{\lambda^2+\beta^2}}
\,  e^{i\lambda (y-y'_j)} 
\frac{e^{-2\sqrt{\lambda^2+\beta^2}d}}
     {1-e^{-\sqrt{\lambda^2+\beta^2}d}}
     d\lambda, \\
K_1^{east}(\bt,\bs_j) 
&= \int_{-\infty}^\infty
\frac{e^{\sqrt{\lambda^2+\beta^2}(x-x'_j)}}{4 \pi \sqrt{\lambda^2+\beta^2}}
\,  e^{i\lambda (y-y'_j)} 
\frac{e^{-2\sqrt{\lambda^2+\beta^2}d}}
     {1-e^{-\sqrt{\lambda^2+\beta^2}d}}
     d\lambda. 
\ea
\ee
Letting $m_0 = 3$ with the additional assumption with $d \geq \sqrt{\xi^2+\eta^2}$, the kernels 
$K_2^{west}$ and $K_2^{east}$ 
have the integral representations 
\be\label{pylw02}
\ba
K_2^{west}(\bt,\bs_j) 
&= \sum_{n=-1}^1\int_{-\infty}^\infty
\frac{e^{-\sqrt{\lambda^2+\beta^2}(x-x'_j-n\xi)}}{4 \pi \sqrt{\lambda^2+\beta^2}}
\,  e^{i\lambda (y-y'_j-n\eta)} 
\frac{e^{-4\sqrt{\lambda^2+\beta^2}d}}
     {1-e^{-\sqrt{\lambda^2+\beta^2}d}}
     d\lambda, \\
K_2^{east}(\bt,\bs_j) 
&= \sum_{n=-1}^1\int_{-\infty}^\infty
\frac{e^{\sqrt{\lambda^2+\beta^2}(x-x'_j-n\xi)}}{4 \pi \sqrt{\lambda^2+\beta^2}}
\,  e^{i\lambda (y-y'_j-n\eta)} 
\frac{e^{-4\sqrt{\lambda^2+\beta^2}d}}
     {1-e^{-\sqrt{\lambda^2+\beta^2}d}}
     d\lambda.
\ea
\ee
\end{theorem}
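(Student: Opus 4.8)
The plan is to mimic, essentially verbatim, the structure of the proof of \cref{south_thm}, but now invoke the plane-wave representation \eqref{k0planewave} in the ``$x>0$'' form (for the ``east'' contributions) and the ``$x<0$'' form (for the ``west'' contributions), since in these subregions the separating coordinate is $x$ rather than $y$. Concretely, for $K_1^{west}(\bt,\bs_j)=\sum_{m=-\infty}^{-2}G(\bt,\bs_j+(md,0))$, each image source sits at horizontal offset $md \le -2d < 0$ relative to the target, so $x-(x'_j+md) > 0$ and we substitute the $x>0$ case of \eqref{k0planewave} (with $l=0$, so the prefactor is trivial). This writes each term of the $m$-sum as an integral in a spectral variable $\lambda$ against $e^{-\sqrt{\lambda^2+\beta^2}(x-x'_j-md)}e^{i\lambda(y-y'_j)}/(4\pi\sqrt{\lambda^2+\beta^2})$. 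The $m$-dependence now lives entirely in the factor $e^{\sqrt{\lambda^2+\beta^2}md}$ (note $m\le -2$), which is a geometric series in $e^{\sqrt{\lambda^2+\beta^2}d}$ — more precisely, $\sum_{m=-\infty}^{-2}e^{\sqrt{\lambda^2+\beta^2}md} = e^{-2\sqrt{\lambda^2+\beta^2}d}/(1-e^{-\sqrt{\lambda^2+\beta^2}d})$, which is exactly the rational factor appearing in \eqref{pylw0}. Summing this geometric series inside the integral and exchanging the (absolutely convergent) sum with the integral yields the first line of \eqref{pylw0}; the ``east'' case is identical with $m\ge 2$, using the $x<0$ branch of \eqref{k0planewave} so that the sign in the exponential $e^{+\sqrt{\lambda^2+\beta^2}(x-x'_j)}$ flips, giving the second line.

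For the doubly periodic ``west''/``east'' kernels \eqref{pylw02}, the argument is the same but now $\Lambda_{far}$ with $m_0=3$ means the west block is $\{(m,n): n\in\{-1,0,1\},\, m\le -4\}$, so I first split the outer sum over $n\in\{-1,0,1\}$ and, for each fixed $n$, apply \eqref{k0planewave} to the image at offset $\bl_{mn}=(md+n\xi,\,n\eta)$. The crucial point is that the $x>0$ branch is legitimate: we need $x-(x'_j+md+n\xi)>0$ for every $\bt,\bs_j\in\C$, $m\le-4$, $n\in\{-1,0,1\}$. Since $|x|,|x'_j|\le d/2$ (within the enclosing cell dimension) and $|n\xi|\le|\xi|\le\sqrt{\xi^2+\eta^2}\le d$ by the standing assumption $d\ge\sqrt{\xi^2+\eta^2}$, the worst case is $x-x'_j-md-n\xi \ge -d/2 - d/2 + 4d - d = 2d > 0$; this is precisely why $m_0=3$ (i.e. $m\le-4$) is needed rather than $m_0=1$. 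With positivity secured, the $m$-sum $\sum_{m=-\infty}^{-4}e^{\sqrt{\lambda^2+\beta^2}md}=e^{-4\sqrt{\lambda^2+\beta^2}d}/(1-e^{-\sqrt{\lambda^2+\beta^2}d})$ produces the rational factor with the $e^{-4\sqrt{\lambda^2+\beta^2}d}$ numerator in \eqref{pylw02}, and the residual $n$-dependent phases $e^{-\sqrt{\lambda^2+\beta^2}n\xi}e^{-i\lambda n\eta}$ (wait — the offset is in $x$ so it is $e^{-\sqrt{\lambda^2+\beta^2}(-n\xi)}$ in the decay and $e^{i\lambda(-n\eta)}$ in the oscillation) are kept inside the outer sum over $n$, exactly as displayed.

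The main obstacle — the only place real care is needed — is the justification of swapping the infinite sum over $m$ with the $\lambda$-integral, and of the branch-of-\eqref{k0planewave} selection. For $\beta>0$ the integrand in \eqref{k0planewave} decays exponentially in $|\lambda|$ (through $e^{-\sqrt{\lambda^2+\beta^2}|x-\cdots|}$ with a strictly positive horizontal separation bounded below, as computed above), and the geometric ratio $e^{-\sqrt{\lambda^2+\beta^2}d}\le e^{-\beta d}<1$ uniformly in $\lambda$, so Fubini/Tonelli applies and the interchange is rigorous; for $\beta=0$ one appeals to charge neutrality \eqref{chargeneutral}, exactly as in \cref{sec:ggq}, so that the mild singularity of the integrand at $\lambda=0$ is integrated against a function vanishing there. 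I would also remark that, unlike the ``south'' case, no truncation/low-rank statement is made here — these integral representations are the starting point for the generalized Gaussian quadrature of \cref{sec:ggq}, which is where the rank-$O(A)$ compression ultimately enters — so the theorem as stated requires only the exact identities \eqref{pylw0}--\eqref{pylw02}, with the error analysis deferred.
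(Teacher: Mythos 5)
Your proposal follows essentially the same route as the paper: apply the Sommerfeld representation \eqref{k0planewave} (with $l=0$) term by term, sum the geometric series in $m$, and verify that the relevant branch of the representation is valid with enough horizontal separation to justify the interchange of sum and integral. Two small corrections are needed, though neither is fatal. First, your opening sentence assigns the $x<0$ branch to the ``west'' kernel and the $x>0$ branch to the ``east'' kernel, which is backwards; your detailed computation then correctly uses the $x>0$ branch for the west images (since $x-x'_j-md>0$ for $m\le -2$), so this is only an internal inconsistency, but it should be fixed. Second, in the doubly periodic positivity check you use $|x|,|x'_j|\le d/2$; that holds only for a rectangular cell. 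For the parallelogram $\C=\{x_1\eh_1+x_2\eh_2:\ x_1,x_2\in[-\half,\half]\}$ the $x$-extent is $d+|\xi|$, so the correct worst-case separation for $m\le -4$, $|n|\le 1$ is
\[
x-x'_j-md-n\xi\ \ge\ 4d-(d+|\xi|)-|\xi| \;=\; 3d-2|\xi| \;>\; d,
\]
using $|\xi|<d$ (which follows from $d\ge\sqrt{\xi^2+\eta^2}$ and $\eta>0$); your claimed lower bound of $2d$ is an overestimate. This corrected estimate is exactly the paper's, and note that the paper insists on separation exceeding $d$ (not merely positivity) so that the integrand in \eqref{pylw02} decays at least as fast as $e^{-\sqrt{\lambda^2+\beta^2}d}$, which is what the precomputed generalized Gaussian quadrature of \cref{sec:ggq} (rescaled $x\in[1,7]$) relies on. With these repairs your argument matches the paper's proof; the Fubini justification for $\beta>0$ is fine, and the remark about $\beta=0$ and charge neutrality is extraneous here since this theorem sits in the modified Helmholtz setting.
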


\begin{proof}
These formulas follow directly from
\cref{k0planewave} and \cref{ksplitting1} and summation of the geometric
series in $m$. In the singly periodic case, excluding one nearest neighbor from
either side is sufficient to ensure the exponential decay of the integrand
in \eqref{pylw0}.
In the doubly periodic case, with a parallelogram as the unit cell,
we must ensure that we are using the integral representation
of the modified Bessel function where it is valid and that the resulting integrand
decays exponentially fast.
For this, we must have that
$x-x'-md-n\xi>d$ for all $m\le -(m_0+1)$ and $|n|\leq 1$ in the ``west"
case, and
$x-x'+md-n\xi>d$ for all $m\ge m_0+1$ and $|n|\leq 1$ in the ``east" case.
It is straightforward to verify that if $m_0=3$, then
$x-x'-md-n\xi\ge 4d-(d+\xi)-\xi>d$ in the first instance and that 
$x-x'+md-n\xi>d$ in the second instance 
under the stated assumption
about the unit cell. 
\end{proof}

The reader will note that there is a major difference between the east/west
representations and those for the north and south. 
The latter are fully discrete,
while for the east and west representations, we have an integral that needs
to be evaluated before we can develop a low-rank decomposition. 
For this, we will make use of numerical
quadrature, in order to develop a low-rank
approximation of precision $\epsilon$.
This provides a discrete approximation of the Sommerfeld representation
for $K_1^{west}(\bt,\bs_j)$ and $K_2^{west}(\bt,\bs_j)$
in \cref{pylw0,pylw02} rewritten in the form
\be
\label{tk0west}
\ba
K_1^{west}(\bt,\bs_j) 
&=  \Re \left( \int_{0}^\infty
\frac{e^{-\sqrt{\lambda^2+\beta^2}(x-x'_j)}}{2 \pi \sqrt{\lambda^2+\beta^2}}
\,  e^{i\lambda (y-y'_j)} 
\frac{e^{-2\sqrt{\lambda^2+\beta^2}d}}
     {1-e^{-\sqrt{\lambda^2+\beta^2}d}}
     d\lambda  \right) \\
&\hspace{-0.25in} \approx \Re \Bigg(
\sum_{n=1}^{N^1_q(\beta,d,\eta)}  w_{n,1}(\beta,d,\eta)
\frac{e^{-\sqrt{\lambda_{n,1}^2+\beta^2}(x-x'_j)}}
{2 \pi \sqrt{\lambda_{n,1}^2+\beta^2}}
\,  e^{i\lambda_{n,1} (y-y'_j)} 
\frac{e^{-2\sqrt{\lambda_{n,1}^2+\beta^2}d}}
     {1-e^{-\sqrt{\lambda_{n,1}^2+\beta^2}d}} \Bigg). \\
K_2^{west}(\bt,\bs_j) 
&=  \Re \Bigg( \int_{0}^\infty
\frac{e^{-\sqrt{\lambda^2+\beta^2}(x-x'_j)}}{2 \pi \sqrt{\lambda^2+\beta^2}}
\,  e^{i\lambda (y-y'_j)} 
\frac{e^{-4\sqrt{\lambda^2+\beta^2}d}}
     {1-e^{-\sqrt{\lambda^2+\beta^2}d}}  \\
&\hspace{1in} 
[ e^{-\sqrt{\lambda^2+\beta^2} \xi + i \lambda \eta} +
  e^{\sqrt{\lambda^2+\beta^2} \xi - i \lambda \eta} + 1] \,
     d\lambda  \Bigg) \\
&\hspace{-0.25in} \approx \Re \Bigg(
\sum_{n=1}^{N^2_q(\beta,d,\eta)}  w_{n,2}(\beta,d,\eta)
\frac{e^{-\sqrt{\lambda_{n,2}^2+\beta^2}(x-x'_j)}}{2 \pi \sqrt{\lambda_{n,2}^2+\beta^2}}
\,  e^{i\lambda_{n,2}(y-y'_j)} 
\frac{e^{-4\sqrt{\lambda_{n,2}^2+\beta^2}d}}
     {1-e^{-\sqrt{\lambda_{n,2}^2+\beta^2}d}}  \\
&\hspace{1in} 
[ e^{-\sqrt{\lambda_{n,2}^2+\beta^2} \xi + i \lambda_{n,2} \eta} +
  e^{\sqrt{\lambda_{n,2}^2+\beta^2} \xi - i \lambda_{n,2} \eta} + 1] \, 
\Bigg).
\ea
\ee
Note that different numbers of nodes 
may be needed for the two cases.
We denote by $N^1_q$ and $N^2_q$ the number of nodes needed
for $K_1^{west}$ and 
$K_2^{west}$, respectively, with weights and nodes
$\{ w_{n,1}, \lambda_{n,1} \}$ and 
$\{ w_{n,2}, \lambda_{n,2} \}$.

In both cases, we define $\lambda'=\lambda/d$ so that
the decaying exponential in the integrand decays at least as fast as
$e^{-\sqrt{\lambda'^2+\tilde{\beta}^2}}$ ($\tilde{\beta}=\beta\cdot d$). 
In the doubly periodic case, this leads to the consideration of the integrals
in \cref{sec:ggq} where $(x,y)\in [1,7]\times[-2,2]$. Generalized Gaussian
quadrature can be applied to construct numerical quadratures for
a given precision $\epsilon$, with a weak dependence on $\beta$.
These quadratures are valid for unit cells with arbitrary 
geometric parameters and thus can be precomputed and stored
\cite{ggq1,ggq2,ggq3}.

In the singly periodic case, $x$ lies in $[1,3]$, while the range of $y$
can be very large when $\eta\gg d$, leading to highly oscillatory integrals.
In this case, the quadrature is constructed as follows. First, 
the interval $[0,\infty)$ is truncated to $[0,L]$, which can be
accomplished easily due to the exponential decay in the $x$ variable,
with $L=\sqrt{(\log(1/\epsilon))^2-\beta^2}$.
If $\log(1/\epsilon)\le\beta$, we can set $L=0$, since
the whole integral is then negligible. 
Second, in order to accurately capture the oscillatory
behavior in the $y$ variable, the interval $[0,L]$ is further divided
into subintervals $[j\lambda_0,(j+1)\lambda_0]$ for $j=0,\ldots, \lceil L/\lambda_0\rceil$,
where $\lambda_0=2\pi d/\eta$. A shifted and scaled $n$ point 
Gauss-Legendre quadrature rule (with $n=O(\log(1/\epsilon))$)
is then applied to discretize the integral on each subinterval
$[j\lambda_0,(j+1)\lambda_0]$ for $j\ge 1$. 
Third, when $\beta$ is very small, a new difficulty emerges - namely that
the integrand is nearly singular at the origin. 
In that case, we further divide $[0,\lambda_0]$
into dyadic subintervals $[0,a]$ and $[2^{k-1}a,2^{k}a]$ for $k=1,\ldots,l_{\rm max}$,
where $l_{\rm max}=\lceil\log_2(\lambda_0/\beta)\rceil$ and $a=\lambda_0/l_{\rm max}$.
A shifted and scaled $n$ point Gauss-Legendre quadrature rule is again applied to
discretize the integral on each such subinterval. To summarize, the total number of
quadrature nodes $N_q^1$ (i.e., the numerical rank of the periodizing operator) is
$O\left(\log(1/\epsilon)\left(\log(1/\beta)+\log(1/\epsilon)\cdot \lceil\frac{\eta}{d}\rceil\right)\right)$.
In the limit $\beta \rightarrow 0$, it is also possible to develop asymptotic
expansions in $\beta$, which we do not consider here.

\begin{remark}
  The difference between the singly and doubly periodic cases 
seems rather significant in terms of quadrature design. 
However, this distinction is somewhat artificial. 
The reason that the quadrature problem is simple in 
the doubly periodic case is that 
we have the freedom to choose which lattice vector is oriented along
the $x$-axis. The difficult direction to deal with is the {\em short} axis of the 
unit cell and, by our convention, this makes the 
north/south periodizing kernels more oscillatory which are already discrete.
Thus, the number of
terms in the plane-wave expansion for the north/south parts will
grow linearly with
respect to the aspect ratio $\frac{d}{\eta}$ but without the need for quadrature
design.

To summarize, the numerical rank of the periodizing operators may grow linearly 
with respect to the aspect ratio for both singly and doubly periodic problems. 
When the rank $r$ is large,
the NUFFT can be used to reduce the computational cost from
$O(r (N_T+N_S))$ to $O(\log(1/\epsilon)(r\log r + (N_T+N_S)\log(1/\epsilon)))$
with $\epsilon$ the prescribed precision.
\end{remark}

\begin{theorem} \label{west_thm}
Let $\bS = \{ \bs_j \, | \, j = 1,\dots,N_S \}$ and 
$\bT = \{ \bt_l \, | \, l = 1,\dots,N_T \}$ 
denote collections of sources and targets
in the unit cell $\C$ and let $\P_1^{west}, \P_2^{west}$ denote the 
$N_T \times N_S$ operators with
$\P_1^{west} (l,j) = K_1^{west}(\bt_l,\bs_j)$ and
$\P_2^{west} (l,j) = K_2^{west}(\bt_l,\bs_j)$. 
Given a precision $\epsilon$, let $N^1_q(\beta,d,\eta),
N^2_q(\beta,d,\eta)$ denote the 
number of points needed in the numerical quadrature 
for $K_1^{west}(\bt,\bs)$ and $K_2^{west}(\bt,\bs)$, 
with weights and nodes $\{ w_{n,1},\lambda_{n,1} \}$
$\{ w_{n,2},\lambda_{n,2} \}$, respectively.
Let 
$\bL_1^{west} \in \mathbb{C}^{N_T \times N^1_q}$,
$\bL_2^{west} \in \mathbb{C}^{N_T \times N^2_q}$,
$\bR_1^{west} \in \mathbb{C}^{N^1_q \times N_S}$,
$\bR_2^{west} \in \mathbb{C}^{N^2_q \times N_S}$
be dense matrices
and let $\bD_1^{e/w}$, $\bD_2^{west}$
be diagonal matrices of dimension $N^1_q$ and $N^2_q$, respectively,
with
\be\label{lowrankwest}
\ba
\bL_1^{west}(l,n) &=
e^{-\sqrt{\lambda_{n,1}^2 + \beta^2}x_l} e^{i \lambda_{n,1} y_l} \, , \\
\bL_2^{west}(l,n) &=
e^{-\sqrt{\lambda_{n,2}^2 + \beta^2}x_l} e^{i \lambda_{n,2} y_l} \, , \\
\bR_1^{west}(n,j) &=  
e^{\sqrt{\lambda_{n,1}^2+\beta^2} x'_j} \, e^{-i \lambda_{n,1} y'_j} \, , \\
\bR_2^{west}(n,j) &=  
e^{\sqrt{\lambda_{n,2}^2+\beta^2} x'_j} \, e^{-i \lambda_{n,2} y'_j} \, , \\
\bD_1^{e/w}(n,n) &=  \frac{1}{2 \pi} \frac{w_{n,1}}{\sqrt{\lambda_{n,1}^2+\beta^2}} 
\frac{e^{-2\sqrt{\lambda_{n,1}^2+\beta^2}d}}
{1 - e^{-\sqrt{\lambda_{n,1}^2+\beta^2}d}} \, , \\
\bD_2^{west}(n,n) &= \frac{w_{n,2}}{\sqrt{\lambda_{n,2}^2+\beta^2}}
\frac{e^{-4\sqrt{\lambda_{n,2}^2+\beta^2}d}}
{1 - e^{-\sqrt{\lambda_{n,2}^2+\beta^2}d}}
\frac{[ e^{-\sqrt{\lambda_{n,2}^2+\beta^2} \xi + i \lambda_{n,2} \eta} +
  e^{\sqrt{\lambda_{n,2}^2+\beta^2} \xi - i \lambda_{n,2} \eta} + 1]}{2 \pi}.
\ea
\ee
Let
\[
\ba
\P_1^{west} &= \bL_1^{west} \, \bD_1^{e/w} \, \bR_1^{west}, \\
\P_2^{west} &= \bL_2^{west} \, \bD_2^{west} \, \bR_2^{west}. \\
\ea
\]
Then the real parts of the vectors
\[  \P_1^{west} \bq, \ \P_2^{west} \bq 
\]
denote the contributions from the west sources to the 
corresponding periodizing potentials.
\end{theorem}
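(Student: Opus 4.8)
The plan is to obtain the stated factorizations directly from the integral representations in \eqref{pylw0} and \eqref{pylw02}, which have already been established in the preceding theorem; no new analytic identity is required, only careful bookkeeping. First I would start from the real-symmetrized forms in \eqref{tk0west}: restricting the integral $\int_{-\infty}^\infty$ to $\int_0^\infty$ and taking twice the real part is justified because the integrand, apart from the factor $e^{i\lambda(y-y'_j)}$, is even in $\lambda$ (the functions $\sqrt{\lambda^2+\beta^2}$, the exponentials in $x$, and the geometric-series factor all depend on $\lambda$ only through $\lambda^2$; in the doubly periodic case the bracket $[\,e^{-\sqrt{\lambda^2+\beta^2}\xi+i\lambda\eta}+e^{\sqrt{\lambda^2+\beta^2}\xi-i\lambda\eta}+1\,]$ is likewise invariant under $\lambda\mapsto-\lambda$ after conjugation). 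This is exactly where the $\Re(\cdot)$ and the factor-of-two adjustment in the constants come from.

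Next I would apply the chosen quadrature rule $\{w_{n,k},\lambda_{n,k}\}$ (generalized Gaussian in the doubly periodic case, the composite Gauss--Legendre rule described above in the singly periodic case) to the half-line integral, producing the finite sums displayed in \eqref{tk0west}. The key algebraic step is then to separate, in the $n$th summand, the factors that depend only on the target $\bt_l=(x_l,y_l)$, only on the source $\bs_j=(x'_j,y'_j)$, and only on the quadrature node: writing $\mu_{n,k}=\sqrt{\lambda_{n,k}^2+\beta^2}$, the summand factors as $\big(e^{-\mu_{n,1}x_l}e^{i\lambda_{n,1}y_l}\big)\cdot\big(\tfrac{1}{2\pi}\tfrac{w_{n,1}}{\mu_{n,1}}\tfrac{e^{-2\mu_{n,1}d}}{1-e^{-\mu_{n,1}d}}\big)\cdot\big(e^{\mu_{n,1}x'_j}e^{-i\lambda_{n,1}y'_j}\big)$ in the singly periodic case, and analogously with $e^{-2\mu d}\to e^{-4\mu d}$ and the extra bracket absorbed into the diagonal entry in the doubly periodic case. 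Matching these three groups against the definitions \eqref{lowrankwest} of $\bL_1^{west},\bD_1^{e/w},\bR_1^{west}$ (resp.\ the subscript-$2$ versions) gives the identity $\P_k^{west}(l,j)=(\bL_k^{west}\bD_k^{e/w}\bR_k^{west})(l,j)$ up to the quadrature error, and hence that $\Re(\P_k^{west}\bq)$ is the periodizing potential contribution from the west sources.

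The only substantive point requiring care — and the place I expect to do real work — is the quadrature error bound, i.e.\ verifying that the finite sums in \eqref{tk0west} approximate the integrals in \eqref{pylw0}--\eqref{pylw02} to precision $\epsilon$ with the claimed node counts $N^1_q,N^2_q$. For the doubly periodic case this rests on the analyticity and decay of the integrand after the rescaling $\lambda'=\lambda/d$ restricting $(x,y)$ to $[1,7]\times[-2,2]$, which is precisely the setting in which the generalized Gaussian rules of \cref{sec:ggq} were designed and precomputed; I would simply invoke the error guarantee of those rules, noting that the extra factor $[\,e^{-\mu\xi+i\lambda\eta}+e^{\mu\xi-i\lambda\eta}+1\,]$ stays bounded and analytic in the relevant strip because $d\ge\sqrt{\xi^2+\eta^2}$ keeps $e^{-\mu(d-\xi)}$ and the like exponentially controlled. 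For the singly periodic case I would appeal to the construction described above: truncation of $[0,\infty)$ to $[0,L]$ with $L=\sqrt{(\log(1/\epsilon))^2-\beta^2}$ controls the tail via the exponential decay in $x\ge 1$, the subdivision into panels of length $\lambda_0=2\pi d/\eta$ resolves the oscillation $e^{i\lambda(y-y'_j)}$ with $|y-y'_j|\le\eta$, and the dyadic refinement near the origin handles the near-singular $1/\sqrt{\lambda^2+\beta^2}$ factor when $\beta$ is small; Gauss--Legendre on each smooth panel then converges spectrally, giving the stated total node count. Everything else is routine separation of variables, so I would keep the proof short, emphasizing the evenness argument and citing the quadrature estimates rather than reproving them.
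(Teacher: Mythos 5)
Your proposal is correct and follows essentially the same route as the paper, which states \cref{west_thm} without a separate proof precisely because it is the matrix restatement of the discretized representation \cref{tk0west}: symmetrize \cref{pylw0,pylw02} to a half-line integral via the evenness/conjugation argument (producing the $\Re(\cdot)$ and the $1/(2\pi)$ factors), apply the quadrature $\{w_{n,k},\lambda_{n,k}\}$, and separate the target-, node-, and source-dependent factors into $\bL$, $\bD$, $\bR$. Your handling of the quadrature error by citing the generalized Gaussian rules of \cref{sec:ggq} for the doubly periodic case and the truncated, panel-based Gauss--Legendre construction for the singly periodic case is exactly how the paper disposes of that point as well.
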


\begin{corollary}
The matrices $\P_1^{east}$ and $\P_2^{east}$
have the low-rank factorizations
\[
\ba
\P_1^{east} &= \bL_1^{east} \, \bD_1^{e/w} \, \bR_1^{east} + O(\epsilon) \\
\P_2^{east} &= \bL_2^{east} \, \bD_2^{east} \, \bR_2^{east} + O(\epsilon) 
\ea
\]
where $\bD_2^{east} = \overline{\bD_2^{west}}$, 
\be\label{lowrankeast}
\ba
\bL_1^{east}(l,n) &=
e^{\sqrt{\lambda_{n,1}^2 + \beta^2}x_l} e^{i \lambda_{n,1} y_l} \, , \\
\bL_2^{east}(l,n) &=
e^{\sqrt{\lambda_{n,2}^2 + \beta^2}x_l} e^{i \lambda_{n,2} y_l} \, , \\
\bR_1^{east}(n,j) &=  
e^{-\sqrt{\lambda_{n,1}^2+\beta^2} x'_j} \, e^{-i \lambda_{n,1} y'_j} \, ,\\
\bR_2^{east}(n,j) &=  
e^{-\sqrt{\lambda_{n,2}^2+\beta^2} x'_j} \, e^{-i \lambda_{n,2} y'_j} \, .
\ea
\ee
\end{corollary}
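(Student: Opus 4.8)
The plan is to follow the proof of \cref{west_thm} almost verbatim, using that the ``east'' configuration is the mirror image of the ``west'' one under the reflection $x-x'_j \mapsto -(x-x'_j)$. The kernels $K_1^{east}$ and $K_2^{east}$ already have the integral representations \eqref{pylw0} and \eqref{pylw02}, which coincide with the ``west'' representations except that the factor decaying in $x$ carries the opposite sign, $e^{+\sqrt{\lambda^2+\beta^2}(x-x'_j)}$ in place of $e^{-\sqrt{\lambda^2+\beta^2}(x-x'_j)}$. This is exactly because for the east image cells the target $\bt\in\C$ lies to the west of every source, so the $x<0$ branch of \eqref{k0planewave} is the relevant one; every other ingredient of the integrand --- the geometric factors $e^{-2\sqrt{\lambda^2+\beta^2}d}/(1-e^{-\sqrt{\lambda^2+\beta^2}d})$ and $e^{-4\sqrt{\lambda^2+\beta^2}d}/(1-e^{-\sqrt{\lambda^2+\beta^2}d})$, the prefactor $1/(4\pi\sqrt{\lambda^2+\beta^2})$, the oscillatory factor $e^{i\lambda(y-y'_j)}$, and (in the doubly periodic case) the sum over $n\in\{-1,0,1\}$ --- is unchanged.

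First I would rewrite the east integrals over $(0,\infty)$ and take real parts, as in the passage leading to \eqref{tk0west}, and then discretize with the \emph{same} nodes and weights $\{w_{n,1},\lambda_{n,1}\}$, $\{w_{n,2},\lambda_{n,2}\}$ that were built for the west integrals. This is legitimate because, after the rescaling $\lambda'=\lambda/d$, the rescaled argument of the $x$-decaying exponential again lies in $[1,3]$ (singly periodic) or $[1,7]$ (doubly periodic) and the rescaled $y$-variable again lies in $[-2,2]$: the far-field index sets $\{m\ge m_0+1\}$ and $\{m\le-(m_0+1)\}$ are mirror images of one another, and for $m_0=3$ the hypothesis $d\ge\sqrt{\xi^2+\eta^2}$ guarantees the east integrand still decays exponentially fast. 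Hence the quadrature error is controlled by the same estimate used in \cref{west_thm}, giving the $O(\epsilon)$ remainder.

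Next I would separate the discretized summand into a target-dependent factor, a source-dependent factor, and a purely $\lambda$-dependent factor. Writing $e^{\pm\sqrt{\lambda^2+\beta^2}(x-x'_j)}=e^{\pm\sqrt{\lambda^2+\beta^2}x}\,e^{\mp\sqrt{\lambda^2+\beta^2}x'_j}$ and $e^{i\lambda(y-y'_j)}=e^{i\lambda y}\,e^{-i\lambda y'_j}$, the target factor becomes $e^{\sqrt{\lambda_n^2+\beta^2}x_l}e^{i\lambda_n y_l}$ and the source factor becomes $e^{-\sqrt{\lambda_n^2+\beta^2}x'_j}e^{-i\lambda_n y'_j}$, which are precisely the entries of $\bL_1^{east},\bL_2^{east}$ and $\bR_1^{east},\bR_2^{east}$ in \eqref{lowrankeast} (they differ from their west counterparts only in the signs of the $x_l$ and $x'_j$ exponents). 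The remaining $\lambda$-dependent factor is the diagonal matrix: for the singly periodic kernel it is literally identical to the one in \eqref{lowrankwest}, so $\bD_1^{east}=\bD_1^{e/w}$; for the doubly periodic kernel the only change is that the sum over $n$ produces the bracket $e^{-\sqrt{\lambda_n^2+\beta^2}\xi-i\lambda_n\eta}+e^{\sqrt{\lambda_n^2+\beta^2}\xi+i\lambda_n\eta}+1$ instead of $e^{-\sqrt{\lambda_n^2+\beta^2}\xi+i\lambda_n\eta}+e^{\sqrt{\lambda_n^2+\beta^2}\xi-i\lambda_n\eta}+1$, and since $\lambda_n,\beta,\xi,\eta$ and all other factors are real this is the complex conjugate of the corresponding west entry, so $\bD_2^{east}=\overline{\bD_2^{west}}$. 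Reassembling the three factors yields the claimed factorizations.

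I do not anticipate a genuine obstacle: the corollary is in essence the reflection $x-x'_j\mapsto-(x-x'_j)$ applied to \cref{west_thm}. The one thing deserving a line of care is checking that the precomputed west quadrature stays accurate for the east integrand --- that is, that the rescaled arguments still fall inside the generalized-Gaussian design box of \cref{sec:ggq} in the doubly periodic case, and that the multi-panel Gauss--Legendre rule still resolves the $y$-oscillation in the singly periodic case --- but this follows at once from the mirror symmetry of the far-field cell indices together with the geometric hypotheses already assumed.
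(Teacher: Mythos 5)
Your proposal is correct and matches the paper's (implicit) reasoning: the corollary is obtained exactly as in \cref{west_thm}, by starting from the east integral representations in \cref{pylw0,pylw02}, reusing the west quadrature nodes and weights, and splitting the integrand into target, source, and $\lambda$-dependent factors, with the sign flip in the $x$-exponentials giving \eqref{lowrankeast} and the conjugated $n$-bracket giving $\bD_2^{east}=\overline{\bD_2^{west}}$ (and $\bD_1^{e/w}$ unchanged in the singly periodic case). Your check that the east integrand still decays under the hypothesis $d\ge\sqrt{\xi^2+\eta^2}$, mirroring the west case, is exactly the point the paper relies on.
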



%
%
%
\section{Periodizing operators for the Poisson equation} \label{sec:lap}

In this section, we derive formulas for
$\P_1^{west}$, $\P_1^{east}$,
$\P_2^{west}$, $\P_2^{east}$,
$\P_2^{south}$ and $\P_2^{north}$ in the limit $\beta \rightarrow 0$,
allowing us to impose periodic boundary conditions for the Poisson equation
using the same formalism
\be\label{splitting_lap}
\ba
\P_1 &= \P_1^{west} + \P_1^{east}, \\
\P_2 &= \P_2^{west} + \P_2^{east} + 
\P_2^{south} + \P_2^{north}.  
\ea
\ee

As noted earlier, we require charge neutrality for the periodic
problem to be well-posed. Moreover, as is well-known, 
the potential is only unique up to an arbitrary constant.

\begin{theorem} \label{south_thm_lap}
Let $\bS = \{ \bs_j \, | \, j = 1,\dots,N_S \}$,
$\bq=(q_1,\dots,q_{N_S})$, and
$\bT = \{ \bt_l \, | \, l = 1,\dots,N_T \}$ 
denote collections of source locations, charge strengths and targets
in the unit cell $\C$ 
with $\sum_{j=1}^{N_S} q_j = 0$. 
Given a precision $\epsilon$, let 
\be\label{mpk0s_lap}
M = 
\left \lceil
\frac{d}{2\pi\eta}\log\left(
\frac{1}{1-e^{-\frac{2\pi\eta}{d}}}\frac{1}{\epsilon}\right)
\right \rceil \approx
\frac{A}{2\pi} (\log(A) + \log(1/\epsilon)).
\ee
For $m=-M.\dots,M$, let
\be \label{alphaqdef}
\alpha_m=\frac{2\pi m}{d}, \quad
Q_m=\alpha_m \eta-i\alpha_m\xi \, .
\ee
Let 
$\bL^{south} \in \mathbb{C}^{N_T \times (2M+1)}$ and 
$\bR^{south} \in \mathbb{C}^{(2M+1) \times N_S}$ be dense matrices
and let $\bD^{south} \in \mathbb{C}^{(2M+1) \times (2M+1)}$ be a diagonal
matrix with
\be\label{lowranksouth3_lap}
\begin{split}
&\bL^{south}(l,m) \ \ =
e^{-|\alpha_m| y_l} e^{i \alpha_m x_l} \quad {\rm for}\ m \neq 0, \\
&\bL^{south}(l,0) \ \ \ =  
y_l, \\
&\bR^{south}(m,j)\  =  
e^{|\alpha_m| y'_j} \, e^{- i \alpha_m x'_j} \quad {\rm for}\ m \neq 0, \\
&\bR^{south}(0,j) \ \ =   
y'_j, \\ 
&\bD^{south}(m,m) = 
\frac{1}{4 \pi |m|} 
\frac{e^{-2Q_{m}}}{1 - e^{-Q_{m}}}  \quad {\rm for}\ m \neq 0 \\
&\bD^{south}(0,0) \ \  =  
-\frac{1}{2 d\eta}. 
\end{split}
\ee
Then
\[
\P_2^{south} = \bL^{south} \, \bD^{south} \, \bR^{south} + O(\epsilon). 
\]
\end{theorem}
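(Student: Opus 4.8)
The plan is to obtain the claimed factorization as the $\beta\to 0$ limit of \cref{south_thm}. The starting point is the exact Fourier series \eqref{pk0s} for the modified Helmholtz ``south'' kernel, with the $m=0$ term separated:
\be
K_2^{south}(\bt,\bs_j)=\frac{1}{2d}\,\frac{e^{-\beta(y-y'_j)}}{\beta}\,\frac{e^{-2\beta\eta}}{1-e^{-\beta\eta}}
\;+\;\frac{1}{2d}\sum_{m\neq 0}\frac{e^{-\chi_m(y-y'_j)+i\alpha_m(x-x'_j)}}{\chi_m}\,\frac{e^{-2Q_m}}{1-e^{-Q_m}}\,.
\ee
For the nonzero modes the limit is immediate: $\chi_m=\sqrt{\alpha_m^{2}+\beta^{2}}\to|\alpha_m|=2\pi|m|/d$, $Q_m$ tends to the value in \eqref{alphaqdef}, and the prefactor $1/(2d\chi_m)\to 1/(4\pi|m|)$, so reading off the summand term by term reproduces exactly the entries $\bL^{south}(l,m)$, $\bR^{south}(m,j)$, $\bD^{south}(m,m)$ with $m\neq 0$ in \eqref{lowranksouth3_lap}. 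Truncating the $m\neq 0$ sum at $|m|=M$, the three inequalities used in the proof of \cref{south_thm} ($|e^{-\chi_m(y-y'_j)-Q_m}|\le 1$, $|e^{-Q_m}|\le e^{-\chi_m\eta}$, $|1-e^{-Q_m}|\ge 1-e^{-\chi_m\eta}$) hold verbatim with $\chi_m$ replaced by $|\alpha_m|=2\pi|m|/d$, and the estimate \eqref{esbound1} only used $\chi_m\ge 2\pi|m|/d$, which is now an equality; hence the same choice of $M$, namely \eqref{mpk0s_lap} (which coincides with \eqref{mpk0s}), gives truncation error $O(\eps)$. The $m=0$ term is retained exactly.

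The substantive step is the $m=0$ contribution, which is individually singular as $\beta\to 0$; it is handled using charge neutrality \eqref{chargeneutral} and the fact that the Poisson potential is determined only up to an additive constant. Writing $s=y-y'_j$ and Laurent-expanding,
\be
\frac{e^{-\beta s}}{\beta}=\frac{1}{\beta}-s+\frac{\beta s^{2}}{2}+O(\beta^{2}),\qquad
\frac{e^{-2\beta\eta}}{1-e^{-\beta\eta}}=\frac{1}{\beta\eta}-\frac{3}{2}+O(\beta),
\ee
so the $m=0$ summand equals
\be
\frac{1}{2d}\left(\frac{1}{\beta^{2}\eta}-\frac{3}{2\beta}-\frac{s}{\beta\eta}+\frac{s^{2}}{2\eta}+\frac{3s}{2}+c+O(\beta)\right),
\ee
with $c$ a constant independent of $\bt$ and $\bs$. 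Now apply this --- together with the analogous expansion of the north $m=0$ summand $\tfrac{1}{2d}\,e^{\beta s}\beta^{-1}(1-e^{-\beta\eta})^{-1}e^{-2\beta\eta}$, whose $\beta^{-1}$ part includes $+\tfrac{1}{2d}\,s/(\beta\eta)$ --- to a charge-neutral vector $\bq$ and let $\beta\to 0$. Using $s=y-y'_j$ and $s^{2}=y^{2}-2yy'_j+(y'_j)^{2}$: every summand piece that is independent of $j$ (the $\beta^{-2}$ term, the part $-y/(\beta\eta)$, and the remainders $y^{2}/(2\eta)$, $3y/2$, $c$) is annihilated by $\sum_j q_j=0$; every piece that is independent of the target $\bt$ (the remainders $(y'_j)^{2}/(2\eta)$ and $-3y'_j/2$) only shifts the potential by a source-dependent constant and may be discarded; and the one remaining $\beta^{-1}$-singular, $j$-dependent term $+y'_j/(\beta\eta)$ from $K_2^{south}$ is exactly cancelled by the $-y'_j/(\beta\eta)$ term from $K_2^{north}$, so that the symmetric split recorded in \eqref{lowranksouth3_lap} puts no singularity into $\P_2^{south}$. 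What survives of the $m=0$ summand is the cross term $-yy'_j/(2d\eta)$; with $\bt=\bt_l$, $\bs=\bs_j$ this is precisely the rank-one contribution $y_l\cdot(-1/(2d\eta))\cdot y'_j=\bL^{south}(l,0)\,\bD^{south}(0,0)\,\bR^{south}(0,j)$ of \eqref{lowranksouth3_lap}.

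Combining the $m=0$ and $m\neq 0$ parts then gives $\P_2^{south}=\bL^{south}\bD^{south}\bR^{south}+O(\eps)$ on charge-neutral inputs, the identity being understood modulo the shared additive constant and the $\beta^{-1}$-singular piece that cancels against $\P_2^{north}$. I expect the main obstacle to be exactly this zero-mode bookkeeping: one must verify that after the $\beta\to 0$ limit precisely one target- and source-dependent term remains once the $\beta$-singularities and the target-independent constants are stripped away, and that those $\beta$-singular pieces split consistently between the ``south'' and ``north'' operators. Everything else --- the $m\neq 0$ limits and the truncation estimate --- follows directly from \cref{south_thm}.
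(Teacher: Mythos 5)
Your proposal is correct and follows essentially the same route as the paper: take the $\beta\to 0$ limit of the modified Helmholtz factorization, with the $m\neq 0$ modes and the truncation bound carrying over verbatim, and the $m=0$ mode handled via charge neutrality. The only difference is bookkeeping for the zero mode --- the paper differentiates $u_{south}$ in $y$, takes the limit, and integrates back ``up to an arbitrary constant,'' whereas you Laurent-expand directly and explicitly track the discarded target-independent constants (including the $\beta^{-1}$ piece that cancels against the north operator); the two devices yield the same $-y_l\,y'_j/(2d\eta)$ cross term, and your version is, if anything, slightly more explicit about the zero-mode singular structure.
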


\begin{proof}
For all modes $m \neq 0$, this result follows directly from taking the 
limit $\beta \rightarrow 0$ in the corresponding term for 
the modified Helmholtz equation. 
For $m=0$, the relevant contribution to 
$K_2^{south}(\bt,\bs_j)$ in \cref{pk0s}  is
\[ \frac{1}{2d} \frac{1}{\beta} 
\frac{e^{-2\beta \eta}}{1-e^{-\beta\eta}}
e^{-\beta(y-y'_j)}.
\] 
Letting $u_{south}$ denote the field due to all sources in the ``south"
image cells and
summing over all sources yields
\be\label{phis0}
u_{south}(\bt) =\frac{1}{2d} 
\frac{1}{\beta} \sum_{j=1}^{N_S} 
\frac{e^{-2\beta \eta}}{1-e^{-\beta\eta}}
e^{-\beta(y-y'_j)} q_j.
\ee
Differentiating both sides of \eqref{phis0} with respect to $y$, we have
\be\label{phis1}
\frac{\partial u_{south}(x,y)}{\partial y}
=
-\frac{1}{2d}
\frac{e^{-2\beta \eta}}{1-e^{-\beta \eta}}
\sum_{j=1}^{N_S}e^{-\beta(y-y'_j)}q_j .
\ee
Taylor expansion of the various terms yields:
\[
\frac{\partial u_{south}(x,y)}{\partial y}
= - \frac{1}{2d}  \, 
\frac{ 1 - O(\beta)}{\beta \eta + O(\beta^2)} \, 
e^{-\beta y} \, \left[ 
\sum_{j=1}^{N_S} q_j + \beta 
\sum_{j=1}^{N_S} q_j y_j' \right]
\]
Taking the limit $\beta\rightarrow 0$ and using charge neutrality
\cref{chargeneutral},
we obtain
\be\label{phis2}
\lim_{\beta\rightarrow 0}\frac{\partial u_{south}(x,y)}{\partial y}
=-\frac{1}{2 d\eta}\sum_{j=1}^{N_S}y_j q_j.
\ee
Hence, $u_{south}(x,y)$ is given by
\be\label{phis3}
\lim_{\beta\rightarrow 0} u_{south}(x,y)
=-\frac{1}{2 d\eta}\left(\sum_{j=1}^{N_S}y_j q_j\right) y
\ee
up to an arbitrary constant, completing the derivation.
\end{proof}
It is easy to verify the following.

\begin{corollary} \label{north_thm_lap}
Under the hypotheses of \cref{south_thm_lap},
let 
$\bL^{north} \in \mathbb{C}^{N_T \times (2M+1)}$ and 
$\bR^{north} \in \mathbb{C}^{(2M+1) \times N_S}$ be dense matrices
with
\be\label{lowranknorth3_lap}
\begin{split}
&\bL^{north}(l,m) \ \ =
e^{|\alpha_m| y_l} e^{i \alpha_m x_l} \quad {\rm for}\ m \neq 0, \\
&\bL^{north}(l,0) \ \ \ =  
y_l, \\
&\bR^{north}(m,j)\  =  
e^{-|\alpha_m| y'_j} \, e^{- i \alpha_m x'_j} \quad {\rm for}\ m \neq 0, \\
&\bR^{north}(0,j) \ \ =   
y'_j. 
\end{split}
\ee
Then
\[
\P_2^{north} = \bL^{north} \, \bD^{north} \, \bR^{north} + O(\epsilon),
\]
where $\bD^{north}=\overline{\bD^{south}}$.
\end{corollary}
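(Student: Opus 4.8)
The plan is to mirror the proof of \cref{south_thm_lap}, with the ``south'' tiling $\{n\le -2\}$ replaced by the ``north'' tiling $\{n\ge 2\}$; equivalently, and more economically, to pass to the limit $\beta\to 0$ in the modified Helmholtz ``north'' corollary following \cref{south_thm}. That corollary already records the factorization $\P_2^{north}=\bL^{north}\,\bD^{north}\,\bR^{north}+O(\epsilon)$ with $\bD^{north}=\overline{\bD^{south}}$, $\bL^{north}(l,m)=e^{\chi_m y_l}e^{i\alpha_m x_l}$, $\bR^{north}(m,j)=e^{-\chi_m y'_j}e^{-i\alpha_m x'_j}$, and $\chi_m=\sqrt{\alpha_m^2+\beta^2}$; under the hood it rests on the $y<0$ branch of the Sommerfeld representation \eqref{k0planewave} (the valid branch, since every ``north'' image source lies above the target) and on the geometric sum over $n\ge 2$, which now contributes $e^{-n(\chi_m\eta+i\alpha_m\xi)}=e^{-n\overline{Q_m}}$ rather than $e^{-nQ_m}$, accounting for the conjugation.

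First I would handle the nonzero modes $m\ne 0$. Here $\chi_m\to|\alpha_m|$ as $\beta\to 0$, so each entry of $\bL^{north}$, $\bR^{north}$ and $\bD^{north}$ converges termwise to the expression in \eqref{lowranknorth3_lap}: in particular $\tfrac{1}{2d\chi_m}\to\tfrac{1}{2d|\alpha_m|}=\tfrac{1}{4\pi|m|}$ and $Q_m\to|\alpha_m|\eta-i\alpha_m\xi$ as in \eqref{alphaqdef}, and since $\chi_m$ is real, $\bD^{north}(m,m)=\tfrac{1}{4\pi|m|}\tfrac{e^{-2\overline{Q_m}}}{1-e^{-\overline{Q_m}}}=\overline{\bD^{south}(m,m)}$, matching the statement. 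The tail estimate \eqref{esbound1} of the modified Helmholtz analysis is uniform in $\beta\ge 0$ for these modes (it uses only $\chi_m\ge 2\pi|m|/d$), so truncating at $|m|=M$ with $M$ as in \eqref{mpk0s_lap} still yields $O(\epsilon)$ accuracy.

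The one step requiring genuine care — and the main obstacle — is the zero mode $m=0$, whose contribution to $K_2^{north}$ carries a prefactor $1/\beta$ that is singular as $\beta\to 0$ and must be tamed by charge neutrality, exactly as in \cref{south_thm_lap}. Isolating that term, the field due to the ``north'' images is $u_{north}(\bt)=\tfrac{1}{2d}\tfrac{1}{\beta}\tfrac{e^{-2\beta\eta}}{1-e^{-\beta\eta}}\sum_{j}e^{\beta(y-y'_j)}q_j$; differentiating in $y$ cancels the $1/\beta$, and a Taylor expansion in $\beta$ together with $\sum_j q_j=0$ gives $\lim_{\beta\to 0}\partial_y u_{north}=-\tfrac{1}{2d\eta}\sum_j y'_j q_j$, which is independent of $(x,y)$. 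Integrating (the constant of integration being absorbed into the undetermined additive constant of the potential) yields $u_{north}(x,y)=-\tfrac{1}{2d\eta}\bigl(\sum_j y'_j q_j\bigr)y$, i.e.\ $\bL^{north}(l,0)=y_l$, $\bR^{north}(0,j)=y'_j$, $\bD^{north}(0,0)=-\tfrac{1}{2d\eta}$ — real, hence equal to $\overline{\bD^{south}(0,0)}$. Assembling the $m\ne 0$ and $m=0$ pieces gives $\bD^{north}=\overline{\bD^{south}}$ and $\P_2^{north}=\bL^{north}\,\bD^{north}\,\bR^{north}+O(\epsilon)$, as claimed.
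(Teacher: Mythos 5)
Your proposal is correct and follows exactly the route the paper intends (the corollary is stated as an easy verification mirroring \cref{south_thm_lap}): take the $\beta\to 0$ limit of the modified Helmholtz ``north'' factorization, noting the geometric sum over $n\ge 2$ produces $\overline{Q_m}$ and hence $\bD^{north}=\overline{\bD^{south}}$, and handle the $m=0$ mode by charge neutrality just as in the south case. Your zero-mode computation indeed reproduces $\bD^{north}(0,0)=-\tfrac{1}{2d\eta}$, so nothing is missing.
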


Similar care needs to be taken when deriving the east and west
formulas in the limit $\beta \rightarrow 0$.

\begin{theorem} \label{west_thm_lap}
Let $\bS = \{ \bs_j \, | \, j = 1,\dots,N_S \}$, $\bq = (q_1,\dots,q_{N_S})$,
and $\bT = \{ \bt_l \, | \, l = 1,\dots,N_T \}$ 
denote collections of source locations, charge strengths, and targets
in the unit cell $\C$, with $\sum_{j=1}^{N_S} q_j = 0$.
Given a precision $\epsilon$, let $N^1_q(d,\eta),
N^2_q(d,\eta)$ denote the 
number of points needed in the numerical quadrature 
for the $K_1^{west}$ and $K_2^{west}$ kernels (see \cref{k1lap}
and \cref{k2lap} below),
with weights and nodes $\{ w_{n,1},\lambda_{n,1} \}$
$\{ w_{n,2},\lambda_{n,2} \}$, respectively. 
Let 
$\bL_1^{west}, \bL_1^{east} \in \mathbb{C}^{N_T \times N^1_q}$,
$\bL_2^{west}, \bL_2^{east} \in \mathbb{C}^{N_T \times N^2_q}$,
$\bR_1^{west}, \bR_1^{east} \in \mathbb{C}^{N^1_q \times N_S}$,
$\bR_2^{west}, \bR_2^{east} \in \mathbb{C}^{N^2_q \times N_S}$
be dense matrices
and let $\bD_1^{e/w}$, $\bD_2^{west}$ and $\bD_2^{east}$ 
be diagonal matrices of dimension $N^1_q$ and $N^2_q$, respectively,
with
\be\label{lowrankwest_lap}
\ba
\bL_1^{west}(l,n) &=
e^{-\lambda_{n,1} x_l} e^{i \lambda_{n,1} y_l} \, , \ 
\bL_2^{west}(l,n) =
e^{-\lambda_{n,2} x_l} e^{i \lambda_{n,2} y_l} \, , \\
\bL_1^{east}(l,n) &=
e^{\lambda_{n,1} x_l} e^{i \lambda_{n,1} y_l} \, , \ 
\bL_2^{east}(l,n) =
e^{\lambda_{n,2} x_l} e^{i \lambda_{n,2} y_l} \, , \\
\bR_1^{west}(n,j) &=  
e^{\lambda_{n,1} x'_j} \, e^{-i \lambda_{n,1} y'_j} \, , \ 
\bR_2^{west}(n,j) =  
e^{\lambda_{n,2} x'_j} \, e^{-i \lambda_{n,2} y'_j} \, , \\
\bR_1^{east}(n,j) &=  
e^{-\lambda_{n,1} x'_j} \, e^{-i \lambda_{n,1} y'_j} \, , \ 
\bR_2^{east}(n,j) =  
e^{-\lambda_{n,2} x'_j} \, e^{-i \lambda_{n,2} y'_j} \, , \\
\bD_1^{e/w}(n,n) &= \frac{1}{2\pi} \frac{w_{n,1}}{\lambda_{n,1}}
\frac{e^{-2\lambda_{n,1} d}}
{1 - e^{-\lambda_{n,1} d}} \, , \\
\bD_2^{west}(n,n) &= \frac{1}{2\pi}  \frac{w_{n,2}}{\lambda_{n,2}} 
\frac{e^{-4\lambda_{n,2} d}}
{1 - e^{-\lambda_{n,2} d}} 
[ e^{-\lambda_{n,2} \xi + i \lambda_{n,2} \eta} +
  e^{\lambda_{n,2} \xi - i \lambda_{n,2} \eta} + 1],
\ea
\ee
and $\bD_2^{east}=\overline{\bD_2^{west}}$.
Let
\[
\ba
\P_1^{west} &= \bL_1^{west} \, \bD_1^{e/w} \, \bR_1^{west}, \\
\P_2^{west} &= \bL_2^{west} \, \bD_2^{west} \, \bR_2^{west}. \\
\P_1^{east} &= \bL_1^{east} \, \bD_1^{e/w} \, \bR_1^{east}, \\
\P_2^{east} &= \bL_2^{east} \, \bD_2^{east} \, \bR_2^{east}. \\
\ea
\]
Then the real parts of the vectors 
\[  \P_1^{west} \bq, \ \P_1^{east} \bq, \ 
  P_2^{west} \bq, \P_2^{east} \bq 
\]
denote the contributions from the west or east sources to the 
corresponding periodizing potentials.
\end{theorem}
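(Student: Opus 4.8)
The plan is to obtain \cref{west_thm_lap} as the $\beta\to 0$ limit of \cref{west_thm} and its corollary, in exact parallel with the way \cref{south_thm_lap} was derived from \cref{south_thm}. I would begin from the Sommerfeld representations \eqref{pylw0} and \eqref{pylw02} for the modified Helmholtz west/east kernels, rewrite each over the half line $[0,\infty)$ as in \eqref{tk0west}, and replace $\sqrt{\lambda^2+\beta^2}$ by $|\lambda|$ throughout. This produces the Poisson kernels $K_1^{west}$ and $K_2^{west}$ recorded in \eqref{k1lap} and \eqref{k2lap}, and separating the target-dependent, node-dependent, and source-dependent factors of the resulting quadrature-discretized integrands reads off the matrices $\bL_1^{west},\bL_2^{west},\bR_1^{west},\bR_2^{west},\bD_1^{e/w},\bD_2^{west}$ (and their east analogues) in the statement.

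The one genuine difficulty is that, unlike the modified Helmholtz case, the integrand is no longer integrable at $\lambda=0$: the factor $\frac{1}{\lambda}\,\frac{e^{-2\lambda d}}{1-e^{-\lambda d}}$ behaves like $\frac{1}{\lambda^2 d}$ as $\lambda\to 0^+$, so $K_1^{west}(\bt,\bs)$ and $K_2^{west}(\bt,\bs)$ diverge for an individual source --- as they must, since the periodic Poisson problem is well-posed only under charge neutrality and only up to an additive constant. The key step is therefore to analyze the $\lambda\to 0$ contribution of the vector $\P_1^{west}\bq$, and of its $\P_1^{east}$, $\P_2^{west}$, $\P_2^{east}$ counterparts, after imposing \eqref{chargeneutral}. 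Expanding the source factor $e^{\lambda x'_j}e^{-i\lambda y'_j}$ in powers of $\lambda$ and summing against $\bq$: the $O(1)$ term vanishes by $\sum_j q_j = 0$; the surviving $O(\lambda)$ term is proportional to the complex dipole moment $\sum_j q_j(x'_j - i y'_j)$ and is independent of the target $\bt$; the $O(\lambda^2)$ and higher terms are $\bt$-dependent but harmless. Multiplying by $\frac{1}{\lambda^2 d}$ then shows that the singular part of $\P_1^{west}\bq$ near $\lambda=0$ is a target-independent constant times a logarithmically divergent integral, which is discarded as part of the arbitrary additive constant; after subtracting it the integrand is smooth and exponentially decaying (the decay in $\lambda$ coming precisely from the separation of the west/east images from $\C$). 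This is the continuous analogue of the $m=0$ mode in the proof of \cref{south_thm_lap}: there the charge-neutrality cancellation of the leading $1/\beta$ singularity left a finite, $\bt$-dependent remainder that had to be recorded as $\bD^{south}(0,0)$, whereas here it leaves only a removable constant, with any finite $\bt$-dependent contribution from the $\lambda\to 0$ region absorbed automatically by the quadrature.

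With that contribution understood, the remainder is a quadrature estimate identical in spirit to those in \cref{sec:analysis}. For the doubly periodic operators, after the rescaling $\lambda\mapsto\lambda/d$ the arguments satisfy $(x,y)\in[1,7]\times[-2,2]$ and one applies the generalized Gaussian quadrature of \cref{sec:ggq} specialized to $\beta=0$ (the $18$- and $29$-node rules quoted there), giving $N^2_q(d,\eta)$ nodes and weights $\{w_{n,2},\lambda_{n,2}\}$; for the singly periodic operators one applies the graded/dyadic Gauss--Legendre scheme of that section, with the innermost dyadic refinement now resolving the algebraic behavior at $\lambda=0$ rather than a small $\beta$, yielding $N^1_q(d,\eta)$. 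Inserting these nodes and weights and bundling the exponential prefactors gives $\P_1^{west}=\bL_1^{west}\bD_1^{e/w}\bR_1^{west}$, and likewise for $\P_2^{west}$, with error $O(\eps)$; taking real parts recovers the physical potentials. The east identities follow from the west ones by the reflection $x'\mapsto -x'$, together with $x\mapsto -x$ in the target factor, which exchanges $e^{-\lambda x}\leftrightarrow e^{\lambda x}$, leaves $\bD_1^{e/w}$ unchanged, and conjugates $\bD_2^{west}$, so that $\bD_2^{east}=\overline{\bD_2^{west}}$. I expect the $\lambda\to 0$ analysis --- making the charge-neutrality cancellation precise and pinning down which $\bt$-dependent pieces are carried by the quadrature versus the discarded constant --- to be the only real obstacle; everything else duplicates the limiting and quadrature arguments already used for the modified Helmholtz west/east kernels and for the Poisson south/north kernels.
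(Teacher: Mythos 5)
Your proposal follows essentially the same route as the paper: take the $\beta\to 0$ limit of the modified Helmholtz west/east representations, observe that charge neutrality forces $M_1(\lambda)=O(\lambda)$ so the apparent $1/\lambda^2$ singularity at the origin is tamed, and then apply the generalized Gaussian (respectively graded Gauss--Legendre) quadrature to the resulting integrands, with the east case obtained by the reflection $x\mapsto -x$, $x'\mapsto -x'$. The only difference is cosmetic: you dispose of the residual $1/\lambda$ piece by absorbing the target-independent, logarithmically divergent contribution into the arbitrary additive constant of the potential, whereas the paper notes that physical quantities such as the gradient introduce a second factor of $\lambda$ so that the quadrature only ever acts on smooth integrands --- the same observation phrased differently.
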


\begin{proof}
Focusing on the ``west" sources, 
the formulas themselves follow directly from the modified 
Helmholtz case, letting $\beta \rightarrow 0$ in
\cref{tk0west} and applying generalized Gaussian quadrature.
As noted in \cref{sec:ggq}, however, the quadrature rule is now
being used to evaluate an integral of the apparent form
\be
\label{k1lap}
\int_{0}^{\infty}\frac{e^{-\lambda x} e^{i \lambda y}}{\lambda}
M_1(\lambda) 
\frac{e^{-2\lambda d}}{1-e^{-\lambda d}}
d\lambda
\ee
for the singly periodic case or
\be
\label{k2lap}
\int_{0}^{\infty}\frac{e^{-\lambda x} e^{i \lambda y}}{\lambda}
M_1(\lambda) 
\frac{e^{-4\lambda d}}{1-e^{-\lambda d}}
[ e^{-\lambda \xi + i \lambda \eta} +
  e^{\lambda \xi - i \lambda \eta} + 1]
d\lambda
\ee
for the doubly periodic case.
Since $\frac{e^{-2\lambda d}}{1-e^{-\lambda d}}$ and 
$\frac{e^{-4\lambda d}}{1-e^{-\lambda d}}$ are both of the order
$O(\frac{1}{\lambda})$ as $\lambda\rightarrow 0$, the integrals appear to be strongly singular,
with a $\frac{1}{\lambda^2}$ singularity at the origin.
However, in applying the periodizing operator, we are limiting ourselves
to charge neutral distributions, so that
\[ M_1(\lambda) = \sum_{j=1}^{N_S} q_j e^{\lambda (x_j - i y_j)}
= O(\lambda).
\]
Moreover, in computing any physical
quantity, such as the gradient of the potential, a second factor of
$\lambda$ is introduced in the numerator and thus, the generalized
Gaussian quadrature rule is only being applied to integrals of the form
\[ 
\int_{0}^{\infty}e^{-\lambda x} e^{i \lambda y}
M_{2}(\lambda) \, d\lambda,
\]
where $M_{2}(\lambda)$ is smooth.
The analysis for the ``east" sources is identical. 
\end{proof}

\section{Periodizing operator for the modified Stokes equations}
\label{sec:stokes}

The modified Stokeslet is the fundamental solution to the
modified Stokes equations
\be
\ba
(\beta^2-\Delta)\bu + \nabla p&=0\\
\nabla \cdot \bu&=0
\ea
\ee
and is given at $\bt = (x,y)$ by
\be
\G^{(\rm MS)}(\bt)=(\nabla \otimes \nabla-\Delta\bI)G_{\rm MB}(\bt) =
\begin{pmatrix}
-\partial_{yy} & \partial_{xy}  \\
\partial_{xy} & -\partial_{xx}  
\end{pmatrix}
G_{\rm MB}(\bt)
\label{msgreen}
\ee
where 
\be
G_{\rm MB}(\bt)=-\frac{1}{2\pi \beta^2}
\left[K_0(\beta |\bt|)- \log(1/|\bt|)\right].
\ee
This is the fundamental solution for the modified biharmonic equation:
\be
\Delta(\Delta-\beta^2)G_{\rm MB}(\bt)=
\delta(\bt).
\ee
Taking the Fourier transform of both sides yields
the representation
\be
G_{\rm MB}(\bt)=\frac{1}{4\pi^2}\int_{-\infty}^\infty\int_{-\infty}^\infty
\frac{1}{(k_1^2+k^2_2)(\beta^2+k_1^2+k^2_2)}
e^{i(k_1 x+k_2y)}dk_1dk_2.
\label{mbfourier}
\ee
Substituting \eqref{mbfourier} into \eqref{msgreen}, we obtain
the Fourier representation of the modified Stokeslet:
\be
\G^{(\rm MS)}_{ij}(\bt)=\frac{1}{4\pi^2}\int_{-\infty}^\infty\int_{-\infty}^\infty
\frac{(k_1^2+k_2^2)\delta_{ij}-k_ik_j}{(k_1^2+k^2_2)(\beta^2+k_1^2+k^2_2)}
e^{i(k_1 x+k_2y)}dk_1dk_2.
\ee
We now extend Sommerfeld's method to derive a plane-wave
expansion for the modified Stokeslet
(valid for $x>0$) by contour integration in the $k_1$
variable and the residue theorem.

For this, note that in the complex $k_1$-plane, the integrand has
four poles, namely $\pm i\sqrt{\beta^2+k_2^2}$ and $\pm i |k_2|$.
Under the assumption that $x>0$, 
consider the closed contour from
$-R$ to $R$ along the real $k_1$ axis and returning along a 
semicircle of radius $R$
in the upper half of the complex plane. The integral
along the semicircle clearly vanishes
as $R\rightarrow \infty$, since 
\[ |e^{i k_1 x}|\le 1 \, , \quad
\frac{1}{\beta^2+R^2e^{2i\theta}+k_2^2}\rightarrow 0 \quad 
{\rm as}\ R\rightarrow \infty
\]
and the remaining terms in the integrand are bounded by $2$.
From the residue theorem, it follows that 
the integral is due to the residues at the two poles
$i\sqrt{\beta^2+k_2^2}$ and $i|k_2|$ that lie within the contour, 
leading to:
\be
\ba
\G^{(\rm MS)}(\bt)&=\frac{1}{4\pi\beta^2}\int_{-\infty}^\infty
\frac{e^{-\sqrt{\beta^2+\lambda^2}x}}{\sqrt{\beta^2+\lambda^2}} e^{i\lambda y}
\begin{bmatrix}-\lambda^2 & i\lambda\sqrt{\beta^2+\lambda^2}\\
i\lambda\sqrt{\beta^2+\lambda^2}& \beta^2+\lambda^2\end{bmatrix}d\lambda\\
&+\frac{1}{4\pi\beta^2}\int_{-\infty}^\infty e^{-|\lambda|x+i\lambda y}
\begin{bmatrix} |\lambda| & -i\lambda \\
-i\lambda & -|\lambda| \end{bmatrix} d\lambda, \qquad x>0.
\ea
\label{mspwexpeast}    
\ee
The plane-wave expansions for $x<0, y>0$, and $y < 0$ 
are obtained similarly. Here, we have renamed the $k_2$ Fourier variable 
as $\lambda$ to be consistent with our earlier notation.

\subsection{Low rank factorization}

The periodizing operators $\P_1$ and $\P_2$ can be constructed
by the same method as for the modified Helmholtz equation:
\be\label{splitting_ms}
\ba
\P_1 &= \P_1^{west} + \P_1^{east}, \\
\P_2 &= \P_2^{west} + \P_2^{east} + 
\P_2^{south} + \P_2^{north}.  
\ea
\ee

For a source to the ``south", we have
\be\label{pmss0}
\ba
\K_2^{south}(\bt,\bs_j)&=\sum_{n=-\infty}^{-2}\sum_{m=-\infty}^\infty
\G^{(\rm MS)}(\bt-(\bs+\bl_{mn}))\\
&=\frac{1}{4\pi\beta^2}\sum_{n=-\infty}^{-2}\sum_{m=-\infty}^\infty
\left\{\int_{-\infty}^\infty
\frac{e^{-\sqrt{\beta^2+\lambda^2}(y-y'_j-n\eta)}}{\sqrt{\beta^2+\lambda^2}}
e^{i\lambda(x-x'_j-md-n\xi)}\right.\\
& \qquad \qquad \qquad \qquad \qquad \cdot
\begin{bmatrix} \beta^2+\lambda^2 & i\lambda\sqrt{\beta^2+\lambda^2}\\
  i\lambda\sqrt{\beta^2+\lambda^2} & -\lambda^2
\end{bmatrix}
d\lambda\\
&\,\,\,\,\,\,\left.+\int_{-\infty}^\infty 
e^{-|\lambda|(y-y'_j-n\eta)+i\lambda(x-x'_j-md-n\xi)}
\begin{bmatrix} -|\lambda| & -i\lambda\\ -i\lambda& |\lambda| \end{bmatrix}
d\lambda\right\}.
\ea
\ee

Following the same procedure used for the modified Helmholtz
equation above, we obtain 
\be\label{pmss1}
\ba
\K_2^{south}(\bt,\bs_j)&=\frac{1}{2d\beta^2}\sum_{m=-\infty}^\infty\left(
\begin{bmatrix} \chi_m & i\alpha_m\\ i\alpha_m & -\frac{\alpha_m^2}{\chi_m} \end{bmatrix}
\frac{e^{-2Q^{(\beta)}_m}}{1-e^{-Q^{(\beta)}_m}} e^{-\chi_m(y-y'_j)+i\alpha_m(x-x'_j)}\right.\\
&\qquad\qquad\quad \left.
+\begin{bmatrix} -|\alpha_m| & -i\alpha_m\\ -i\alpha_m  & |\alpha_m| \end{bmatrix}
\frac{e^{-2Q_m}}{1-e^{-Q_m}} e^{-|\alpha_m|(y-y'_j)+i\alpha_m(x-x'_j)}\right),
\ea
\ee
where
\be \label{chialphadefms}
 Q^{(\beta)}_m=\chi_m\eta-i\alpha_m\xi, \quad 
Q_m=|\alpha_m|\eta-i\alpha_m\xi,
\ee
with $\alpha_m$, $\chi_m$ given in \cref{chialphadef}.
This establishes 
\begin{theorem} \label{south_thm_ms}
Let $\bS = \{ \bs_j \, | \, j = 1,\dots,N_S \}$ and 
$\bT = \{ \bt_l \, | \, l = 1,\dots,N_T \}$ 
denote collections of sources and targets
in the unit cell $\C$ and let $\P_2^{south}$ denote the 
$N_T \times N_S$ block matrix with
$\P_2^{south} (l,j) = \K_2^{south}(\bt_l,\bs_j)$. 
Given a precision $\epsilon$, let $M$ be given by
\cref{mpk0s}.
For $m=-M.\dots,M$, let
$Q^{(\beta)}_m, Q_m$ be given by \cref{chialphadefms} and let
$\alpha_m, \chi_m$ be given by \cref{chialphadef}.
Let 
$\bL_\beta^{south}, \bL^{south}, \bL_\beta^{north}, \bL^{north} \in \mathbb{C}^{2N_T \times 2(2M+1)}$ and 
$\bR_\beta^{south}, \bR^{south}, \bR_\beta^{north}, \bR^{north} \in \mathbb{C}^{2(2M+1) \times 2N_S}$ 
be dense $N_T \times (2M+1)$ 
and $(2M+1) \times N_S$ 
block matrices, respectively, with $2\times 2$ blocks,
let $\bD_\beta^{south}, \bD^{south}, \bD_\beta^{north}, \bD^{north} \in \mathbb{C}^{2(2M+1) \times 2(2M+1)}$
be $(2M+1)\times (2M+1)$ block diagonal matrices with $2\times 2$ diagonal
blocks, let $\bI_2$ denote the identity matrix of size 2, and let
\be\label{lowranksouth3ms}
\ba
\bL_\beta^{south}(l,m) &=
e^{-\chi_m y_l} e^{i \alpha_m x_l} \bI_2 , \quad 
\bL_\beta^{north}(l,m) =
e^{\chi_m y_l} e^{i \alpha_m x_l} \bI_2 \, , \\ 
\bL^{south}(l,m) &=
e^{-|\alpha_m| y_l} e^{i \alpha_m x_l} \bI_2 , \quad
\bL^{north}(l,m) =
e^{|\alpha_m| y_l} e^{i \alpha_m x_l} \bI_2 \, , \\
\bR_\beta^{south}(m,j) &=
e^{\chi_m y'_j} e^{-i \alpha_m x'_j} \bI_2 , \quad
\bR_\beta^{north}(m,j) =
e^{-\chi_m y'_j} e^{-i \alpha_m x'_j} \bI_2 \, , \\
\bR^{south}(m,j) &=
e^{|\alpha_m| y'_j} e^{-i \alpha_m x'_j} \bI_2, \quad
\bR^{north}(m,j) =
e^{-|\alpha_m| y'_j} e^{-i \alpha_m x'_j} \bI_2 \, , \\
\bD_\beta^{south}(m,m) &=
\frac{1}{2\beta^2 d} 
\frac{e^{-2Q^{(\beta)}_m}}{1 - e^{-Q^{(\beta)}_m}} \, 
\begin{pmatrix}
\chi_m & i \alpha_m \\
i \alpha_m & -\alpha_m^2/\chi_m 
\end{pmatrix} \, , \quad \bD_\beta^{north}(m,m) =\overline{\bD_\beta^{south}(m,m)}\, ,\\
\bD^{south}(m,m) &=
\frac{1}{2\beta^2 d} 
\frac{e^{-2Q_m}}{1 - e^{-Q_m}} \, 
\begin{pmatrix}
-|\alpha_m| & -i \alpha_m \\
-i \alpha_m & |\alpha_m| 
\end{pmatrix} \, , \quad \bD^{north}(m,m) =\overline{\bD^{south}(m,m)}\, .
\ea
\ee
Then
\[
\ba
\P_2^{south} &= \bL_\beta^{south} \, \bD_\beta^{south} \, \bR_\beta^{south} +
 \bL^{south} \, \bD^{south} \, \bR^{south} + O(\epsilon), \\
\P_2^{north} &= \bL_\beta^{north} \, \bD_\beta^{north} \, \bR_\beta^{north} +
 \bL^{north} \, \bD^{north} \, \bR^{north} + O(\epsilon). 
\ea
\]
\end{theorem}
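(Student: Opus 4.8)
The plan is to mirror the proof of \cref{south_thm} essentially verbatim, the only new ingredient being the $2\times 2$ block structure inherited from the plane-wave expansion \eqref{mspwexpeast} of the modified Stokeslet. I would start from the already-derived identity \eqref{pmss1}, which writes $\K_2^{south}(\bt,\bs_j)$ as a doubly infinite sum over $m$ of two matrix-valued terms: a ``modified'' term built from $\chi_m,\alpha_m,\alpha_m^2/\chi_m$ and the ratio $e^{-2Q^{(\beta)}_m}/(1-e^{-Q^{(\beta)}_m})$, and a ``harmonic'' term built from $|\alpha_m|,\alpha_m$ and $e^{-2Q_m}/(1-e^{-Q_m})$. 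Each of these is structurally the modified-Helmholtz south kernel \eqref{pk0s} (respectively its $\beta\to 0$ analogue) up to the matrix prefactor and the constant $1/(2d\beta^2)$.

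Next I would truncate both sums at $|m|\le M$ with $M$ from \eqref{mpk0s} and simply read off the factorization: the $\bt$-dependent scalar exponentials times $\bI_2$ give $\bL_\beta^{south}$ and $\bL^{south}$, the $\bs_j$-dependent exponentials times $\bI_2$ give $\bR_\beta^{south}$ and $\bR^{south}$, and everything depending only on $m$ — the $1/(2d\beta^2)$ prefactor, the geometric-sum ratios, and the $2\times 2$ coefficient matrices — is packaged into the block-diagonal matrices $\bD_\beta^{south},\bD^{south}$. Matching these against \eqref{lowranksouth3ms} is then purely bookkeeping, and no structural novelty arises here.

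The one place requiring a little care, and the main (minor) obstacle, is the truncation bound. The estimates from the proof of \cref{south_thm} — namely $\chi_m\ge 2\pi|m|/d$, $|e^{-\chi_m(\eta+y-y_j')}|\le 1$ since $|y|,|y_j'|\le\eta/2$, $|e^{-Q^{(\beta)}_m}|\le e^{-\chi_m\eta}$, and $|1-e^{-Q^{(\beta)}_m}|\ge 1-e^{-\chi_m\eta}$, with the same bounds for $|\alpha_m|$ — carry over directly, but now the $2\times 2$ coefficient matrices have operator norm $O(\chi_m)$ rather than the $O(1/\chi_m)$ decay present in the scalar case, so each tail term is bounded by a $\beta$-dependent constant times $\chi_m\,e^{-\chi_m\eta}/(1-e^{-\chi_m\eta})$. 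I would then sum this geometric-type series exactly as in \eqref{esbound1}; the extra polynomial factor $\chi_m$ does not spoil the exponential decay, and since $\beta$ is bounded away from zero the constant $1/\beta^2$ is harmless, so the choice of $M$ in \eqref{mpk0s} still yields an $O(\epsilon)$ remainder (the mild polynomial-in-$M$ overhead being absorbed into the logarithm already appearing there).

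Finally, for the ``north'' operator I would invoke the reflection $y\mapsto -y$, $y_j'\mapsto -y_j'$, $n\mapsto -n$ in the derivation of \eqref{pmss1}: this fixes $\alpha_m$ and $\chi_m$, conjugates $Q^{(\beta)}_m$ and $Q_m$, and flips the signs in the $y$-exponents, which is precisely the content of the stated formulas for $\bL_\beta^{north},\bR_\beta^{north},\bL^{north},\bR^{north}$ and $\bD_\beta^{north}=\overline{\bD_\beta^{south}}$, $\bD^{north}=\overline{\bD^{south}}$; the truncation estimate is unchanged, completing the argument.
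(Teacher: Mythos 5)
Your proposal is correct and follows essentially the same route as the paper: the paper itself establishes the theorem by deriving \eqref{pmss1} from the plane-wave representation \eqref{mspwexpeast} via Poisson summation and geometric summation in $n$, and then reading off the block factorization after truncating at $|m|\le M$ from \eqref{mpk0s}. Your added observations — that the $2\times 2$ coefficient blocks grow like $\chi_m$ so the tail bound picks up a harmless polynomial factor (and a $1/\beta^2$ constant), and that the ``north'' case follows by the reflection/conjugation symmetry yielding $\bD^{north}_\beta=\overline{\bD^{south}_\beta}$, $\bD^{north}=\overline{\bD^{south}}$ — only make explicit details the paper leaves implicit.
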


For the sources in image boxes to the ``west", we have
\be\label{pmsw0}
\ba
\K_1^{west}(\bt,\bs_j)&=\sum_{m=-\infty}^{-2}
\G^{(\rm MS)}\left(\bt - (\bs_j+ \bl_{m0})\right)\\
&=\frac{1}{4\pi\beta^2}
\left\{\int_{-\infty}^\infty e^{-\sqrt{\beta^2+\lambda^2}(x-x'_j)}e^{i\lambda(y-y'_j)}\right.\\
&\qquad \qquad \qquad \cdot
\begin{bmatrix}-\frac{\lambda^2}{\sqrt{\beta^2+\lambda^2}} & i\lambda\\
  i\lambda& \sqrt{\beta^2+\lambda^2}\end{bmatrix}
\frac{e^{-2\sqrt{\beta^2+\lambda^2}d}}{1-e^{-\sqrt{\beta^2+\lambda^2}d}}  d\lambda\\
&\qquad\qquad\qquad+\int_{-\infty}^\infty e^{-|\lambda|(x-x'_j)+i\lambda(y-y'_j)}\\
&\left.\qquad\qquad\qquad\cdot\begin{bmatrix} |\lambda| & -i\lambda \\
-i\lambda & -|\lambda| \end{bmatrix}
\frac{e^{-2|\lambda|d}}{1-e^{-|\lambda|d}}  d\lambda\right\}, \\
\K_2^{west}(\bt,\bs_j)&=\sum_{m=-\infty}^{-4}\sum_{n=-1}^1
\G^{(\rm MS)}\left(\bt - (\bs_j+ \bl_{mn})\right)\\
&=\frac{1}{4\pi\beta^2}\sum_{n=-1}^1
\left\{\int_{-\infty}^\infty e^{-\sqrt{\beta^2+\lambda^2}(x-x'_j-n\xi)}e^{i\lambda(y-y'_j-n\eta)}\right.\\
&\qquad \qquad \qquad \cdot
\begin{bmatrix}-\frac{\lambda^2}{\sqrt{\beta^2+\lambda^2}} & i\lambda\\
  i\lambda& \sqrt{\beta^2+\lambda^2}\end{bmatrix}
\frac{e^{-4\sqrt{\beta^2+\lambda^2}d}}{1-e^{-\sqrt{\beta^2+\lambda^2}d}}  d\lambda\\
&\qquad\qquad\qquad+\int_{-\infty}^\infty e^{-|\lambda|(x-x'_j-n\xi)+i\lambda(y-y'_j-n\eta)}\\
&\left.\qquad\qquad\qquad\cdot\begin{bmatrix} |\lambda| & -i\lambda \\
-i\lambda & -|\lambda| \end{bmatrix}
\frac{e^{-4|\lambda|d}}{1-e^{-|\lambda|d}}  d\lambda\right\}.
\ea
\ee
\begin{theorem} \label{west_thm_ms}
Let $\bS = \{ \bs_j \, | \, j = 1,\dots,N_S \}$ and 
$\bT = \{ \bt_l \, | \, l = 1,\dots,N_T \}$ 
denote collections of sources and targets
in the unit cell $\C$ and let $\P_1^{west}, \P_2^{west}$ denote the 
$N_T \times N_S$ block matrices with $2 \times 2$ blocks 
$\P_1^{west} (l,j) = \K_1^{west}(\bt_l,\bs_j)$ and
$\P_2^{west} (l,j) = K_2^{west}(\bt_l,\bs_j)$. 
Given a precision $\epsilon$, let $N^1_q(\beta,d,\eta),N^1_q(0,d,\eta)$
and $N^2_q(\beta,d,\eta), N^2_q(0,d,\eta)$ 
denote the 
number of points needed in the numerical quadratures
for the two integrals in each of 
$\K_1^{west}(\bt,\bs)$ and $\K_2^{west}(\bt,\bs)$, 
with weights and nodes $\{ w_{n,\beta,1},\lambda_{n,\beta,1} \}$,
$\{ w_{n,0,1},\lambda_{n,0,1} \}$,
$\{ w_{n,\beta,2},\lambda_{n,\beta,2} \}$, and
$\{ w_{n,0,2},\lambda_{n,0,2} \}$, respectively.
Let 
$\bL_{1,\beta}^{west}$,
$\bL_1^{west}$,
$\bL_{2,\beta}^{west}$,
$\bL_2^{west}$,
$\bR_{1,\beta}^{west}$,
$\bR_1^{west}$,
$\bR_{2,\beta}^{west}$,
$\bR_2^{west}$
be dense block matrices with $2\times 2$ blocks given by:
\be\label{lowrankwest_ms1}
\ba
\bL_{1,\beta}^{west}(l,n) &=
e^{-\sqrt{\lambda_{n,\beta,1}^2 + \beta^2}x_l} e^{i \lambda_{n,\beta,1} y_l} 
\bI_2 \, |\quad l = 1,\dots,N_T, \  n = 1,\dots,N^1_q(\beta,d,\eta) \\
\bL_1^{west}(l,n) &=
e^{-|\lambda_{n,0,1}|x_l} e^{i \lambda_{n,0,1} y_l} 
\bI_2 \, \qquad |\quad l = 1,\dots,N_T, \  n = 1,\dots,N^1_q(0,d,\eta) \\
\bL_{2,\beta}^{west}(l,n) &=
e^{-\sqrt{\lambda_{n,\beta,2}^2 + \beta^2}x_l} e^{i \lambda_{n,\beta,2} y_l} 
\bI_2 \, |\quad l = 1,\dots,N_T, \  n = 1,\dots,N^2_q(\beta,d,\eta) \\
\bL_2^{west}(l,n) &=
e^{-|\lambda_{n,0,2}|x_l} e^{i \lambda_{n,0,2} y_l} 
\bI_2 \, \qquad |\quad l = 1,\dots,N_T, \  n = 1,\dots,N^2_q(0,d,\eta) \\
\bR_{1,\beta}^{west}(n,j) &=
e^{\sqrt{\lambda_{n,\beta,1}^2 + \beta^2}x'_j} e^{-i \lambda_{n,\beta,1} y'_j} 
\bI_2 \, |\quad n = 1,\dots,N^1_q(\beta,d,\eta), \  j = 1,\dots,N_S \\
\bR_1^{west}(n,j) &=
e^{|\lambda_{n,0,1}|x'_j} e^{-i \lambda_{n,0,1} y'_j} 
\bI_2 \, \qquad |\quad n = 1,\dots,N^1_q(0,d,\eta), \  j = 1,\dots,N_S \\
\bR_{2,\beta}^{west}(n,j) &=
e^{\sqrt{\lambda_{n,\beta,2}^2 + \beta^2}x'_j} e^{-i \lambda_{n,\beta,2} y'_j} 
\bI_2 \, |\quad n = 1,\dots,N^2_q(\beta,d,\eta), \  j = 1,\dots,N_S \\
\bR_2^{west}(n,j) &=
e^{|\lambda_{n,0,2}|x'_j} e^{i \lambda_{n,0,2} y'_j} 
\bI_2 \ \ \qquad |\quad n = 1,\dots,N^2_q(0,d,\eta), \  j = 1,\dots,N_S \, ,
\ea
\ee
and let
$\bD_{1,\beta}^{west}$,
$\bD_{1}^{west}$,
$\bD_{2,\beta}^{west}$,
$\bD_{2}^{west}$
be block diagonal matrices 
with $2\times 2$ blocks given by:
%
\be\label{lowrankwest_ms2}
\ba
\bD_{1,\beta}^{west}(n,n) &=  \frac{w_{n,\beta,1}}{4 \pi \beta^2} 
\frac{e^{-2\sqrt{\lambda_{n,\beta,1}^2+\beta^2}d}}
{1 - e^{-\sqrt{\lambda_{n,\beta,1}^2+\beta^2}d}} \, 
\begin{bmatrix}-\frac{\lambda_{n,1}^2}{\sqrt{\beta^2+\lambda_{n,\beta,1}^2}}
  & i\lambda_{n,\beta,1}\\
  i\lambda_{n,\beta,1}& \sqrt{\beta^2+\lambda_{n,\beta,1}^2}\end{bmatrix}, \\
&\hspace{2.8in} \  n = 1,\dots, N^1_q(\beta,d,\eta) \\
\bD_1^{west}(n,n) &=  \frac{w_{n,0,1}}{4 \pi \beta^2} 
\frac{e^{-2|\lambda_{n,0,1}|d}}
{1 - e^{-|{\lambda_{n,0,1}}|d}} \, 
\begin{bmatrix} |\lambda_{n,0,1}|& -i \lambda_{n,0,1} \\
-i \lambda_{n,0,1}  & -|\lambda_{n,0,1}| 
\end{bmatrix}, \\
&\hspace{2.8in} \  n = 1,\dots, N^1_q(0,d,\eta) \\
\bD_{2,\beta}^{west}(n,n) &=  \frac{w_{n,\beta,2}}{4 \pi \beta^2} 
[ e^{-\sqrt{\beta^2 + \lambda_{n,\beta,2}} \xi + i \lambda_{n,\beta,2} \eta} +
  e^{\lambda_{n,\beta,2} \xi - i \lambda_{n,\beta,2} \eta} + 1] \cdot \\
&\hspace{0.5in}
\frac{e^{-4\sqrt{\lambda_{n,\beta,2}^2+\beta^2}d}}
{1 - e^{-\sqrt{\lambda_{n,\beta,2}^2+\beta^2}d}} \, 
\begin{bmatrix}-\frac{\lambda_{n,\beta,2}^2}{\sqrt{\beta^2+\lambda_{n,\beta,2}^2}}
  & i\lambda_{n,\beta,2}\\
  i\lambda_{n,\beta,2}& \sqrt{\beta^2+\lambda_{n,\beta,2}^2}\end{bmatrix}, \\
&\hspace{2.8in}  \  n = 1,\dots, N^2_q(\beta,d,\eta) \\
\bD_2^{west}(n,n) &=  \frac{w_{n,0,2}}{4 \pi \beta^2} 
[ e^{-|\lambda_{n,0,2}| \xi + i \lambda_{n,0,2} \eta} +
  e^{|\lambda_{n,0.2}| \xi - i \lambda_{n,0.2} \eta} + 1] \cdot \\
&\hspace{0.5in}
\frac{e^{-4|\lambda_{n,0,2}|d}}
{1 - e^{-|\lambda_{n,0,2}|d}} \, 
\begin{bmatrix}|\lambda_{n,0,1}| & -i\lambda_{n,0,1}\\ 
-i\lambda_{n,0,1}& - |\lambda_{n,0,1}|\end{bmatrix},
\quad  n = 1,\dots, N^2_q(0,d,\eta) .
\ea
\ee
Let
\[
\ba
\P_1^{west} &= \bL_{1,\beta}^{west} \, \bD_{1,\beta}^{west} \, \bR_{1,\beta}^{west} +
 \bL_1^{west} \, \bD_{1}^{west} \, \bR_1^{west}, \\
\P_2^{west} &= \bL_{2,\beta}^{west} \, \bD_{2,\beta}^{west} \, \bR_{2,\beta}^{west} +
 \bL_2^{west} \, \bD_{2}^{west} \, \bR_2^{west}. \\
\ea
\]
Then the real parts of the vectors
\[  \P_1^{west} \bq, \ \P_2^{west} \bq 
\]
denote the contributions from the west sources to the 
corresponding periodizing potentials.
The formulas for 
$\P_1^{east}$ and $\P_2^{east}$ are identical, except that 
$x_l \leftrightarrow -x_l$ and 
$x'_j \leftrightarrow -x'_j$  in the various 
$\bL(l,n)$ and $\bR(n,j)$ blocks above and that
$\bD_{1,\beta}^{east}=\overline{\bD_{1,\beta}^{west}}$,
$\bD_{1}^{east}=\overline{\bD_{1}^{west}}$,
$\bD_{2,\beta}^{east}=\overline{\bD_{2,\beta}^{west}}$,
$\bD_{2}^{east}=\overline{\bD_{2}^{west}}$.
\end{theorem}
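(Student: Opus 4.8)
The plan is to mirror the argument for the modified Helmholtz ``west/east'' theorem (\cref{west_thm}) and its Poisson limit (\cref{west_thm_lap}), the only genuinely new feature being that each plane-wave integral in the modified Stokes representation \eqref{pmsw0} is a sum of \emph{two} integrals: one carrying the decay $e^{-\sqrt{\lambda^2+\beta^2}\,(\cdot)}$ inherited from the $K_0$ part of $G_{\rm MB}$, and one carrying the decay $e^{-|\lambda|(\cdot)}$ coming from the $\log$ part (i.e.\ from the biharmonic Green's function). First I would take \eqref{pmsw0} as the starting point --- these integral representations have already been obtained from the Sommerfeld/residue expansion \eqref{mspwexpeast} by summing the geometric series in $m$ --- fold each real-line integral onto $[0,\infty)$, and take real parts exactly as in \eqref{tk0west}. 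For $\K_1^{west}$ this leaves two integrals of the form $\Re\int_0^\infty e^{-\mu(x-x'_j)}\,e^{i\lambda(y-y'_j)}\,M(\lambda)\,\frac{e^{-2\mu d}}{1-e^{-\mu d}}\,d\lambda$, once with $\mu=\sqrt{\lambda^2+\beta^2}$ and the $2\times2$ symbol $M(\lambda)=\begin{bmatrix}-\lambda^2/\mu & i\lambda\\ i\lambda & \mu\end{bmatrix}$, and once with $\mu=|\lambda|$ and $M(\lambda)=\begin{bmatrix}|\lambda| & -i\lambda\\ -i\lambda & -|\lambda|\end{bmatrix}$; for $\K_2^{west}$ the same two integrals appear with $e^{-4\mu d}$ in place of $e^{-2\mu d}$, together with the bracketed sum over the near-layer shifts $n=-1,0,1$, which is valid under $d\ge\sqrt{\xi^2+\eta^2}$ as in \eqref{pylw02}.

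Next I would apply generalized Gaussian quadrature (\cref{sec:ggq}) to each of the two integrals \emph{separately}; this is precisely why the statement carries the four node counts $N^1_q(\beta,d,\eta)$, $N^1_q(0,d,\eta)$, $N^2_q(\beta,d,\eta)$, $N^2_q(0,d,\eta)$: the $|\lambda|$-integrals are discretized with the Poisson-type rule of \cref{sec:ggq}, the $\sqrt{\lambda^2+\beta^2}$-integrals with the modified Helmholtz rule. For each node $\lambda_n$ the integrand factorizes cleanly: the target-dependent plane wave $e^{-\mu_n x_l}e^{i\lambda_n y_l}\,\bI_2$ becomes a block-row of $\bL$, the source-dependent plane wave $e^{\mu_n x'_j}e^{-i\lambda_n y'_j}\,\bI_2$ becomes a block-column of $\bR$, and the scalar weight, the geometric-series factor, the $1/(4\pi\beta^2)$ prefactor, (for $\K_2$) the bracketed trigonometric sum over $n=-1,0,1$, and the $2\times2$ symbol $M(\lambda_n)$ combine into the $2\times2$ diagonal block of $\bD$. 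Reading these three pieces off against \eqref{pmsw0} reproduces \eqref{lowrankwest_ms1} and \eqref{lowrankwest_ms2}, and the quadrature truncation error, uniform over the admissible range of $(x-x'_j,y-y'_j)$ by construction of the rules, is the source of the $O(\epsilon)$.

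The ``east'' case then follows by reflection: the plane-wave expansion valid for $x<0$ is obtained from \eqref{mspwexpeast} by $x\mapsto -x$, which amounts to replacing $x_l\mapsto -x_l$ and $x'_j\mapsto -x'_j$ in the $\bL$ and $\bR$ blocks and complex-conjugating the scalar geometric factor and the off-diagonal entries of the symbol, i.e.\ $\bD^{east}=\overline{\bD^{west}}$ in each of the four cases; taking real parts then gives the east contribution, exactly as in the corollary following \cref{west_thm}. Finally, one assembles $\P_1^{west},\P_2^{west}$ (and the east analogues) as the sum of the ``$\beta$-part'' product $\bL_{i,\beta}^{west}\bD_{i,\beta}^{west}\bR_{i,\beta}^{west}$ and the ``$0$-part'' product $\bL_i^{west}\bD_i^{west}\bR_i^{west}$, and concludes that the real parts of $\P_1^{west}\bq$ and $\P_2^{west}\bq$ equal $\sum_j \K_1^{west}(\bt_l,\bs_j)q_j$ and $\sum_j \K_2^{west}(\bt_l,\bs_j)q_j$ up to $O(\epsilon)$.

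The step I expect to be the main obstacle is the quadrature analysis of the $|\lambda|$-symbol integral. Its geometric factor $e^{-2|\lambda|d}/(1-e^{-|\lambda|d})$ is $O(1/\lambda)$ as $\lambda\to0$, so the integrand appears strongly singular at the origin; one must check that the $\lambda$- and $|\lambda|$-prefactors in the Stokes symbol cancel this pole on the diagonal entries and that, for the off-diagonal entry, the extra factor of $\lambda$ produced when the odd part is folded into a sine does the same, so that the folded integrand restricted to $[0,\infty)$ is in fact analytic at $0$. In particular --- unlike the scalar Poisson case of \cref{west_thm_lap} --- no charge-neutrality hypothesis is needed here, which is why the theorem does not impose one. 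Granting this, the existence of a generalized Gaussian rule whose node count is independent of the cell geometry in the doubly periodic case and grows only linearly in the aspect ratio in the singly periodic case is inherited directly from \cref{sec:ggq}, and the remainder of the argument is the bookkeeping described above, identical in structure to \cref{south_thm_ms}, \cref{west_thm} and \cref{west_thm_lap}.
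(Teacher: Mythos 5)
Your proposal is correct and follows essentially the same route as the paper, which states \cref{west_thm_ms} without a formal proof precisely because it is the immediate discretization of the integral representations \eqref{pmsw0} (themselves obtained from the residue-derived expansion \eqref{mspwexpeast} by summing the geometric series in $m$): separate quadratures for the $\sqrt{\lambda^2+\beta^2}$-part and the $|\lambda|$-part, plane-wave factors read off into $\bL$ and $\bR$, the remaining scalars and the $2\times 2$ symbol collected into $\bD$, and the east case obtained by the reflection $x\mapsto -x$ with conjugated diagonal blocks. Your observations that the folded $|\lambda|$-integrand is regular at the origin and that no charge-neutrality hypothesis is needed for $\beta>0$ (it enters only in the Stokes limit, \cref{west_thm_s}) are consistent with the paper.
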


\subsection{Periodizing operator for the Stokes equations}
While the Stokeslet, i.e., the Green's function for the incompressible Stokes flow,
is given by the formula
\be
\G^{(\rm S)}(\bt)=-\frac{1}{4\pi}\left(\log |\bt|\,\bI-\frac{\bt\otimes \bt}{|\bt|^2}\right),
\label{stokeslet}
\ee
a systematic way of computing the correct limit for the periodizing 
operators is to let 
$\beta\rightarrow 0$ in the various formulas for the modified Stokes
equations, invoking charge neutrality before taking the limit.

\begin{theorem} \label{south_thm_s}
Let $\bS = \{ \bs_j \, | \, j = 1,\dots,N_S \}$ and 
$\bT = \{ \bt_l \, | \, l = 1,\dots,N_T \}$ 
denote collections of sources and targets
in the unit cell $\C$ and let $\P_2^{south}$ denote the 
$N_T \times N_S$ block matrix which is the periodizing
operator for all ``south" sources.
Given a precision $\epsilon$, let $M$ be given by
\cref{mpk0s}.
With $\alpha_m, Q_m$ given in \cref{chialphadef,chialphadefms},
let 
$\bL^{south}$ and  
$\bR^{south}$ be defined as in \cref{lowranksouth3ms}
except with 
\be\label{southmzero}
\ba
\bL^{south}(l,0) &= 
\begin{pmatrix}
y_l & 0  \\
0  & y_l
\end{pmatrix} \\
\bR^{south}(0,j) &= 
\begin{pmatrix}
y'_j & 0  \\
0  & y'_j
\end{pmatrix}.
\ea
\ee
Let $\bD_a^{south}, \bD_b^{south}
 \in \mathbb{C}^{2(2M+1) \times 2(2M+1)}$ be 
$(2M+1)\times (2M+1)$ 
block diagonal matrices with $2\times 2$ diagonal
blocks, and let
$\bD_S
 \in \mathbb{C}^{2N_S \times 2N_S}$,
$\bD_T
 \in \mathbb{C}^{2N_T \times 2N_T}$ 
be block diagonal matrices with $2\times 2$ diagonal blocks
given by 
\be\label{lowranksouth3s}
\ba
\bD_a^{south}(m,m) &=
\frac{1}{4d} 
\frac{e^{-2Q_m}}{1 - e^{-Q_m}} \, 
\left( \frac{1}{|\alpha_m|} 
\begin{pmatrix}
1 & 0 \\
0 & 1 \end{pmatrix} \, -
\frac{2-e^{-Q_m}}{1 - e^{-Q_m}} \eta \, 
\begin{pmatrix}
1 & i \sign(m) \\
i \sign(m) & -1
\end{pmatrix}
\right)  \, \\
&\hspace{2in} \quad {\rm for}\ m \neq 0 \\
\bD_b^{south}(m,m) &=
\frac{1}{4d} 
\frac{e^{-2Q_m}}{1 - e^{-Q_m}} \, 
\begin{pmatrix}
1 & i \sign(m) \\
i \sign(m) & -1
\end{pmatrix} \,  \quad {\rm for}\ m \neq 0 \\
\bD_a^{south}(0,0) &=
-\frac{1}{2d \eta} 
\begin{pmatrix}
1 & 0 \\
0 & 0 \end{pmatrix} \, , \quad
\bD_b^{south}(0,0) =
\begin{pmatrix}
0 & 0 \\
0 & 0 \end{pmatrix} 
\, \\
\bD_S(j,j) &=
\begin{pmatrix}
y'_j & 0 \\
0 & y'_j
\end{pmatrix} \, \\
\bD_T(i,i) &=
\begin{pmatrix}
y_i & 0 \\
0 & y_i
\end{pmatrix} \, .
\ea
\ee
Then
\[
\ba
\P_2^{south} &= \bL^{south} \, \bD_a^{south} \, \bR^{south} -
 \bD_T \, \bL^{south} \, \bD_b^{south} \, \bR^{south} +
 \bL^{south} \, \bD_b^{south} \, \bR^{south} \, \bD_S  +
 O(\epsilon)\, .
\ea
\]
\end{theorem}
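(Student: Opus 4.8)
The plan is to derive \cref{south_thm_s} from the modified Stokes factorization of \cref{south_thm_ms} by letting $\beta \to 0$, handling the generic modes $m \neq 0$ by a Taylor expansion in $\beta^2$ and the mode $m = 0$ by the same charge‑neutral limiting argument already used for the Poisson equation. Throughout, the operator is applied to a charge‑neutral vector $\bq$, so that $\sum_{j} q_j = 0$ may be used \emph{before} the limit is taken.

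First I would fix a mode $m \neq 0$, write $a = |\alpha_m|$ and $\mathbf{M}_m = \left( \begin{smallmatrix} 1 & i\,\sign(m) \\ i\,\sign(m) & -1 \end{smallmatrix} \right)$, so that in \cref{pmss1} the matrix of the $|\alpha_m|$‑pole term is $-a\,\mathbf{M}_m$ identically, while that of the $\chi_m$‑pole term reduces to $a\,\mathbf{M}_m$ at $\beta = 0$. Using $\chi_m = a\sqrt{1+\beta^2/\alpha_m^2} = a + \tfrac{\beta^2}{2a} + O(\beta^4)$ one expands, to order $\beta^2$: $-\alpha_m^2/\chi_m = -a + \tfrac{\beta^2}{2a} + O(\beta^4)$, so the $\chi_m$‑pole matrix becomes $a\,\mathbf{M}_m + \tfrac{\beta^2}{2a}\bI_2 + O(\beta^4)$; $Q_m^{(\beta)} = Q_m + \tfrac{\beta^2\eta}{2a} + O(\beta^4)$ (with $Q_m,Q_m^{(\beta)}$ as in \cref{chialphadefms}); $e^{-\chi_m(y_l-y'_j)} = e^{-a(y_l-y'_j)}\bigl(1 - \tfrac{\beta^2}{2a}(y_l-y'_j) + O(\beta^4)\bigr)$; and, writing $f(Q) = e^{-2Q}/(1-e^{-Q})$ (so that $f'(Q) = -f(Q)\tfrac{2-e^{-Q}}{1-e^{-Q}}$), $f(Q_m^{(\beta)}) = f(Q_m) + f'(Q_m)\tfrac{\beta^2\eta}{2a} + O(\beta^4)$. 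Both Sommerfeld terms carry the prefactor $1/(2d\beta^2)$; the leading $O(\beta^{-2})$ part is proportional to $f(Q_m)\bigl(a\,\mathbf{M}_m + (-a\,\mathbf{M}_m)\bigr) = 0$, so the two poles conspire to kill the singularity. Collecting the surviving term — the $O(\beta^2)$ coefficient divided by $\beta^2$, using $\tfrac{1}{\beta^2}\cdot\tfrac{\beta^2}{2a} = \tfrac{1}{2a}$ — gives, for the mode-$m$ block of $\K_2^{south}(\bt_l,\bs_j)$,
\[
\frac{f(Q_m)}{4d}\left[\frac{\bI_2}{|\alpha_m|} - \eta\,\frac{2-e^{-Q_m}}{1-e^{-Q_m}}\,\mathbf{M}_m - (y_l-y'_j)\,\mathbf{M}_m\right] e^{-|\alpha_m|(y_l-y'_j)}\, e^{i\alpha_m(x_l-x'_j)} + O(\beta^2).
\]
The first two bracketed terms are exactly $\bD_a^{south}(m,m)$ of \cref{lowranksouth3s} sandwiched between the plane‑wave factors defining $\bL^{south},\bR^{south}$, while the last term is $-(y_l-y'_j)\,\bD_b^{south}(m,m)$. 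Since a factor linear in $y_l - y'_j$ is not separable, I would split $-(y_l-y'_j) = -y_l + y'_j$: the $-y_l$ piece produces $-\bD_T\,\bL^{south}\bD_b^{south}\bR^{south}$ and the $+y'_j$ piece produces $\bL^{south}\bD_b^{south}\bR^{south}\,\bD_S$, using $\bD_T(l,l) = y_l\bI_2$ and $\bD_S(j,j) = y'_j\bI_2$. This is precisely the origin of the three‑term structure in the statement.

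Next I would treat the mode $m = 0$, where the two Sommerfeld matrices degenerate. The $|\alpha_m|$‑pole matrix $-|\alpha_m|\mathbf{M}_m$ vanishes at $m = 0$, so that pole contributes nothing and $\bD_b^{south}(0,0) = 0$; the $\chi_m$‑pole matrix is $\left( \begin{smallmatrix} \chi_0 & 0 \\ 0 & 0 \end{smallmatrix} \right) = \beta \left( \begin{smallmatrix} 1 & 0 \\ 0 & 0 \end{smallmatrix} \right)$, so the $m=0$ contribution to $\P_2^{south}\bq$ equals the scalar quantity $\frac{1}{2d}\frac{1}{\beta}\frac{e^{-2\beta\eta}}{1-e^{-\beta\eta}}\sum_{j} e^{-\beta(y-y'_j)}q_j$ of \cref{phis0} times the matrix $\left( \begin{smallmatrix} 1 & 0 \\ 0 & 0 \end{smallmatrix} \right)$. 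Reusing the argument of \cref{south_thm_lap} — differentiate in $y$ to remove the $1/\beta$, Taylor‑expand the remaining factors, invoke $\sum_j q_j = 0$, and integrate back (the additive constant being immaterial for Stokes flow) — yields the $m=0$ contribution $-\frac{1}{2d\eta}\bigl(\sum_j y'_j q_j\bigr) y \left( \begin{smallmatrix} 1 & 0 \\ 0 & 0 \end{smallmatrix} \right)$, which is reproduced by $\bL^{south}(l,0) = y_l\bI_2$, $\bR^{south}(0,j) = y'_j\bI_2$ of \cref{southmzero} together with $\bD_a^{south}(0,0) = -\frac{1}{2d\eta}\left( \begin{smallmatrix} 1 & 0 \\ 0 & 0 \end{smallmatrix} \right)$ and $\bD_b^{south}(0,0) = 0$. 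Finally, truncating the mode sum at $|m| = M$ with $M$ as in \cref{mpk0s} produces the $O(\epsilon)$ remainder exactly as in the proof of \cref{south_thm}, since each diagonal block is bounded by a polynomial in $|\alpha_m|$ times $f(Q_m) = O(e^{-2|\alpha_m|\eta})$, $|y_l-y'_j| \le \eta$, and the plane‑wave factor is $O(e^{|\alpha_m|\eta})$, so the tail of the series is geometric.

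The hard part will be the bookkeeping of the $\beta^2$‑expansion for $m \neq 0$: one must carry every factor consistently to order $\beta^2$, keep straight the $\sign(m)$ hidden inside $i\alpha_m$, and verify that the $O(\beta^{-2})$ matrix cancels exactly, leaving precisely the claimed limit — in particular the non‑separable $(y_l-y'_j)$ term responsible for the three‑term decomposition. The secondary difficulty is the mode $m=0$, whose contribution is ill‑defined as $\beta \to 0$ unless charge neutrality is applied first, which dictates the order in which the limits are taken.
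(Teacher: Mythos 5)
Your proposal is correct and follows essentially the same route as the paper: take $\beta\to 0$ in the modified Stokes factorization of \cref{pmss1} mode by mode, with the two pole contributions cancelling the $O(\beta^{-2})$ singularity for $m\neq 0$ (your explicit Taylor expansion in $\beta^2$ is just a hands-on version of the paper's L'H\^opital computation and reproduces \cref{pssm} exactly), and handle $m=0$ by the charge-neutral Poisson-type limit, yielding \cref{pss0}. Your explicit splitting of the non-separable $(y_l-y'_j)$ factor into the $\bD_T$ and $\bD_S$ terms is a detail the paper leaves implicit, and your truncation estimate matches the one inherited from \cref{south_thm}.
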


\begin{proof}
Consider first one of the terms in \cref{pmss1} corresponding to a mode
$m \neq 0$. We will denote the limit as $\beta\to 0$ by $\K_2^{south}[m]$.
Using L'Hopital's rule, and taking the limit $\beta \rightarrow 0$,
it is straightforward to see that
\be\label{pssm}
\ba
\K_2^{south}[m](\bt,\bs_j) 
&= \lim_{\beta \rightarrow 0}
\frac{1}{2d\beta^2}\left(
\begin{bmatrix} \chi_m
  & i\alpha_m \\ i\alpha_m & -\frac{\alpha_m^2}{\chi_m} 
\end{bmatrix}
\frac{e^{-2Q^{(\beta)}_m}}{1-e^{-Q^{(\beta)}_m}} e^{-\chi_m(y-y'_j)+i\alpha_m(x-x'_j)}\right. \\
&\left.\qquad\qquad\qquad
+\begin{bmatrix} -|\alpha_m| & -i\alpha_m \\ -i\alpha_m & |\alpha_m| \end{bmatrix}
\frac{e^{-2Q_m}}{1-e^{-Q_m}} e^{-|\alpha_m|(y-y'_j)+i\alpha_m(x-x'_j)}\right)\\
&=\frac{1}{4d}\left\{
\frac{1}{|\alpha_m|}\begin{bmatrix} 1 & 0 \\ 0 & 1 \end{bmatrix}
-\left(y-y_0+\frac{2-e^{-Q_m}}{1-e^{-Q_m}}\eta\right)
\begin{bmatrix} 1 & i\sign(m) \\ i\sign(m) & -1 \end{bmatrix}\right\}\\
&\qquad\quad\cdot \frac{e^{-2Q_m}}{1-e^{-Q_m}}e^{-|\alpha_m|(y-y'_j)+i\alpha_m(x-x'_j)}.
\ea
\ee
It is easy to check that every column of $\K_2^{south}[m]$ is divergence-free 
and that every entry of $\K_2^{south}[m]$ is biharmonic.

For the $m=0$ term, we have
\be
\K_2^{south}[0](\bt,\bs_j) = \lim_{\beta\to 0}\frac{1}{2d\beta^2}
\begin{bmatrix} \beta & 0\\ 0 & 0 \end{bmatrix}
\frac{e^{-2\beta \eta}}{1-e^{-\beta \eta}} e^{-\beta(y-y'_j)}.
\ee
As we did for the Poisson equation, using charge neutrality and 
expanding the exponential terms in a Taylor series, we obtain
\be\label{pss0}
\K_2^{south}[0](\bt,\bs_j) 
=-\frac{1}{2d\eta} \begin{bmatrix} y y'_j & 0\\ 0 & 0 \end{bmatrix}.
\ee
Combining \eqref{pssm} and \eqref{pss0}, we obtain 
\be
\ba
\K_2^{south}(\bt,\bs_j)&= \sum_{m=-\infty}^\infty \K_2^{south}[m](\bt,\bs_j) \\
&=-\frac{1}{2d\eta} \begin{bmatrix} y y'_j & 0\\ 0 & 0 \end{bmatrix}
+\frac{1}{4d} \sum_{\substack{m=-\infty \\ m\ne 0}}^\infty\left\{
\frac{1}{|\alpha_m|}\begin{bmatrix} 1 & 0 \\ 0 & 1 \end{bmatrix}\right.\\
&\left.-\left(y-y'_j+\frac{2-e^{-Q_m}}{1-e^{-Q_m}}\eta\right)
\begin{bmatrix} 1 & i\sign(m) \\ i\sign(m) & -1 \end{bmatrix}\right\}\\
&\qquad\qquad\qquad\cdot \frac{e^{-2Q_m}}{1-e^{-Q_m}}
e^{-|\alpha_m|(y-y'_j)+i\alpha_m(x-x'_j)}.
\ea
\ee

\end{proof}

\begin{remark}
In an almost identical manner, we can show that
\be
\ba
\K_2^{north}(\bt,\bs_j)
&=-\frac{1}{2d\eta} \begin{bmatrix} y y'_j & 0\\ 0 & 0 \end{bmatrix}
+\frac{1}{4d} \sum_{\substack{m=-\infty \\ m\ne 0}}^\infty\left\{
\frac{1}{|\alpha_m|}\begin{bmatrix} 1 & 0 \\ 0 & 1 \end{bmatrix}\right.\\
&\left.-\left(y-y'_j-\frac{2-e^{-\overline{Q_m}}}{1-e^{-\overline{Q_m}}}\eta\right)
\begin{bmatrix} -1 & i\sign(m) \\ i\sign(m) & 1 \end{bmatrix}\right\}\\
&\qquad\qquad\qquad\cdot \frac{e^{-2\overline{Q_m}}}{1-e^{-\overline{Q_m}}}
e^{|\alpha_m|(y-y'_j)+i\alpha_m(x-x'_j)}.
\ea
\ee
And the expression for $\P_2^{north}$ can be derived similarly.  
\end{remark}
Taking the limit $\beta\rightarrow 0$ for \eqref{pmsw0} and using
charge neutrality, we likewise 
obtain the west part of the periodizing operator for the Stokeslet:
\be\label{pssw0}
\ba
\K_1^{west}(\bt,\bs)&=
\frac{1}{8\pi} \int_{-\infty}^\infty
\frac{e^{-2|\lambda|d}}{1-e^{-|\lambda|d}} e^{-|\lambda|(x-x')}e^{i\lambda(y-y')}
\cdot\left\{\frac{1}{|\lambda|}\begin{bmatrix} 1 & 0 \\ 0 & 1 \end{bmatrix}\right.\\
&\left.  -\left(x-x'+\frac{2-e^{-|\lambda|d}}{1-e^{-|\lambda|d}}d\right)
\begin{bmatrix}-1 & i\sign(\lambda)\\ i\sign(\lambda)& 1\end{bmatrix}\right\}
d\lambda, \\
\K_2^{west}(\bt,\bs)&=
\frac{1}{8\pi}\sum_{n=-1}^1 \int_{-\infty}^\infty
\frac{e^{-4|\lambda|d}}{1-e^{-|\lambda|d}} e^{-|\lambda|(x-x'-n\xi)}e^{i\lambda(y-y'-n\eta)}
\cdot\left\{\frac{1}{|\lambda|}\begin{bmatrix} 1 & 0 \\ 0 & 1 \end{bmatrix}\right.\\
&\left.  -\left(x-x'-n\xi+\frac{4-3e^{-|\lambda|d}}{1-e^{-|\lambda|d}}d\right)
\begin{bmatrix}-1 & i\sign(\lambda)\\ i\sign(\lambda)& 1\end{bmatrix}\right\}
d\lambda,
\ea
\ee
It is again easy to check that every column of $\K_1^{west}$ or 
$\K_2^{west}$ is divergence-free 
and that every entry is biharmonic. The above representation
yields the following theorem.

\begin{theorem} \label{west_thm_s}
Let $\bS = \{ \bs_j \, | \, j = 1,\dots,N_S \}$ and 
$\bT = \{ \bt_l \, | \, l = 1,\dots,N_T \}$ 
denote collections of sources and targets
in the unit cell $\C$ and let $\P_1^{west}, \P_2^{west}$ denote the 
$N_T \times N_S$ block matrices with $2 \times 2$ blocks 
$\P_1^{west} (l,j) = \K_1^{west}(\bt_l,\bs_j)$ and
$\P_2^{west} (l,j) = K_2^{west}(\bt_l,\bs_j)$. 
Given a precision $\epsilon$, let $N^1_q(d,\eta)$
and $N^2_q(d,\eta)$ 
denote the 
number of points needed in the numerical quadratures
for the integrals in 
$\K_1^{west}(\bt,\bs)$ and $\K_2^{west}(\bt,\bs)$, 
with weights and nodes 
$\{ w_{n,0,1},\lambda_{n,0,1} \}$,
$\{ w_{n,0,2},\lambda_{n,0,2} \}$, respectively.
Let 
$\bL_1^{west}$,
$\bL_2^{west}$,
$\bR_1^{west}$,
$\bR_2^{west}$
be dense block matrices with $2\times 2$ blocks given by
\cref{lowrankwest_ms1},
and let
$\bD_{1,a}^{west}$,
$\bD_{1,b}^{west}$,
$\bD_{2,a}^{west}$,
$\bD_{2,b}^{west}$
be block diagonal matrices 
with $2\times 2$ blocks given by:
\be\label{lowrankwest_s}
\ba
\bD_{1,a}^{west}(n,n) &=  \frac{w_{n,0,1}}{8 \pi} 
\frac{e^{-2|\lambda_{n,0,1}|d}}
{1 - e^{-|{\lambda_{n,0,1}}|d}} \, 
\Bigg(
\begin{bmatrix} 1/|\lambda_{n,0,1}|& 0\\
0  & 1/|\lambda_{n,0,1}| 
\end{bmatrix}  \\
&\hspace{1in} -
\frac{2 - e^{-|\lambda_{n,0,1}|d}}
{1 - e^{-|{\lambda_{n,0,1}}|d}} \, d \, 
\begin{bmatrix} -1 & i \sign(\lambda_{n,0,1})\\
i \sign(\lambda_{n,0,1})  & 1
\end{bmatrix} \Bigg) , \\
\bD_{1,b}^{west}(n,n) &=  \frac{w_{n,0,1}}{8 \pi} 
\frac{e^{-2|\lambda_{n,0,1}|d}}
{1 - e^{-|{\lambda_{n,0,1}}|d}} \, 
\begin{bmatrix} -1 & i \sign(\lambda_{n,0,1})\\
i \sign(\lambda_{n,0,1})  & 1
\end{bmatrix}, \\
\bD_{2,a}^{west}(n,n) &=  \frac{w_{n,0,2}}{8 \pi} 
\frac{e^{-4|\lambda_{n,0,2}|d}}
{1 - e^{-|{\lambda_{n,0,2}}|d}} \, 
[ e^{-|\lambda_{n,0,2}| \xi + i \lambda_{n,0,2} \eta} +
  e^{|\lambda_{n,0,2}| \xi - i \lambda_{n,0,2} \eta} + 1] \times \\
&
\Bigg(
\begin{bmatrix} 1/|\lambda_{n,0,2}|& 0\\
0  & 1/|\lambda_{n,0,2}| 
\end{bmatrix} -
\frac{4 - 3e^{-|\lambda_{n,0,2}|d}}
{1 - e^{-|{\lambda_{n,0,2}}|d}} \, d \, 
\begin{bmatrix} -1 & i \sign(\lambda_{n,0,2})\\
i \sign(\lambda_{n,0,2})  & 1
\end{bmatrix} \\
&
- [ e^{-|\lambda_{n,0,2}| \xi + i \lambda_{n,0,2} \eta} -
  e^{|\lambda_{n,0,2}| \xi - i \lambda_{n,0,2} \eta} ] \, \xi
\begin{bmatrix} -1 & i \sign(\lambda_{n,0,2})\\
i \sign(\lambda_{n,0,2})  & 1
\end{bmatrix} \Bigg) \\
\bD_{2,b}^{west}(n,n) &=  \frac{w_{n,0,2}}{8 \pi} 
\frac{e^{-4|\lambda_{n,0,2}|d}}
{1 - e^{-|\lambda_{n,0,2}|d}} \, 
[ e^{-|\lambda_{n,0,2}| \xi + i \lambda_{n,0,2} \eta} +
  e^{|\lambda_{n,0,2}| \xi - i \lambda_{n,0,2} \eta} + 1] \times \\
&\hspace{1in}
\begin{bmatrix} -1 & i \sign(\lambda_{n,0,2})\\
i \sign(\lambda_{n,0,2})  & 1
\end{bmatrix} .
\ea
\ee
Let
\[
\ba
\P_1^{west} &= \bL_1^{west} \, \bD_{1,a}^{west} \, \bR_1^{west} 
- \bD_T \bL_1^{west} \bD_{1,b}^{west} \, \bR_1^{west}
+ \bL_1^{west} \bD_{1,b}^{west} \, \bR_1^{west} \, \bD_S \\
\P_2^{west} &= \bL_2^{west} \, \bD_{2,a}^{west} \, \bR_2^{west} 
- \bD_T \bL_2^{west} \bD_{2,b}^{west} \, \bR_2^{west}
+ \bL_2^{west} \bD_{2,b}^{west} \, \bR_2^{west} \, \bD_S \, .
\ea
\]
Then the real parts of the vectors
\[  \P_1^{west} \bq, \ \P_2^{west} \bq 
\]
denote the contributions from the west sources to the 
corresponding periodizing potentials.
\end{theorem}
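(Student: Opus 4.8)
The plan is to obtain the claimed factorization by passing to the limit $\beta\to 0$ in the modified-Stokes ``west'' representations \eqref{pmsw0}, invoking charge neutrality before taking the limit, and then discretizing the resulting Sommerfeld-type integrals \eqref{pssw0} by the generalized Gaussian quadrature of \cref{sec:ggq} and separating variables, exactly as in the passage from \eqref{pmss1} to \cref{south_thm_s}. Since \eqref{pssw0} is already the stated form of the Stokeslet ``west'' kernels, the content of the theorem beyond \eqref{pssw0} is the mechanical rearrangement into $\bL\bD\bR$ blocks; but a self-contained proof should re-establish \eqref{pssw0}, which is where the real work lies.

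First I would re-derive \eqref{pssw0}. In \eqref{pmsw0} the prefactor $\tfrac{1}{4\pi\beta^2}$ multiplies a sum of two integrals whose $2\times2$ matrix kernels, evaluated at $\beta=0$, differ only by an overall sign, and whose exponential and geometric-series factors, $e^{-\sqrt{\beta^2+\lambda^2}(x-x')}$ and $\tfrac{e^{-2\sqrt{\beta^2+\lambda^2}d}}{1-e^{-\sqrt{\beta^2+\lambda^2}d}}$, are functions of $\sqrt{\beta^2+\lambda^2}$ and hence even in $\beta$; the second integral carries no $\beta$ at all. Consequently the $O(\beta^0)$ contributions cancel, there is no $O(\beta^1)$ term, and the limit is governed by the $O(\beta^2)$ coefficient: expanding $\sqrt{\beta^2+\lambda^2}=|\lambda|+\tfrac{\beta^2}{2|\lambda|}+O(\beta^4)$ and the remaining $\beta$-dependent factors to second order, collecting the $\beta^2$ terms and dividing by $\beta^2$, reproduces the integrand of \eqref{pssw0} --- including the polynomial prefactor $\bigl(x-x'+\tfrac{2-e^{-|\lambda|d}}{1-e^{-|\lambda|d}}d\bigr)$ for $\K_1^{west}$ and its $n\xi$-shifted, $n$-summed analogue (with $4-3e^{-|\lambda|d}$ in place of $2-e^{-|\lambda|d}$) for $\K_2^{west}$. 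I expect this $\beta\to0$ passage, together with the behaviour near $\lambda=0$, to be the main obstacle: the integrand of \eqref{pssw0} is $O(\lambda^{-2})$ as $\lambda\to 0$, but once the kernel is contracted against a charge-neutral vector $\bq$ the source density gains a factor $\lambda$ from $\sum_j q_j e^{\lambda x'_j-i\lambda y'_j}=O(\lambda)$, and any physical output (a derivative of the potential) supplies a second factor of $\lambda$, so the quadrature is only ever applied to integrals $\int_0^\infty e^{-\lambda x}e^{i\lambda y}M_2(\lambda)\,d\lambda$ with $M_2$ smooth, as in the proof of \cref{west_thm_lap}. The divergence-free and biharmonic properties of the columns of \eqref{pssw0} follow term by term from the identities already recorded, and folding $\int_{-\infty}^\infty$ onto $[0,\infty)$ conjugates the surviving exponentials and flips $\sign(\lambda)$, explaining why only the real part is retained.

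Given \eqref{pssw0}, the remaining step is bookkeeping. Applying the $N^1_q$- and $N^2_q$-point rules $\{w_{n,0,k},\lambda_{n,0,k}\}$ replaces each integral by a finite sum, and in each summand the exponential splits as $e^{-|\lambda|(x-x')}e^{i\lambda(y-y')}=\bigl(e^{-|\lambda|x}e^{i\lambda y}\bigr)\bigl(e^{|\lambda|x'}e^{-i\lambda y'}\bigr)$, the first block becoming a column of $\bL_k^{west}$ and the second a row of $\bR_k^{west}$, as in \eqref{lowrankwest_ms1}. Writing $M(\lambda):=\bigl(\begin{smallmatrix}-1 & i\sign(\lambda)\\ i\sign(\lambda) & 1\end{smallmatrix}\bigr)$, the $\lambda$-only parts of the brace --- the $\tfrac{1}{|\lambda|}\bI$ term, the constant $\tfrac{2-e^{-|\lambda|d}}{1-e^{-|\lambda|d}}d$ (resp.\ its doubly periodic counterpart) times $M(\lambda)$, and, for $\K_2^{west}$, the extra $\xi$-term produced by the $-n\xi$ shift --- combine with the scalar $\tfrac{1}{8\pi}$, the geometric-series factor, and the weight $w_{n,0,k}$ to form the diagonal block $\bD_{k,a}^{west}$, while $M(\lambda)$ alone with the same scalar factors forms $\bD_{k,b}^{west}$. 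The one factor that does \emph{not} separate is the polynomial $x-x'$ multiplying $M(\lambda)$; writing it as a target part minus a source part and absorbing these into a left multiplication by the diagonal matrix $\bD_T$ of target coordinates and a right multiplication by the diagonal matrix $\bD_S$ of source coordinates yields exactly the three-term expressions stated for $\P_1^{west}$ and $\P_2^{west}$. The truncation error is the generalized Gaussian quadrature error, which is $O(\epsilon)$ by design, and the ``east'' formulas follow from the reflection $x\mapsto -x$, i.e.\ conjugation of the diagonal blocks.
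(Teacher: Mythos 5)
Your proposal follows essentially the same route as the paper: the substantive step is the $\beta\to 0$ limit of \cref{pmsw0} (your Taylor expansion in $\beta^2$ is just the explicit form of the L'Hopital-type argument the paper applies ``likewise'' to obtain \cref{pssw0}), and the passage from \cref{pssw0} to the stated $\bL\,\bD_{a}\,\bR$ plus $\bD_T$/$\bD_S$ factorization is the same quadrature-and-separation bookkeeping the paper leaves implicit, including the handling of the non-separable $x-x'$ factor and the near-singularity at $\lambda=0$ under charge neutrality. This matches the paper's proof in both structure and level of rigor, so no further comment is needed.
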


\begin{remark}
In an almost identical manner, we can show that
\be
\ba
\K_1^{east}(\bt,\bs)&=
\frac{1}{8\pi} \int_{-\infty}^\infty
\frac{e^{-2|\lambda|d}}{1-e^{-|\lambda|d}} e^{|\lambda|(x-x')}e^{i\lambda(y-y')}
\cdot\left\{\frac{1}{|\lambda|}\begin{bmatrix} 1 & 0 \\ 0 & 1 \end{bmatrix}\right.\\
&\left.  -\left(x-x'-\frac{2-e^{-|\lambda|d}}{1-e^{-|\lambda|d}}d\right)
\begin{bmatrix}1 & i\sign(\lambda)\\ i\sign(\lambda)& -1\end{bmatrix}\right\}
d\lambda, \\
\K_2^{east}(\bt,\bs)&=
\frac{1}{8\pi}\sum_{n=-1}^1 \int_{-\infty}^\infty
\frac{e^{-4|\lambda|d}}{1-e^{-|\lambda|d}} e^{|\lambda|(x-x'-n\xi)}e^{i\lambda(y-y'-n\eta)}
\cdot\left\{\frac{1}{|\lambda|}\begin{bmatrix} 1 & 0 \\ 0 & 1 \end{bmatrix}\right.\\
&\left.  -\left(x-x'-n\xi-\frac{4-3e^{-|\lambda|d}}{1-e^{-|\lambda|d}}d\right)
\begin{bmatrix}1 & i\sign(\lambda)\\ i\sign(\lambda)& -1\end{bmatrix}\right\}
d\lambda.
\ea
\ee
And the expressions for $\P_1^{east}$, $\P_2^{east}$ can be derived similarly.
\end{remark}

For both the modified Stokeslet and Stokeslet, the associated pressurelet
is given by:
\be
   {\bf p}(\bt)=\frac{1}{2\pi}\frac{\bt}{|\bt|^2}=\frac{1}{2\pi}\nabla \log|\bt|.
   \label{pressurelet}
\ee
Thus, the periodizing operators for the pressurelet can 
be obtained by simply differentiating those for the 
logarithmic kernel in \cref{sec:lap}, summarized in the following two
theorems.
\begin{theorem} \label{south_thm_stokes_pressure}
Let $\bS = \{ \bs_j \, | \, j = 1,\dots,N_S \}$ and 
$\bT = \{ \bt_l \, | \, l = 1,\dots,N_T \}$ 
denote collections of sources and targets
in the unit cell $\C$ and let $\P_2^{south}$ denote the 
$N_T \times N_S$ block matrix which is the periodizing
operator for all ``south" sources for the pressure for both
the modified Stokes and Stokes equations.
Given a precision $\epsilon$, let $M$ be given by
\cref{mpk0s_lap}.
With $\alpha_m, Q_m$ given in \cref{alphaqdef},
let 
$\bL^{south}, \bL^{north} \in \mathbb{C}^{N_T \times (2M+1)}$ and 
$\bR^{south}, \bR^{north} \in \mathbb{C}^{(2M+1) \times 2N_S}$ be dense matrices
and let 
\[ \bD^{south}, \bD^{north} \in \mathbb{C}^{(2M+1) \times (2M+1)} \]
be diagonal matrices with
\be\label{lowranksouth_stokes_pressure}
\begin{split}
&\bL^{south}(l,m) \ \ =
  e^{-|\alpha_m| y_l} e^{i \alpha_m x_l}, \quad {\rm for}\ m \neq 0, \\
&\bL^{north}(l,m) \ \ =
e^{|\alpha_m| y_l} e^{i \alpha_m x_l}, \quad {\rm for}\ m \neq 0, \\
&\bL^{south}(l,0) \ \ =   
y_l, \quad
\bL^{north}(l,0) \ \ =   
y_l, \\
&\bR^{south}(m,j)\  =  
e^{|\alpha_m| y'_j} \, e^{- i \alpha_m x'_j} [i\alpha_m\,\, -|\alpha_m|]
\quad {\rm for}\ m \neq 0, \\
&\bR^{north}(m,j)\  =  
e^{-|\alpha_m| y'_j} \, e^{- i \alpha_m x'_j} [i\alpha_m\,\, |\alpha_m|]
\quad {\rm for}\ m \neq 0, \\
&\bR^{south}(0,j) \ \ =   
[0\,\, 1], \quad
\bR^{north}(0,j) \ \ =   
[0\,\, 1], \\ 
&\bD^{south}(m,m) = 
-\frac{1}{4\pi |m|} 
\frac{e^{-2Q_{m}}}{1 - e^{-Q_{m}}}  \quad {\rm for}\ m \neq 0 \\
&\bD^{south}(0,0) \ \  =  
\frac{1}{2d\eta}, \quad \bD^{north} = \overline{\bD^{south}}.
\end{split}
\ee
Then
\be
\ba
\P_2^{south} &= \bL^{south} \, \bD^{south} \, \bR^{south} + O(\epsilon),\\
\P_2^{north} &= \bL^{north} \, \bD^{north} \, \bR^{north} + O(\epsilon).
\ea
\ee
\end{theorem}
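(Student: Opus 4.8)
The plan is to deduce \cref{south_thm_stokes_pressure} from the Poisson-kernel results \cref{south_thm_lap,north_thm_lap} by differentiation, using the fact that the pressurelet is a gradient: with $G(\bt,\bs)=\tfrac{1}{2\pi}\log(1/\|\bt-\bs\|)$ the logarithmic kernel of \cref{sec:lap}, one has $\p(\bt-\bs)=\tfrac{1}{2\pi}\nabla_{\bt}\log|\bt-\bs|=-\nabla_{\bt}G(\bt,\bs)$, and this relation is common to the Stokes and modified Stokes equations (taking the divergence of the momentum equation annihilates the $\beta^2\bu$ term since $\nabla\cdot\bu=0$). Hence the south-periodized pressurelet equals $-\nabla_{\bt}K_2^{south}(\bt,\bs)$ for the logarithmic $K_2^{south}$, and, writing $\bq_j=(q_j^{(1)},q_j^{(2)})$ for the $j$th force, the south contribution to the pressure at a target $\bt_l$ is $-\sum_j\bq_j\cdot\nabla_{\bt_l}K_2^{south}(\bt_l,\bs_j)$. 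I would build the factorization $\bL^{south}\bD^{south}\bR^{south}$ by applying $\nabla_{\bt_l}$ to the factorization of $K_2^{south}$ in \cref{south_thm_lap} term by term.

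For the modes $m\ne 0$, the target-dependent factor of the $m$th term in \cref{south_thm_lap} is $e^{-|\alpha_m|y_l}e^{i\alpha_m x_l}$, and $\nabla_{\bt_l}$ of it is $(i\alpha_m,-|\alpha_m|)\,e^{-|\alpha_m|y_l}e^{i\alpha_m x_l}$. Moving the constant vector $(i\alpha_m,-|\alpha_m|)$ onto the source factor turns the Poisson $\bR^{south}$ into the $(2M+1)\times 2N_S$ matrix of $1\times 2$ blocks $\bR^{south}(m,j)=e^{|\alpha_m|y'_j}e^{-i\alpha_m x'_j}\,[\,i\alpha_m\ \ -|\alpha_m|\,]$, leaves $\bL^{south}$ unchanged, and flips the overall sign (because $\p=-\nabla G$), giving $\bD^{south}(m,m)=-\tfrac{1}{4\pi|m|}\tfrac{e^{-2Q_m}}{1-e^{-Q_m}}$ with $\alpha_m,Q_m$ as in \cref{alphaqdef}; a short check using $|\alpha_m|=2\pi|m|/d$ shows the $1/|m|$ in $\bD^{south}$ exactly cancels the $|\alpha_m|$ carried by $\bR^{south}$, so no factor growing in $|m|$ is introduced. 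The north matrices follow from the same operation on \cref{north_thm_lap}: the only differences are that $\partial_{y_l}e^{|\alpha_m|y_l}=+|\alpha_m|e^{|\alpha_m|y_l}$, which flips the second entry of the row block, and complex conjugation of the geometric-series factor, so $\bD^{north}=\overline{\bD^{south}}$.

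The exceptional mode $m=0$ is the crux, and it cannot be obtained by differentiating the regularized $m=0$ entry $-y_ly'_j/(2d\eta)$ of \cref{south_thm_lap}: that entry results from a $\beta\to 0$ limit performed only after summation against a neutral vector, so its $\bt_l$-dependence is not the true kernel's. Instead I would return to the $\beta$-regularized $m=0$ term appearing in \cref{pk0s}, namely $\tfrac{1}{2d}\tfrac{1}{\beta}\tfrac{e^{-2\beta\eta}}{1-e^{-\beta\eta}}e^{-\beta(y_l-y'_j)}$, differentiate in $y_l$ (the $1/\beta$ cancelling the $-\beta$ from the exponential), multiply by $q_j^{(2)}$, sum over $j$, Taylor-expand $e^{-2\beta\eta}/(1-e^{-\beta\eta})$ and $e^{-\beta(y_l-y'_j)}$, and invoke neutrality of the net $y$-force exactly as in the proof of \cref{south_thm_lap}. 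The $\bt_l$-independent contributions — one of which diverges in the absence of $y$-force neutrality — are absorbed into the arbitrary additive constant of the pressure, and what survives is linear in $y_l$; comparing with \cref{lowranksouth_stokes_pressure} then reads off $\bL^{south}(l,0)=y_l$, $\bR^{south}(0,j)=[\,0\ \ 1\,]$ and $\bD^{south}(0,0)=\pm 1/(2d\eta)$, with the north row identical. Physically this surviving term is the non-periodic linear pressure ramp produced by an unbalanced total force in the $y$-direction; for a genuinely periodic configuration ($\sum_j\bq_j=0$) it vanishes, and moreover the divergent $m=0$ constants of the south and north operators cancel against each other.

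Finally, the truncation estimate follows the geometric-series argument of \cref{esbound1}. Differentiation multiplies the $m$th term by a factor of size $O(|\alpha_m|)$, compensated by the $1/|m|$ in $\bD^{south}(m,m)$, so each surviving term is bounded by a constant times $e^{-2\pi|m|\eta/d}/(1-e^{-2\pi\eta/d})$; the tail $\sum_{|m|>M}$ is therefore geometric, and the $M$ of \cref{mpk0s_lap} suffices up to an $O(\log A+\log\log(1/\epsilon))$ adjustment harmlessly absorbed into the $O(\epsilon)$ remainder. The one genuine obstacle is the $m=0$ step: correctly separating the droppable (possibly infinite) additive constant from the true $y_l$-linear term and fixing its sign, since the obvious shortcut — differentiating the already-regularized Poisson formula — produces a constant rather than the required linear term.
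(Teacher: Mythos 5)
Your handling of the $m\neq 0$ modes is exactly the paper's route: the paper obtains this theorem by noting that the pressurelet is $\frac{1}{2\pi}\nabla\log|\bt|$ and differentiating the Laplace periodizing operators of \cref{south_thm_lap,north_thm_lap}, and your bookkeeping (moving the vector $(i\alpha_m,\,-|\alpha_m|)$ into $\bR$, the sign into $\bD$, conjugation for the north part) reproduces the $m\neq 0$ entries of \cref{lowranksouth_stokes_pressure} verbatim; the tail bound is the same geometric-series argument as \cref{esbound1}. That part is complete.

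The genuine gap is the $m=0$ block, which you flag but do not close. Carry out the $\beta$-regularized computation exactly as you describe — differentiate $\frac{1}{2d\beta}\frac{e^{-2\beta\eta}}{1-e^{-\beta\eta}}e^{-\beta(y_l-y'_j)}$ in $y_l$, multiply by $q_j^{(2)}$, sum, expand in $\beta$ — and then impose $\sum_j q_j^{(2)}=0$ as you say: what survives is the target-independent quantity $\frac{1}{2d\eta}\sum_j y'_j q_j^{(2)}$, because the $y_l$-linear piece carries the coefficient $\sum_j q_j^{(2)}$ and is annihilated by the very neutrality you invoke. So your assertion that ``what survives is linear in $y_l$'' contradicts the preceding step; the sign of $\bD^{south}(0,0)$ is left as $\pm$; and the entries $\bL^{south}(l,0)=y_l$, $\bR^{south}(0,j)=[0\;\;1]$ are in the end read off from the statement rather than derived. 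To close the argument you must do one of two things explicitly: (i) keep $\beta>0$ and expand \emph{without} imposing neutrality, retaining the $y_l$-linear term and discarding only the target-independent constants — note that this expansion actually produces the coefficient $-\frac{1}{2d\eta}\sum_j q_j^{(2)}$ for the south sum, so the sign in \cref{lowranksouth_stokes_pressure} must then be reconciled or declared immaterial; or (ii) observe that the south/north pressure sums only converge on force distributions with zero net vertical component, on which the $m=0$ block of \cref{lowranksouth_stokes_pressure} contributes identically zero, so that any such convention for the $m=0$ entries (including the paper's) yields the stated factorization on the operator's domain of definition. Either statement finishes the proof, but as written your proposal establishes neither, and this $m=0$ step is precisely where the naive ``differentiate the regularized Poisson formula'' shortcut and the $\beta$-first limit genuinely disagree.
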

\begin{theorem} \label{west_thm_stokes_pressure}
Under the hypotheses of \cref{west_thm_lap},
let 
$\bL_1^{west}, \bL_1^{east} \in \mathbb{C}^{N_T \times N^1_q}$,
$\bL_2^{west}, \bL_2^{east} \in \mathbb{C}^{N_T \times N^2_q}$,
be dense matrices and 
let $\bD_1^{e/w}$, $\bD_2^{west}$ and $\bD_2^{east}$ 
be diagonal matrices of dimension $N^1_q$ and $N^2_q$
defined in \cref{lowrankwest_lap}. Let
$\bR_1^{west}, \bR_1^{east} \in \mathbb{C}^{N^1_q \times 2N_S}$,
$\bR_2^{west}, \bR_2^{east} \in \mathbb{C}^{N^2_q \times 2N_S}$
be dense matrices with
\be\label{lowrankwest_stokes_pressure}
\ba
\bR_1^{west}(n,j) &=  
e^{\lambda_{n,1} x'_j} \, e^{-i \lambda_{n,1} y'_j}\, [\lambda_{n,1}\quad -i\lambda_{n,1}] \, , \\
\bR_2^{west}(n,j) &=  
e^{\lambda_{n,2} x'_j} \, e^{-i \lambda_{n,2} y'_j} \,[\lambda_{n,2}\quad -i\lambda_{n,2}] \, ,  \\
\bR_1^{east}(n,j) &=  
e^{-\lambda_{n,1} x'_j} \, e^{-i \lambda_{n,1} y'_j} \,[-\lambda_{n,1}\quad -i\lambda_{n,1}] \, , \\
\bR_2^{east}(n,j) &=  
e^{-\lambda_{n,2} x'_j} \, e^{-i \lambda_{n,2} y'_j} \,[-\lambda_{n,2}\quad -i\lambda_{n,2}] \, .
\ea
\ee
Let
\[
\ba
\P_1^{west} &= \bL_1^{west} \, \bD_1^{e/w} \, \bR_1^{west}, \\
\P_2^{west} &= \bL_2^{west} \, \bD_2^{west} \, \bR_2^{west}. \\
\P_1^{east} &= \bL_1^{east} \, \bD_1^{e/w} \, \bR_1^{east}, \\
\P_2^{east} &= \bL_2^{east} \, \bD_2^{east} \, \bR_2^{east}. \\
\ea
\]
Then the real parts of the vectors 
\[  \P_1^{west} \bq, \ \P_1^{east} \bq, \ 
  P_2^{west} \bq, \P_2^{east} \bq 
\]
denote the contributions from the west or east sources to the 
corresponding periodizing pressures.
\end{theorem}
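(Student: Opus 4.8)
The plan is to obtain \cref{west_thm_stokes_pressure} directly from the Poisson (logarithmic-kernel) results of \cref{west_thm_lap} by differentiating in the target variable. Indeed, \eqref{pressurelet} exhibits the pressurelet as $\mathbf{p}(\bt)=\tfrac{1}{2\pi}\nabla\log|\bt| = -\nabla_\bt G(\bt)$, where $G(\bt,\bs)=\tfrac{1}{2\pi}\log(1/|\bt-\bs|)$ is the free-space Poisson Green's function; since this same $\mathbf{p}$ is the pressure of \emph{both} the Stokeslet and the modified Stokeslet, a single set of formulas serves both equations. Hence the ``west'' pressurelet kernel is $\K_1^{west}(\bt,\bs)=-\nabla_\bt K_1^{west}(\bt,\bs)$, regarded as a $1\times2$ row block, where $K_1^{west}$ is the Poisson far-field kernel whose factorization $\P_1^{west}=\bL_1^{west}\bD_1^{e/w}\bR_1^{west}$ was established in \cref{west_thm_lap}; the ``east'' kernel and the doubly periodic kernels $\K_2^{west}$, $\K_2^{east}$ are treated identically.

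First I would note that in those factorizations the only dependence on the target $\bt_l=(x_l,y_l)$ sits in the single plane-wave factor carried by $\bL$, namely $\bL_1^{west}(l,n)=e^{-\lambda_{n,1}x_l}e^{i\lambda_{n,1}y_l}$: the diagonal matrix $\bD_1^{e/w}$ and the source matrix $\bR_1^{west}$, together with (in the doubly periodic case) the inner sum over $n\in\{-1,0,1\}$ and the bracketed exponentials, carry no $\bt$. Applying $-\nabla_\bt=-(\partial_{x_l},\partial_{y_l})$ therefore multiplies $\bL_1^{west}(l,n)$ by the frequency-linear vector $(\lambda_{n,1},-i\lambda_{n,1})$ (and by $(-\lambda_{n,1},-i\lambda_{n,1})$ for the east kernels, whose factor is $e^{+\lambda_{n,1}x_l}$). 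Since this $1\times2$ vector is independent of $l$, it can be folded onto the source side: each scalar column of $\bR$ becomes the $1\times2$ row block $e^{\lambda_{n,1}x'_j}e^{-i\lambda_{n,1}y'_j}\,[\lambda_{n,1}\quad -i\lambda_{n,1}]$ in the west case (respectively $e^{-\lambda_{n,1}x'_j}e^{-i\lambda_{n,1}y'_j}\,[-\lambda_{n,1}\quad -i\lambda_{n,1}]$ in the east case), which is exactly \eqref{lowrankwest_stokes_pressure}, while $\bL$ and $\bD$ stay as in \cref{west_thm_lap}. Taking real parts at the end, just as in \cref{west_thm_lap}, then yields the scalar periodic pressure acting on the vector $\bq$ of force components. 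The $K_2$ cases and the relations $\bD_2^{east}=\overline{\bD_2^{west}}$ follow from the substitutions $x_l\leftrightarrow -x_l$, $x'_j\leftrightarrow -x'_j$ already used in the earlier east/west theorems.

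Two points require a short check, neither a serious obstacle. (i) Differentiation preserves the $O(\epsilon)$ accuracy of the discretized representations of \cref{west_thm_lap}. The multiplier $\lambda_{n,1}$ produced by $-\nabla_\bt$ is bounded on the quadrature support, and --- exactly as with any physical quantity in \cref{west_thm_lap} --- it cancels the $1/\lambda$ carried by $\bD_1^{e/w}$, so that, after invoking the charge-neutrality hypothesis (which here means vanishing net force) to kill the residual $1/\lambda$ behaviour of $e^{-2\lambda d}/(1-e^{-\lambda d})$, the pressurelet integrand has the benign form $\int_0^\infty e^{-\lambda z}M(\lambda)\,d\lambda$ with $M$ smooth; this is precisely the form the stored generalized Gaussian quadrature rules of \cref{west_thm_lap} were built to integrate, so the same nodes and weights remain valid. (ii) The overall signs of the row blocks must be tracked through $\mathbf{p}=-\nabla_\bt G$ with $G=\tfrac{1}{2\pi}\log(1/|\bt-\bs|)$: the reciprocal inside $G$ cancels the sign produced by $\partial_{x_l}e^{-\lambda_{n,1}x_l}$, which is why $\bD_1^{e/w}$ is reused verbatim and all sign information lives in the $\pm\lambda_{n,1}$ entries of the row blocks. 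The only genuine work is therefore clerical: bookkeeping these signs, deciding into which of $\bL$, $\bD$, $\bR$ the differentiation multiplier is absorbed, and carrying this through the doubly periodic inner sum.
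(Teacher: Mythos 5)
Your proposal is correct and follows the paper's own route: the paper obtains these operators precisely by differentiating the logarithmic-kernel factorizations of \cref{west_thm_lap}, which is what you do, with the frequency multipliers $(\pm\lambda_{n,\cdot},-i\lambda_{n,\cdot})$ folded onto the source side and $\bL$, $\bD$ reused verbatim. Your added checks (the extra factor of $\lambda$ plus force neutrality restoring smoothness of the integrand, and the sign bookkeeping through $\mathbf{p}=-\nabla_\bt G$) are consistent with the paper's remarks in the proof of \cref{west_thm_lap}.
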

\Bk

\section{Direct and NUFFT-accelerated methods
for periodizing operators} \label{sec:fastalg}

The low-rank factorizations in the preceding sections
provide a simple fast algorithm for imposing periodic boundary
conditions. It is easy to see that applying the operators from 
right to left in expression of the form
\[ \P \bq = \bL \bD \bR  \bq  \]
requires
$O(r(N_S+N_T))$ work, where $r$ is
the rank of $\P$ (and the dimension of $\bD$).
Because the rank $r$ grows
linearly with the aspect ratio $A=d/\eta$, we describe a more 
involved method
which uses the NUFFT to achieve a computational complexity
of the order $O(\log(1/\epsilon)(r\log r+(N_S+N_T)\log(1/\epsilon)))$.

\begin{remark}
In the singly periodic case, a fast algorithm is required when
the height of the unit cell is much greater than its width - that is,
when $A=d/\eta \ll 1$. Recall that in the doubly periodic case, 
we have defined the orientation of the unit cell so that $A>1$ and
a fast algorithm is needed only when $A \gg 1$.
\end{remark}

\subsection{NUFFT acceleration}

To be concrete, we focus here on the matrix-vector products
\[
\bc = \bR^{south} \, \bq,\,
\bw = \bD^{south} \, \bc,\,
\bu = \bL^{south} \, \bw
\]
for the modified Hemholtz equation 
in \cref{south_thm}, so that $\bu = \P^{south} \bq$
for a unit cell with large aspect ratio.
Before turning to a general distribution of sources, 
let us consider
the case where all sources have the same
$y$-coordinate: 
$\bS = \{ \bs_j \, | \, j = 1,\dots,N_S \}$ with
$\bs_j = (x'_j,\overline{y})$.

Focusing again on the ``south" sources, we have
$\bc =  \bR^{south} \bq$ with
\[ 
c_m = e^{-\chi_m \overline{y}} 
\sum_{j=1}^{N_S} e^{i \alpha_m x_j} q_j.
\]
This is a sum of precisely the form 
\cref{eq:nufft1} and can be computed in $O(r\log r+N_S \log(1/\epsilon))$
work using the NUFFT, where $r=2M+1$ is the rank of $\bR^{south}$.

The next thing to notice is that the entries of $\bR^{south}$ in the
general case involves non-oscillatory functions in the $y$-direction.
In fact, if we define the function $f(y) = e^{-\chi y}$, and
assume $f(y)$ is given at Gauss-Legendre nodes 
$\{ \overline{y}_1, \dots, \overline{y}_{M_{\rm GL}} \}$, then
\[ 
e^{-\chi_m y'_j} \approx
\sum_{n=1}^{M_{\rm GL}} \gamma(y'_j,n) 
e^{-\chi_m \overline{y}_n} \; ,
\]
with spectral accuracy, where
\[ 
\gamma(t,n) = \frac{\frac{\sigma_n}{t-\overline{y}_n}}
      {\sum_{\ell=1}^{M_{\rm GL}} \frac{\sigma_\ell}{t-\overline{y}_\ell}} 
\]
are the interpolation coefficients and
the weights $\sigma_\ell$ are defined as in
\eqref{eq:baryweights}.
Thus, we may write
\be \label{fastanterp} 
\ba
c_m &= 
\sum_{j=1}^{N_S} 
e^{-\chi_m y'_j} e^{i \alpha_m x'_j} q_j \\
&\approx \sum_{j=1}^{N_S} 
\sum_{n=1}^{M_{\rm GL}} {\gamma}(y'_j,n) 
e^{-\chi_m \overline{y}_n} e^{i \alpha_m x'_j} q_j \\
&= 
\sum_{n=1}^{M_{\rm GL}} 
e^{-\chi_m \overline{y}_n} \sum_{j=1}^{N_S} e^{i \alpha_m x'_j} 
[ {\gamma}(y'_j,n) q_j].
\ea
\ee

Thus, by carrying out a total of $M_{\rm GL}$ applications of the NUFFT,
we can obtain $\bc$ with $O(M_{\rm GL} (2M+1))$ additional work (the outer
loop in the last equation of \cref{fastanterp} carried out for each $m$).
 
The treatment of $\bu = \bL^{south} \bw$ is nearly the same.
Using the interpolation formula
\[ 
e^{\chi_m y_l} \approx
\sum_{n=1}^{M_{\rm GL}} {\gamma}(y_l,n) 
e^{\chi_m \overline{y}_n} \, ,
\]
we have
\be \label{fastinterp} 
\ba
u_l &= 
\sum_{m=-M}^{M} 
e^{\chi_m y_l} e^{i \alpha_m x_l} w_m \\
&\approx 
\sum_{m=-M}^{M} 
\sum_{n=1}^{M_{\rm GL}} {\gamma}(y_l,n) e^{\chi_m \overline{y}_n} 
e^{i \alpha_m x_l} \\
&= 
\sum_{n=1}^{M_{\rm GL}} {\gamma}(y_l,n)  
\sum_{m=-M}^{M} 
e^{i \alpha_m x_j}  [w_m e^{-\chi_m \overline{y}_n}].
\ea
\ee

Again,
by carrying out a total of $M_{\rm GL}$ applications of the NUFFT,
we obtain $\bu$ with $O(M_{\rm GL} N_T)$ additional work (the outer
loop in the last equation in \cref{fastinterp}, 
carried out for each $l = 1,\dots,N_T$).
The reader will note that the fast application of $\bL$ is essentially
that of computing the potential on a sequence of horizontal lines in the 
unit cell, followed by interpolation in the $y$-direction.
Because it is the adjoint of the interpolation matrix that is used
in applying $\bR$, that dual process is sometimes called {\em anterpolation}.

The application of $\bL$ and $\bR$ for all of the operators described
in the preceding section is essentially the same, and illustrated in
\cref{fig:nufftgrids}. 

It remains only to estimate the number of interpolation nodes needed,
addressed in the following theorem.

\begin{theorem} \label{nufft_interpolation_thm}
  Suppose that the Green's function $G(\bt,\bs)$ is real analytic
  for $\bt\ne \bs$. Then,
  as a function of $y$ (that is, the $y$-coordinate of the target point $\bt$),
  the kernels $K_2^{south}(\bt,\bs)$,
  $K_2^{north}(\bt,\bs)$ can be well approximated by their
  interpolating polynomials $p_{\rm GL}[K_2^{south}]$, $p_{\rm GL}[K_2^{north}]$
  using Gauss-Legendre interpolation nodes and the following error estimates hold:
  \be
  \ba
  \|K_2^{south}(\bt,\bs)-p_{\rm GL}[K_2^{south}](\bt,\bs)\|&\le C \rho_0^{-M_{\rm GL}},\\
  \|K_2^{north}(\bt,\bs)-p_{\rm GL}[K_2^{north}](\bt,\bs)\|&\le C \rho_0^{-M_{\rm GL}},
  \ea\label{eq:interpolationerror}
  \ee
  for $y\in [-\eta/2,\eta/2]$. Here
  \be
  \rho_0 = 3+\sqrt{8}\approx 5.828.
  \ee
  The same estimates hold for the interpolation errors when both kernels
  are approximated by interpolating polynomials using Gauss-Legendre
  interpolation nodes for the $y$-coordinate of the source $\bs$, 
  which we denote by $y'$, for $y'\in [-\eta/2,\eta/2]$.
\end{theorem}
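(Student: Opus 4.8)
The plan is to reduce the theorem to a statement about the domain of analyticity of $K_2^{south}$ (and $K_2^{north}$) in the \emph{complexified} target coordinate $y$, and then to invoke the barycentric interpolation bound \eqref{eq:interpbound}.

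First I would work from the period-summed representation \eqref{pk0s} for the modified Helmholtz equation (the Poisson case follows from the same expression after the $\beta\to0$ limit taken in \cref{south_thm_lap}). Each term of that series depends on $y$ only through the entire function $e^{-\chi_m y}$, all other factors ($e^{\chi_m y_j'}$, $e^{i\alpha_m(x-x_j')}$, $\chi_m^{-1}$, and $e^{-2Q_m}/(1-e^{-Q_m})$) being independent of $y$. Using the elementary bounds $\chi_m\ge 2\pi|m|/d$ and $|e^{-2Q_m}/(1-e^{-Q_m})|\le e^{-2\chi_m\eta}/(1-e^{-\chi_m\eta})$ already established in the proof of \cref{south_thm}, the $m$-th term is dominated, for complex $y$, by a constant multiple of $\chi_m^{-1}e^{-\chi_m(\operatorname{Re}y-y_j'+2\eta)}$. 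Since $y_j'\le\eta/2$ for sources in $\C$, this series converges absolutely and uniformly on compact subsets of the half-plane $H=\{y:\operatorname{Re}y>-3\eta/2\}$, so $K_2^{south}(\bt,\bs)$ extends, as a function of $y$, to a holomorphic function on $H$ whose modulus is bounded on each compact subset of $H$ uniformly in $\bt,\bs\in\C$. This is sharp: taking $y_j'=\eta/2$ and a source/target pair with $x-x_j'+2\xi\in d\mathbb{Z}$ produces, directly from the method-of-images sum, a genuine logarithmic singularity at the real point $y=-3\eta/2\in\partial H$.

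Next I would rescale $y=\tfrac{\eta}{2}t$, mapping $[-\eta/2,\eta/2]$ onto $[-1,1]$ with foci $\pm1$. The Bernstein ellipse $E_\rho$ attains its leftmost abscissa $-\tfrac{\eta}{4}(\rho+\rho^{-1})$ in the $y$-plane, which lies in $H$ exactly when $\rho+\rho^{-1}<6$, i.e.\ when $\rho<\rho_0$, where $\rho_0=3+\sqrt8\approx5.828$ is the larger root of $\rho^2-6\rho+1=0$. Hence for each $\rho\in(1,\rho_0)$ the kernel is analytic on a neighborhood of $\overline{E_\rho}$ and bounded there by some $C_\rho$ uniform in $\bt,\bs\in\C$, so \eqref{eq:interpbound} gives $\|K_2^{south}-p_{\rm GL}[K_2^{south}]\|\le(1+\Lambda_{M_{\rm GL}})\,2C_\rho/(\rho^{M_{\rm GL}}(\rho-1))$. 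Letting $\rho\uparrow\rho_0$ and absorbing the slowly growing $\Lambda_{M_{\rm GL}}=O(\sqrt{M_{\rm GL}})$ and $C_\rho=O(\log(1/(\rho_0-\rho)))$ factors in the standard way yields \eqref{eq:interpolationerror}. The kernel $K_2^{north}$ is handled by the reflection $y\mapsto-y$ (equivalently, its analogue of \eqref{pk0s} converges for $\operatorname{Re}y<3\eta/2$), giving the identical estimate; because \eqref{pk0s} is symmetric in $y$ and $y'$ up to complex conjugation, the same argument applied to the source coordinate — $K_2^{south}$ analytic for $\operatorname{Re}y'<3\eta/2$, $K_2^{north}$ for $\operatorname{Re}y'>-3\eta/2$ — gives the final assertion. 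The hypothesis that $G$ be real analytic off the diagonal enters only through the fact that each kernel under consideration is a function of $\|\bt-\bs\|^2$ whose sole finite singularity is at $\|\bt-\bs\|^2=0$, so the $(m,n)$-th image term is holomorphic in $y$ except on $\operatorname{Re}y=y_j'+n\eta\le-3\eta/2$; this gives the same half-plane $H$ for the Poisson kernel and, entry-by-entry, for the (modified) Stokes kernels built from $G_{\rm MB}$ and its second derivatives, so no new argument is needed there.

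I expect the one genuinely delicate point to be the sharpness/uniformity bookkeeping in the last step: because a singularity of the continued kernel can sit exactly on $\partial E_{\rho_0}$ (at $y=-3\eta/2$) for some configurations of sources and targets in $\C$, one cannot simply take $\rho=\rho_0$, so the constant $C$ in the statement must be read as valid for every $\rho<\rho_0$, or as obtained after absorbing a $\mathrm{poly}(M_{\rm GL})\log M_{\rm GL}$ factor. Establishing that the series \eqref{pk0s} really is the analytic continuation on all of $H$ (and not merely on a horizontal slab), and that no further singularities are introduced by the factors $\chi_m^{-1}$ or $(1-e^{-Q_m})^{-1}$, is the part that needs to be written out with care; everything else is the bound of \cref{south_thm} reused for complex $y$ together with an elementary Joukowski-map computation.
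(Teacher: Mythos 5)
Your proposal is correct, and its endgame is exactly the paper's: rescale $y$ to $[-1,1]$, observe that the nearest obstruction to analyticity sits at the rescaled point $-3$, so the admissible Bernstein ellipses are those with $\rho<\rho_0=3+\sqrt{8}$, and invoke the bound \eqref{eq:interpbound}; you also flag the same caveat the paper relegates to the remark following the theorem, namely that one cannot take $\rho=\rho_0$ exactly and must shrink the ellipse (or absorb a $\mathrm{poly}(M_{\rm GL})$ factor) to keep $C$ finite. Where you differ is in how analyticity on the ellipse is justified. The paper argues in one line directly from the image-sum definition \eqref{ksplitting2}: the images indexed by $n\le -2$ have $y$-coordinate at most $-3\eta/2$, and since $G$ is real analytic off the diagonal the kernel, as a function of the target's $y$, has its closest singularity there. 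You instead continue the plane-wave representation \eqref{pk0s} to the complexified half-plane $\operatorname{Re}y>-3\eta/2$, reusing the bounds $\chi_m\ge 2\pi|m|/d$, $|e^{-2Q_m}/(1-e^{-Q_m})|\le e^{-2\chi_m\eta}/(1-e^{-\chi_m\eta})$ from the proof of \cref{south_thm} to get locally uniform convergence, and you add a sharpness example at $y=-3\eta/2$. Your route buys a cleaner and more quantitative analyticity statement — in particular it sidesteps the conditional convergence of the raw image sums in the Poisson and Stokes cases and gives bounds on $C_\rho$ uniform over $\bt,\bs\in\C$ — at the cost of being tied to the specific Fourier representations, whereas the paper's argument uses only the stated hypothesis that $G$ is real analytic away from the diagonal and therefore covers all the kernels (including the Stokes blocks and the source-variable interpolation) with a single geometric sentence. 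One small caution: your parenthetical $C_\rho=O(\log(1/(\rho_0-\rho)))$ is specific to logarithmic-type singularities and would be a power for the Stokeslet-type kernels, but since the argument only needs some finite $C_\rho$ for each $\rho<\rho_0$, this does not affect the conclusion.
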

\begin{proof}
  We will only prove the target interpolation result for $K_2^{south}(\bt,\bs)$,
  since the proofs of the other three cases are almost identical.
  By the definition of $K_2^{south}(\bt,\bs)$ in \cref{ksplitting2}, all
  image sources are separated from the fundamental unit cell by at least
  one cell. That is, for any target $\bt$ in the fundamental unit cell
  with $y\in [-\eta/2,\eta/2]$, the closest image source in the infinite double 
  sum is at $-3\eta/2$. Rescaling the interval $[-\eta/2,\eta/2]$ to
  the standard interval $[-1,1]$, we observe that as a 
  function of $\tilde{y}=2y/\eta$,
  the closest singularity of $K_2^{south}(\bt,\bs)$
  is at $-3$. That is, the Bernstein ellipse with foci at $\pm 1$ in this case has
  semi-major axis length is $a=3$, from which we determine the
  semi-minor axis length to be $b=\sqrt{a^2-c^2}=\sqrt{3^2-1^2}=\sqrt{8}$. 
  The result follows from \cref{eq:interpbound}.
\end{proof}

\begin{remark}
  As discussed in \cref{sec:interpolation}, the constant 
  $C$ in \cref{eq:interpolationerror}
  is equal to $\|G\|_\infty$ in the closed domain bounded by the Bernstein ellipse.
  Since most Green's functions are singular when $\bt=\bs$, 
  $\|G\|_\infty$ is unbounded on the Bernstein ellipse. To make the error
  bound useful, it suffices 
  to shrink the Bernstein ellipse a little to make $C$ finite. 
  In practice, the convergence rate is typically very close
  to what is stated in \cref{nufft_interpolation_thm} and interpolation using
  $8$ or $16$ Legendre nodes leads to six or twelve digit accuracy, respectively.
\end{remark} \Bk    

\begin{figure}
  \centering
  \includegraphics[width=0.8\textwidth]{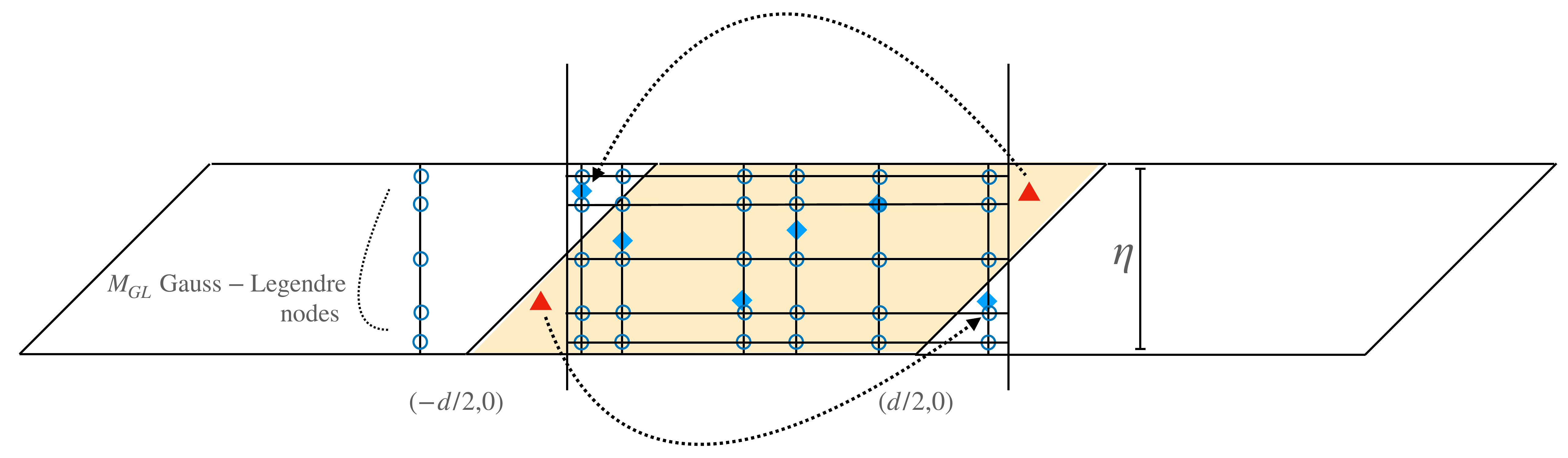}
  \caption{\label{fig:nufftgrids} An illustration of the
    auxiliary grids used for the accelerated algorithm.
    Blue diamonds represent source locations.
    We wrap points (red triangles) that fall outside the 
    rectangular box of dimenson $d \times \eta$ (centered at the unit
    cell center) to their corresponding images within the rectangle.
    The blue circles are the $M_{\rm GL}$ scaled Gauss-Legendre nodes 
    on $[-\eta/2,\eta/2]$ with the same $x$-coordinates as the 
    original sources themselves. In the first step of the 
    method, we use the adjoint of the one-dimensional 
    interpolation matrix to create fictitious sources 
    whose $x$-coordinate is that of the original source
    but whose $y$-coordinate is one of the $M_{\rm GL}$
    Gauss-Legendre nodes at a cost of 
    $O(N_S M_{\rm GL})$ work. The NUFFT then provides a fast
    algorithm for computing $\bR^{south} \bq$
    at a cost of $O(M_{\rm GL} \cdot [(r+N_S)\log(r+N_S)])$ work.
    Likewise, once 
    $\bc = \bD^{south} \, \bR^{south} \bq$ is obtained, the NUFFT
    can be used to evaluate the potential on a tensor-product
    grid with $x$-coordinates corresponding to target locations
    and $y$-coordinates given by the $M_{\rm GL}$ Gauss-Legendre nodes. 
    (wrapped to the rectangular cell) at a cost of 
    $O(M_{\rm GL} \cdot [(r+N_T)\log(r+N_T)])$ work.
    Interpolation yields the field at the desired target points 
    at a cost of $O(N_T M_{\rm GL})$ work.
  }
\end{figure}
\section{Numerical results}  \label{sec:results}
We have implemented the algorithms described in this paper in Fortran. 
Our implementation
uses the fmm2d library \cite{fmm2dlib} 
for the free-space FMMs and the finufft package 
\cite{finufftlib,finufft}
for the NUFFTs. The code
is complied using gfortran 9.3.0 with -O3 option. The results shown in this section
were obtained on a single core of a laptop with Intel(R) 2.40GH i9-10885H CPU.

We first test the performance of the code in the high accuracy regime. \Cref{table1}
shows the results for the modified Helmholtz kernel with precision set
to $10^{-12}$. $40,000$ source points are placed 
in the fundamental unit cell with a uniform random distribution, with
$500$ equispaced target points on each side of the unit cell to check the
enforcement of periodic conditions. 
In the table, $A$ is the aspect ratio (\cref{aspectdef}),
$t_{\rm per}$ is the time for applying the periodizing operator, 
$t_{\rm FMM}$ is the time
for the FMM call with sources in the near region $\N$ consisting of
$(2m_0+1)\times (2n_0+1)$ copies of the unit cell.
$n_0=1$ in the doubly periodic case and 
$n_0=0$ in the singly periodic case. $m_0=1,2,$ or $3$, depending on the
precise shape of the unit cell.
$t_{\rm total}$ is the total computational time and
$t_{\rm FMM}^0$ is the time required by the free-space FMM, with sources restricted
to the fundamental unit cell alone for reference as a lower bound.
All times are measured in seconds and the error
is the estimated relative $l^2$ error in satisfying periodicity
(i.e., the potential difference between the right and left sides for 
the singly periodic case, and the sum of potentials differences in both $x$
and $y$ for the doubly periodic case). $P_1$ and $P_2$ denote the imposition
of periodicty in one or two dimensions, respectively.
For the singly periodic case, $m_0=1$. 
That is, the central $3$ cells are include in the near region.
For the doubly
periodic case, $m_0=1$ for the rectangular cell; $m_0=2$ for the
parallelogram with $\theta=\pi/3$;
and $m_0=3$ for the
parallelogram with $\theta=\pi/6$, where $\theta$ is the angle between 
$\eh_1$ and $\eh_2$.
The cost of the periodization step is insensitive to the geometry
of the unit cell, since we make use of acceleration with the NUFFT, and a 
small fraction of the total cost.
The FMM for sources in the near region $\N$ is about one to four times 
more expensive than for the unit cell alone. 
In our current implemnetation,
we simply call the free-space FMM with all near region 
sources but with targets
restricted to the unit cell. A more efficient code could be developed by 
taking advantage of the fact that the sources in each image cells are identical,
as are the corresponding hierarchy of multipole moments.
Minor modification of the FMM could reduce the cost to being within a factor of
two of the FMM cost for the unit cell alone. 

\begin{table}[t]
\caption{Timing results of the periodic FMM for the modified Helmholtz kernel 
with $\beta=1$ and $40,000$ sources in the unit cell.
The requested precision is $\epsilon=10^{-12}$.
}
\sisetup{
  tight-spacing=true
}
\centering
\begin{tabular}{l
    S[scientific-notation = fixed,fixed-exponent = 0]
    S[scientific-notation = fixed,fixed-exponent = 0]
    S[scientific-notation = fixed,fixed-exponent = 0]
    S[scientific-notation = fixed,fixed-exponent = 0]
    S[scientific-notation = true,table-format=4.2e-2]}
\toprule
${A}$  & ${t_{\rm per}}$ & ${t_{\rm FMM}}$ & ${t_{\rm total}}$ & ${t_{\rm FMM}^0}$ & {Error}  \\
\midrule
$P_1:$\, \text{rectangle}\\
\midrule
   1    &    0.08  &    1.78    &    1.87  &    1.78    &  0.12e-12 \\
  10    &    0.08  &    1.95    &    2.03  &    1.61    &  0.69e-14 \\
 100    &    0.12  &    2.19    &    2.30  &    1.37    &  0.86e-15 \\
1000    &    0.48  &    2.88    &    3.36  &    1.35    &  0.16e-14 \\
\midrule
$P_2:$\, \text{rectangle}\\
\midrule
   1    &    0.18  &    2.45    &    2.63  &    1.69    &  0.31e-13 \\
  10    &    0.17  &    2.48    &    2.65  &    1.62    &  0.44e-14 \\
 100    &    0.17  &    3.70    &    3.87  &    1.39    &  0.49e-15 \\
1000    &    0.19  &    3.72    &    3.91  &    1.36    &  0.48e-15 \\
\midrule
$P_2:$\, \text{parallelogram with}\, $\theta=\pi/3$\\
\midrule
   2    &    0.18  &    3.40    &    3.59  &    1.82    &  0.20e-12 \\
  10    &    0.17  &    3.45    &    3.63  &    1.78    &  0.35e-12 \\
 100    &    0.17  &    4.24    &    4.42  &    1.39    &  0.12e-12 \\
1000    &    0.19  &    3.84    &    4.03  &    1.36    &  0.45e-12 \\
\midrule
$P_2:$\, \text{parallelogram with}\, $\theta=\pi/6$\\
\midrule
   2    &    0.18  &    4.18    &    4.37  &    1.53    &  0.17e-12 \\
  10    &    0.17  &    3.30    &    3.48  &    1.95    &  0.15e-12 \\
 100    &    0.17  &    4.25    &    4.42  &    1.38    &  0.32e-12 \\
1000    &    0.22  &    3.95    &    4.18  &    1.36    &  0.26e-12 \\
\bottomrule
\end{tabular}
\label{table1}
\end{table}

Similar results hold for the other kernels. In
\cref{table2} we show the timings obtained for the Laplace kernel with
precision $\epsilon=10^{-9}$, and in \cref{table3}, we show the timings 
obtained for the Stokeslet with precision $\epsilon=10^{-6}$. 
The column headings have the same meaning as in \cref{table1}.

\begin{table}[t]
\caption{Timing results of the periodic FMM for the Laplace kernel with
$40,000$ sources in the unit cell and a
requested precision of $\epsilon=10^{-9}$.
}
\sisetup{
  tight-spacing=true
}
\centering
\begin{tabular}{l
    S[scientific-notation = fixed,fixed-exponent = 0]
    S[scientific-notation = fixed,fixed-exponent = 0]
    S[scientific-notation = fixed,fixed-exponent = 0]
    S[scientific-notation = fixed,fixed-exponent = 0]
    S[scientific-notation = true,table-format=4.2e-2]}
\toprule
${A}$  & ${t_{\rm per}}$ & ${t_{\rm FMM}}$ & ${t_{\rm total}}$ & ${t_{\rm FMM}^0}$ & {Error}  \\
\midrule
$P_1:$\, \text{rectangle}\\
\midrule
   1    &    0.06  &    0.56    &    0.62  &    0.64    &  0.10D-09 \\
  10    &    0.07  &    0.83    &    0.90  &    0.66    &  0.98D-11 \\
 100    &    0.08  &    0.64    &    0.73  &    0.44    &  0.21D-12 \\
1000    &    0.23  &    1.02    &    1.25  &    0.40    &  0.15D-12 \\
\midrule
$P_2:$\, \text{rectangle}\\
\midrule
   1    &    0.14  &    1.03    &    1.17  &    0.54    &  0.11D-10 \\
  10    &    0.12  &    0.81    &    0.93  &    0.66    &  0.63D-11 \\
 100    &    0.12  &    1.14    &    1.27  &    0.44    &  0.13D-11 \\
1000    &    0.13  &    1.48    &    1.61  &    0.40    &  0.52D-13 \\
\midrule
$P_2:$\, \text{parallelogram with}\, $\theta=\pi/3$\\
\midrule
   2    &    0.14  &    1.13    &    1.27  &    0.65    &  0.26D-10 \\
  10    &    0.12  &    1.17    &    1.29  &    0.72    &  0.13D-09 \\
 100    &    0.12  &    1.64    &    1.77  &    0.44    &  0.31D-09 \\
1000    &    0.13  &    1.29    &    1.42  &    0.40    &  0.24D-09 \\
\midrule
$P_2:$\, \text{parallelogram with}\, $\theta=\pi/6$\\
\midrule
   2    &    0.13  &    1.76    &    1.90  &    0.46    &  0.20D-10 \\
  10    &    0.13  &    1.11    &    1.24  &    0.62    &  0.23D-09 \\
 100    &    0.12  &    1.66    &    1.78  &    0.45    &  0.60D-10 \\
1000    &    0.13  &    1.28    &    1.42  &    0.40    &  0.34D-09 \\
\bottomrule
\end{tabular}
\label{table2}
\end{table}

\begin{table}[t]
\caption{Timing results of the periodic FMM for the Stokeslet with
$40,000$ sources in the unit cell and a
requested precision of $\epsilon=10^{-6}$.
}
\sisetup{
  tight-spacing=true
}
\centering
\begin{tabular}{l
    S[scientific-notation = fixed,fixed-exponent = 0]
    S[scientific-notation = fixed,fixed-exponent = 0]
    S[scientific-notation = fixed,fixed-exponent = 0]
    S[scientific-notation = fixed,fixed-exponent = 0]
    S[scientific-notation = true,table-format=4.2e-2]}
\toprule
${A}$  & ${t_{\rm per}}$ & ${t_{\rm FMM}}$ & ${t_{\rm total}}$ & ${t_{\rm FMM}^0}$ & {Error}  \\
\midrule
$P_1:$\, \text{rectangle}\\
\midrule
   1    &    0.09  &    1.05    &    1.14  &    0.81    &  0.19D-06 \\
  10    &    0.07  &    1.17    &    1.24  &    0.94    &  0.12D-06 \\
 100    &    0.08  &    1.09    &    1.18  &    0.78    &  0.16D-08 \\
1000    &    0.19  &    1.31    &    1.50  &    0.69    &  0.39D-10 \\
\midrule
$P_2:$\, \text{rectangle}\\
\midrule
   1    &    0.25  &    1.63    &    1.89  &    0.69    &  0.21D-07 \\
  10    &    0.24  &    1.31    &    1.55  &    0.94    &  0.53D-07 \\
 100    &    0.23  &    1.42    &    1.65  &    0.79    &  0.73D-08 \\
1000    &    0.24  &    2.30    &    2.55  &    0.69    &  0.12D-09 \\
\midrule
$P_2:$\, \text{parallelogram with}\, $\theta=\pi/3$\\
\midrule
   2    &    0.25  &    1.50    &    1.75  &    0.78    &  0.35D-07 \\
  10    &    0.24  &    1.55    &    1.80  &    0.88    &  0.19D-06 \\
 100    &    0.24  &    2.30    &    2.55  &    0.80    &  0.57D-06 \\
1000    &    0.25  &    2.31    &    2.56  &    0.69    &  0.39D-06 \\
\midrule
$P_2:$\, \text{parallelogram with}\, $\theta=\pi/6$\\
\midrule
   2    &    0.25  &    2.24    &    2.50  &    0.88    &  0.72D-07 \\
  10    &    0.24  &    1.58    &    1.83  &    0.77    &  0.47D-06 \\
 100    &    0.24  &    2.29    &    2.53  &    0.82    &  0.32D-06 \\
1000    &    0.25  &    2.27    &    2.52  &    0.68    &  0.61D-06 \\
\bottomrule
\end{tabular}
\label{table3}
\end{table}

\section{Conclusions}  \label{sec:conclusions}

Explicit, separable low-rank factorizations have been constructed for the
periodizing operator for particle interactions governed by the modified 
Helmholtz, Poisson,
modified Stokes, and Stokes equations in two dimensions. The factorization is
based on the Sommerfeld integral representation of the Green's function, 
which is readily available
for the modified Helmholtz and Poisson kernels, and can be derived more
generally by Fourier analysis and contour integration, 
as done here for the modified Stokeslet or Stokeslet. In both the singly
and doubly periodic cases, the $\epsilon$-rank $r$ of the periodizing 
operator is shown to be of the order
$O\left(\log(1/\epsilon)\left(\log(1/\beta)+A\log(1/\epsilon)\right)\right)$,
where $A$ is the aspect ratio of the fundamental unit cell. Here, 
$\beta$ is the parameter that defines
the modified Helmholtz and modified Stokes kernels.
For the Poisson and Stokes kernels, the 
factor $\log(1/\beta)$ disappears.

Our factorization leads to a simple fast algorithm for the action of 
the periodizing
operators with $O(r(N_T+N_S))$ complexity - linear with respect to the number
of targets and sources. When $r$ is large,  a more complicated
fast algorithm, relying on the NUFFT, can be used to further speed up
the calculation, reducing the complexity to $O(\log(1/\epsilon)(r\log r+(N_T+N_S)\log(1/\epsilon)))$.

There are several natural extensions or generalizations of the current work.
First, the scheme can easily be extended to treat nonoscillatory kernels in 
three dimensions. Second, 
there is no essential obstacle to extending the scheme to treat
oscillatory problems (such as the Helmholtz or Maxwell equations) 
in two and three dimensions.
The various sums and integrals, however, must be treated with more care,
as they are conditionally convergent, permit ``quasi-periodic" 
boundary conditions and are subject to resonances (Wood anomalies)
\cite{barnett2010jcp,barnett2011bit,denlinger2017jmp,dienstfrey2001prsl,enoch2001,mcphedran2000jmp}.
Third, the scheme can be coupled with integral equation methods
and the fast multipole method to solve periodic boundary value problems
when the unit cell contains inclusions of complicated shape.
Finally, more efficient versions of the FMM can be deployed to 
reduce the cost of handling the near region copies of the unit cell, 
as in the periodic version of the original scheme~\cite{greengard1987jcp}.
This would bring into closer alignment the time 
${t_{\rm FMM}}$ and  ${t_{\rm FMM}^0}$ in \cref{table1,table2,table3}.
For multiple scattering
problems with singly or doubly periodic boundary conditions, where
the far field of a scatterer is represented by a multipole expansion,
the periodic scattering matrix can be constructed via simple 
modifications of the algorithms in \cite{gan2016sisc,fmps2013jcp}.
This requires periodizing operators for multipole sources, which are presented
in the appendices of the present paper.
\section*{Acknowledgments}
The authors would like to thank Jingfang Huang at the University of
North Carolina at Chapel Hill, Alex Barnett and Manas Rachh at the
Flatiron Institute for helpful discussions.

\appendix
\section{Rotated plane-wave expansions for the east and west parts
  of the doubly periodic periodizing operators}

In the analysis and implementation of the present paper,
we have relied on plane-wave expansions 
that decay in $x$: either for $x>0$ 
(the west part) or for $x<0$ (the east part). 
Simple geometric considerations led to the conclusion that
we may need to exclude the central $7\times 3$ copies of the unit 
cell. For non-rectangualr unit cells, it is actually more efficient to
align the decay direction in the plane-wave expansion with
$\eh_2^\perp$ - that is, orthogonal to the $\eh_2$ direction.
We illustrate the corresponding algorithm in the case of the modified
Helmholtz kernel. Consider the
coordinate transformation
\be
\begin{pmatrix} \tx\\ \ty\end{pmatrix} =
\begin{pmatrix} \cos \theta& -\sin \theta \\ \sin \theta & \cos \theta 
\end{pmatrix} 
\begin{pmatrix} x\\y\end{pmatrix}.
\ee
In complex notation, this is equivalent to
\be
\tx+i \ty=e^{i\theta} (x+iy).
\ee
Let us also write
\be
\txi+\teta=e^{i\theta}(\xi+i\eta), \quad \td_x+i\td_y=d e^{i\theta}.
\ee
For the west part, if the plane-wave expansion along the $\tx$ direction 
is used, we have
\be\label{eq:k0west2}
\ba
K_2^{west}(\bt,\bs)&= \frac{1}{2\pi} \sum_{m=-\infty}^{-(m_0+1)}\sum_{n=-1}^1
K_0\left(\bt,\bs+\bl_{mn}\right)\\
&=\sum_{m=-\infty}^{-(m_0+1)}\sum_{n=-1}^1
 \int_{-\infty}^\infty
\frac{e^{-\sqrt{\lambda^2+\beta^2}(\tx-\tx'-m\td_x-n\txi)}}{4 \pi \sqrt{\lambda^2+\beta^2}}
\cdot e^{i\lambda (\ty-\ty'-m \td_y-n\teta)} d\lambda\\
&= \sum_{n=-1}^1\int_{-\infty}^\infty
\frac{e^{-\sqrt{\lambda^2+\beta^2}(\tx-\tx'-n\txi)}}{4 \pi \sqrt{\lambda^2+\beta^2}}
\cdot e^{i\lambda (\ty-\ty'-n\teta)} 
\frac{e^{-(m_0+1)\left(\sqrt{\lambda^2+\beta^2}\td_x-i\lambda \td_y\right)}}
     {1-e^{-\left(\sqrt{\lambda^2+\beta^2}\td_x-i\lambda \td_y\right)}}
     d\lambda.
\ea
\ee
\begin{figure}
  \centering
  \includegraphics[width=0.8\textwidth]{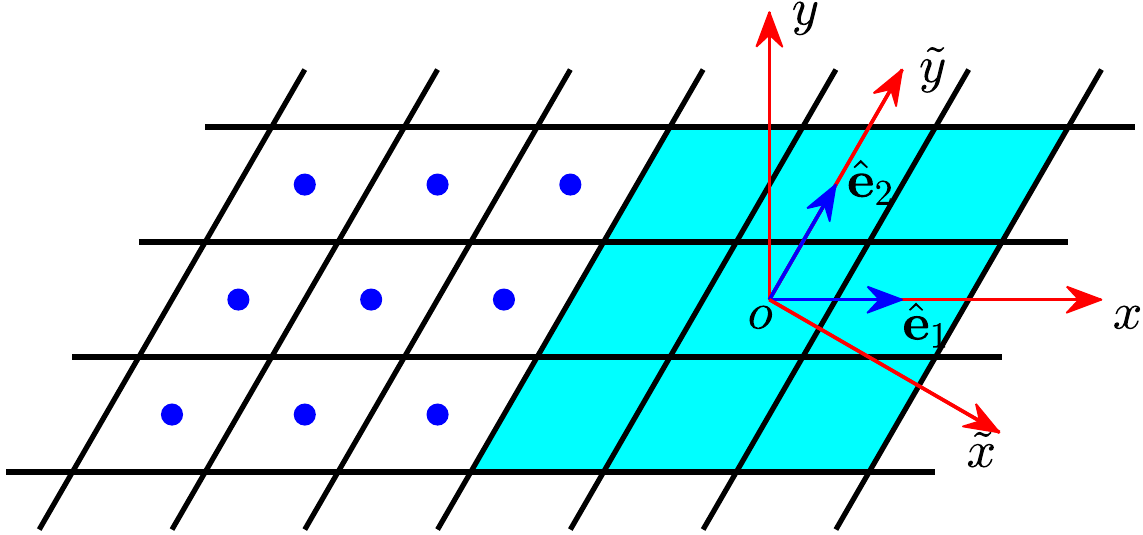}
  \caption{\label{fig:newplanewavedirection}
    New direction of the plane-wave expansion for the west part. In the main text,
    we have chosen the plane-wave expansions along the coordinate axes for all four
    parts. The advantage is that the east and west parts of the doubly periodic periodizing
    operators can be discretized via efficient precomputed generalized Gaussian quadrature. But the
    worst case requires the exclusion of the center $7\times 3$ cells from the periodizing
    operators. If we choose the plane-wave expansion along the $\tilde{x}$-axis, then
    one only needs to exclude the center $3\times 3$ cells from the doubly periodic
    periodizing operators. But the number of plane waves may increase if the angle between
    $\eh_1$ and $\eh_2$ is very small and $|\eh_2|$ is very close to $|\eh_1|$.
  }
\end{figure}

It is now clear that if we choose ${\bf e}_{\tx}=\eh_2^\perp$ - that is,
we choose
$\theta$ such that $\txi=0$ and $\td_x>0$, then $m_0=1$ is sufficient to ensure
that the decaying exponential in the integrand decays at least as fast
as $e^{-\sqrt{\lambda^2+\beta^2}\td_x}$. Thus, 
one only needs to exclude the center $3\times 3$ cells from the periodizing 
operator rather than the larger near region we have used above. 
The integrand could still
be highly oscillatory, so that an effective high-order quadrature
is needed, just as in singly periodic case.
\section{Periodizing operators for the modified Helmholtz equation
with multipole sources}
\label{sec:yukawamultipole}
The multipole of order $l$ for
the modified Helmholtz multipole is defined by $K_l(\beta r)e^{il\theta}$,
where $K_l$ the modified Bessel function of the second kind of order $l$.
The following lemma describes the corresponding plane-wave expansions for the
far-field contributions of the periodizing operators.

\begin{lemma}
For the standard unit cell $\C$ discussed in the main text,
let $K_2^{south}$, $K_2^{north}$, $K_2^{west}$, $K_2^{east}$ denote
the far-field parts of the periodizing operator
for a multipole source of order $l$ governed by the modified Helmholtz equation
subject to doubly periodic boundary conditions.
That is,
\be
\ba
K_2^{south}(\bt,\bs) &=
\sum_{n=-\infty}^{-2}\sum_{m=-\infty}^\infty
K_l(\bt,\bs+\bl_{mn})e^{il\theta_{mn}}, \\
K_2^{north}(\bt,\bs) &=
\sum_{n=2}^\infty \sum_{m=-\infty}^\infty
K_l(\bt,\bs+\bl_{mn})e^{il\theta_{mn}}, \\
K_2^{west}(\bt,\bs) &=
\sum_{n=-1}^{1} \sum_{m=-\infty}^{-4}
K_l(\bt,\bs+\bl_{mn})e^{il\theta_{mn}}, \\
K_2^{east}(\bt,\bs) &=
\sum_{n=-1}^{1} \sum_{m=4}^\infty
K_l(\bt,\bs+\bl_{mn})e^{il\theta_{mn}}.
\ea
\ee
Let $\alpha_m$, $\chi_m$ and $Q_m$ be given by \cref{chialphadef}.
Then
\be
\ba
K_2^{south}(\bt,\bs)
&=\frac{\pi i^l}{d}
\sum_{m=-\infty}^\infty
\left(\frac{\beta}{\chi_m+\alpha_m}\right)^l\frac{1}{\chi_m} e^{-\chi_m(y-y')+i\alpha_m(x-x')}
\frac{e^{-2Q_m}}{1-e^{-Q_m}}\, ,\\
K_2^{north}(\bt,\bs)
&=\frac{\pi (-i)^l}{d}
\sum_{m=-\infty}^\infty
\left(\frac{\chi_m+\alpha_m}{\beta}\right)^l\frac{1}{\chi_m} e^{\chi_m(y-y')+i\alpha_m(x-x')}
\frac{e^{-2\overline{Q_m}}}{1-e^{-\overline{Q_m}}}\, ,\\
K_2^{west}(\bt,\bs)
&=\frac{1}{2 \beta^l} \sum_{n=-1}^1\int_{-\infty}^\infty
\left(\sqrt{\lambda^2+\beta^2}+\lambda\right)^l
\frac{e^{-\sqrt{\lambda^2+\beta^2}(x-x'-n\xi)}}{\sqrt{\lambda^2+\beta^2}}\\
& \cdot e^{i\lambda (y-y'-n\eta)}
\frac{e^{-4\sqrt{\lambda^2+\beta^2}d}}
     {1-e^{-\sqrt{\lambda^2+\beta^2}d}}
     d\lambda\, ,\\
K_2^{east}(\bt,\bs)
&=\frac{(-1)^l}{2 \beta^l} \sum_{n=-1}^1\int_{-\infty}^\infty
\left(\sqrt{\lambda^2+\beta^2}-\lambda\right)^l
\frac{e^{\sqrt{\lambda^2+\beta^2}(x-x'-n\xi)}}{\sqrt{\lambda^2+\beta^2}}\\
& \cdot e^{i\lambda (y-y'-n\eta)}
\frac{e^{-4\sqrt{\lambda^2+\beta^2}d}}
     {1-e^{-\sqrt{\lambda^2+\beta^2}d}}
     d\lambda\, .
\ea
\ee
Similarly, for the singly periodic case,
\be
\ba
K_1^{west}(\bt,\bs)&=\sum_{m=-\infty}^{-2}K_l(\bt,\bs+(md,0))e^{il\theta_{m0}}, \\
&=\frac{1}{2 \beta^l} \int_{-\infty}^\infty
\left(\sqrt{\lambda^2+\beta^2}+\lambda\right)^l
\frac{e^{-\sqrt{\lambda^2+\beta^2}(x-x')}}{\sqrt{\lambda^2+\beta^2}}\\
& \cdot e^{i\lambda (y-y')}
\frac{e^{-2\sqrt{\lambda^2+\beta^2}d}}
     {1-e^{-\sqrt{\lambda^2+\beta^2}d}}
     d\lambda\, ,\\
K_1^{east}(\bt,\bs)&=\sum_{m=2}^\infty
K_l(\bt,\bs+(md,0))e^{il\theta_{m0}}\\
&=\frac{(-1)^l}{2 \beta^l}\int_{-\infty}^\infty
\left(\sqrt{\lambda^2+\beta^2}-\lambda\right)^l
\frac{e^{\sqrt{\lambda^2+\beta^2}(x-x')}}{\sqrt{\lambda^2+\beta^2}}\\
& \cdot e^{i\lambda (y-y')}
\frac{e^{-2\sqrt{\lambda^2+\beta^2}d}}
     {1-e^{-\sqrt{\lambda^2+\beta^2}d}}
     d\lambda\, .
\ea
\ee

\end{lemma}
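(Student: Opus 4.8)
The plan is to mirror, essentially verbatim, the proofs of \cref{south_thm} and of the integral representations \eqref{pylw0}--\eqref{pylw02}, feeding in the general order-$l$ Sommerfeld representation \eqref{k0planewave} in place of its $l=0$ special case. The only genuinely new feature is the extra prefactor $(\sqrt{\lambda^2+\beta^2}\mp\lambda)^l$ carried by \eqref{k0planewave} for $l\ge1$, which must be tracked through the manipulations and shown not to spoil absolute convergence. Which of the four branches of \eqref{k0planewave} to use in each region is dictated, exactly as in the scalar case, by which coordinate of the relative vector $\bt-(\bs+\bl_{mn})$ the near/far split keeps bounded away from zero: the $y>0$ branch for the south sum (there $y-y'-n\eta\ge 3\eta/2$), the $y<0$ branch for the north sum, the $x>0$ branch for the west sums, and the $x<0$ branch for the east sums.

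For $K_2^{south}$ I would substitute the $y>0$ branch of \eqref{k0planewave}, with $(x,y)$ replaced by $(x-x'-md-n\xi,\,y-y'-n\eta)$, into the double sum. The only $m$-dependence inside the integral is the phase $e^{-i\lambda md}$, so Poisson summation in the form \eqref{deltarep} replaces $\sum_m e^{-i\lambda md}$ by $\tfrac{2\pi}{d}\sum_m\delta(\lambda-\alpha_m)$ and collapses the $\lambda$-integral onto the discrete frequencies $\alpha_m=2\pi m/d$, producing an overall factor $\pi/d$ (the $\tfrac12$ here comes from the $\tfrac{i^l}{2\beta^l}$ in \eqref{k0planewave} rather than from a Green's function normalization, which is why the lemma carries $\pi/d$ rather than the $1/(2d)$ of \cref{south_thm}). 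At $\lambda=\alpha_m$ one has $\sqrt{\lambda^2+\beta^2}=\chi_m$, and the elementary identity
\[
\sqrt{\lambda^2+\beta^2}-\lambda=\frac{\beta^2}{\sqrt{\lambda^2+\beta^2}+\lambda}
\]
converts $\tfrac{i^l}{2\beta^l}(\chi_m-\alpha_m)^l$ into $\tfrac{i^l}{2}\bigl(\beta/(\chi_m+\alpha_m)\bigr)^l$. Summing the geometric series $\sum_{n\le-2}e^{-nQ_m}$ with $Q_m=\chi_m\eta-i\alpha_m\xi$, exactly as in \cref{south_thm} and using that $Q_m$ has positive real part $\chi_m\eta>0$, yields $e^{-2Q_m}/(1-e^{-Q_m})$ and hence the stated formula. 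The $K_2^{north}$ case is identical with the $y<0$ branch of \eqref{k0planewave}: the same identity turns the prefactor into $\tfrac{(-i)^l}{2}\bigl((\chi_m+\alpha_m)/\beta\bigr)^l$, and the geometric series over $n\ge2$ produces $e^{-2\overline{Q_m}}/(1-e^{-\overline{Q_m}})$.

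For $K_2^{west}$ and $K_2^{east}$ there is no Poisson summation, since the translates $\bl_{m,n}$ with $|n|\le1$ only shift the decaying coordinate: I would substitute the $x>0$ (resp.\ $x<0$) branch of \eqref{k0planewave}, interchange the $m$-sum with the integral, and sum the geometric series $\sum_{m\le-4}e^{\sqrt{\lambda^2+\beta^2}md}=e^{-4\sqrt{\lambda^2+\beta^2}d}/(1-e^{-\sqrt{\lambda^2+\beta^2}d})$ (resp.\ $\sum_{m\ge4}$), which converges since $0<e^{-\sqrt{\lambda^2+\beta^2}d}<1$ for real $\lambda$; this leaves the single $\lambda$-integral together with the finite sum over $n\in\{-1,0,1\}$. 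For the singly periodic kernels $K_1^{west}$, $K_1^{east}$ one has $n\equiv0$ and $m_0=1$, so the geometric series runs over $m\le-2$ (resp.\ $m\ge2$) and yields $e^{-2\sqrt{\lambda^2+\beta^2}d}/(1-e^{-\sqrt{\lambda^2+\beta^2}d})$; the rest of the computation is unchanged.

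The step I expect to require the most care is the bookkeeping that legitimizes these formal rearrangements, i.e.\ that Fubini and the distributional Poisson summation apply and that the resulting series and integrals converge absolutely. For the east/west representations this is immediate: since $\beta>0$ the integrand is $C^\infty$ in $\lambda$, and the near/far split forces $x-x'-md-n\xi$ to be bounded below by a positive constant (precisely the geometric estimate behind \eqref{pylw02}, using $d\ge\sqrt{\xi^2+\eta^2}$ and $m_0=3$ in the doubly periodic case), so the decaying exponential dominates the polynomial factor $(\sqrt{\lambda^2+\beta^2}\pm\lambda)^l\le(2\sqrt{\lambda^2+\beta^2})^l$. For the south/north sums the subtlety is that $\bigl(\beta/(\chi_m+\alpha_m)\bigr)^l$ grows like $|m|^l$ as $m\to-\infty$ (since $\chi_m+\alpha_m\to0^+$ there), and $e^{-\chi_m(y-y')}$ can grow like $e^{\chi_m\eta}$ when $y-y'<0$; but the leading term of the $n$-geometric series supplies a factor of modulus $e^{-2\chi_m\eta}$ with $\chi_m\ge2\pi|m|/d$, which dominates both effects and leaves a net super-exponential decay in $|m|$ (the analogous estimate, with $\bigl((\chi_m+\alpha_m)/\beta\bigr)^l$ growing as $m\to+\infty$, handles the north sum). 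Consequently the pre-Poisson integrand is Schwartz in $\lambda$, every interchange above is justified, and the infinite series and integrals converge absolutely, which establishes the six identities.
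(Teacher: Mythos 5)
Your proposal is correct and follows essentially the same route the paper takes (and that it uses explicitly for $l=0$ in \cref{south_thm} and \cref{pylw0,pylw02}): substitute the order-$l$ Sommerfeld representation \cref{k0planewave}, apply Poisson summation in $m$ for the south/north parts and sum the geometric series in $n$, and sum the geometric series in $m$ under the integral for the east/west parts, with the identity $\sqrt{\lambda^2+\beta^2}-\lambda=\beta^2/(\sqrt{\lambda^2+\beta^2}+\lambda)$ accounting for the $(\beta/(\chi_m+\alpha_m))^l$ and $((\chi_m+\alpha_m)/\beta)^l$ factors and the absence of the $1/(2\pi)$ normalization accounting for $\pi/d$ in place of $1/(2d)$. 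Two harmless slips: the uniform lower bound for the south case is $y-y'-n\eta\geq \eta$ (not $3\eta/2$, since $|y-y'|$ can reach $\eta$), and the net decay in $|m|$ is exponential rather than super-exponential; neither affects the argument, which only needs positivity and absolute convergence.
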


The preceding result yields the following low-rank decompositions
for the periodizing operators.

\begin{lemma}
Under the hypotheses of \cref{south_thm} and \cref{west_thm}, let
\[ \bL^{south}, \bL^{north} \in \mathbb{C}^{N_T \times (2M+1)} \]
and 
$\bR^{south}, \bR^{north} \in \mathbb{C}^{(2M+1) \times N_S}$ be dense matrices
defined in \cref{lowranksouth3,lowranknorth}, and let
$\bL_1^{west} \in \mathbb{C}^{N_T \times 2N^1_q}$,
$\bL_2^{west} \in \mathbb{C}^{N_T \times 2N^2_q}$,
$\bR_1^{west} \in \mathbb{C}^{2N^1_q \times N_S}$,
$\bR_2^{west} \in \mathbb{C}^{2N^2_q \times N_S}$
be dense matrices defined in \cref{lowrankwest}. Furthermore,
let $\bD^{south}, \bD^{north} \in \mathbb{C}^{(2M+1) \times (2M+1)}$ be diagonal
matrices with
\be\label{lowranksouth_multipole}
\ba
\bD^{south}(m,m) &=
\frac{\pi i^l}{d}
\left(\frac{\beta}{\chi_m+\alpha_m}\right)^l\frac{1}{\chi_m}
\frac{e^{-2Q_m}}{1-e^{-Q_m}}\, ,\\
\bD^{north}(m,m)
&=\frac{\pi (-i)^l}{d}
\left(\frac{\chi_m+\alpha_m}{\beta}\right)^l\frac{1}{\chi_m}
\frac{e^{-2\overline{Q_m}}}{1-e^{-\overline{Q_m}}}\, ,\\
\ea
\ee
and let $\bD_1^{west}, \bD_1^{east}$, and $\bD_2^{west}$, $\bD_2^{east}$
be diagonal matrices of dimension $2N^1_q$ and $2N^2_q$, respectively,
with
\be\label{lowrankwest_multipole}
\ba
\bD_1^{west}(n,n)
&=\frac{1}{2 \beta^l}
\left(\sqrt{\lambda_{n,1}^2+\beta^2}+\lambda_{n,1}\right)^l
\frac{w_{n,1}}{\sqrt{\lambda_{n,1}^2+\beta^2}}
\frac{e^{-2\sqrt{\lambda_{n,1}^2+\beta^2}d}}
     {1-e^{-\sqrt{\lambda_{n,1}^2+\beta^2}d}}\, ,\\
\bD_1^{east}(n,n)&=\frac{(-1)^l}{2 \beta^l}
\left(\sqrt{\lambda_{n,1}^2+\beta^2}-\lambda_{n,1}\right)^l
\frac{w_{n,1}}{\sqrt{\lambda_{n,1}^2+\beta^2}}
\frac{e^{-2\sqrt{\lambda_{n,1}^2+\beta^2}d}}
     {1-e^{-\sqrt{\lambda_{n,1}^2+\beta^2}d}}\, ,\\
\bD_2^{west}(n,n)
&=\frac{1}{2 \beta^l}
\left(\sqrt{\lambda_{n,2}^2+\beta^2}+\lambda_{n,2}\right)^l
\frac{w_{n,2}}{\sqrt{\lambda_{n,2}^2+\beta^2}}
\frac{e^{-4\sqrt{\lambda_{n,2}^2+\beta^2}d}}
     {1-e^{-\sqrt{\lambda_{n,2}^2+\beta^2}d}}\\
&\cdot [ e^{-\sqrt{\lambda_{n,2}^2+\beta^2} \xi + i \lambda_{n,2} \eta} +
  e^{\sqrt{\lambda_{n,2}^2+\beta^2} \xi - i \lambda_{n,2} \eta} + 1]\, ,\\
\bD_2^{east}(n,n)&=\frac{(-1)^l}{2 \beta^l}
\left(\sqrt{\lambda_{n,2}^2+\beta^2}-\lambda_{n,2}\right)^l
\frac{w_{n,2}}{\sqrt{\lambda_{n,2}^2+\beta^2}}
\frac{e^{-4\sqrt{\lambda_{n,2}^2+\beta^2}d}}
     {1-e^{-\sqrt{\lambda_{n,2}^2+\beta^2}d}}\\
&\cdot [ e^{-\sqrt{\lambda_{n,2}^2+\beta^2} \xi - i \lambda_{n,2} \eta} +
  e^{\sqrt{\lambda_{n,2}^2+\beta^2} \xi + i \lambda_{n,2} \eta} + 1]\, ,
\ea
\ee
where $\lambda_{-n,1}=-\lambda_{n,1}$ for $n=1, \ldots,  N_q^1$,
and $\lambda_{-n,2}=-\lambda_{n,2}$ for $n=1, \ldots, N_q^2$.
Then, the periodizing operators for the modified
Helmholtz multipole of order $l$ are given by
\be
\ba
\P_2^{south} &= \bL^{south} \, \bD^{south} \, \bR^{south} + O(\epsilon),\\
\P_2^{north} &= \bL^{north} \, \bD^{north} \, \bR^{north} + O(\epsilon),\\
\P_2^{west} &= \bL_2^{west} \, \bD_2^{west} \, \bR_2^{west} + O(\epsilon),\\
\P_2^{east} &= \bL_2^{east} \, \bD_2^{east} \, \bR_2^{east} + O(\epsilon),\\
\P_1^{west} &= \bL_1^{west} \, \bD_1^{west} \, \bR_1^{west} + O(\epsilon),\\
\P_1^{east} &= \bL_1^{east} \, \bD_1^{east} \, \bR_1^{east} + O(\epsilon).
\ea
\ee
\end{lemma}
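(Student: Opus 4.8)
The plan is to combine the plane-wave expansions of the preceding lemma with exactly the truncation and quadrature arguments already used to prove \cref{south_thm} and \cref{west_thm}. Consider first the ``south'' operator. Starting from
\[
K_2^{south}(\bt,\bs)=\frac{\pi i^l}{d}\sum_{m=-\infty}^\infty\left(\frac{\beta}{\chi_m+\alpha_m}\right)^l\frac{1}{\chi_m}\,e^{-\chi_m(y-y')+i\alpha_m(x-x')}\,\frac{e^{-2Q_m}}{1-e^{-Q_m}},
\]
I would rewrite the $m$-th summand as the product of the target factor $e^{-\chi_m y}e^{i\alpha_m x}$, the source factor $e^{\chi_m y'}e^{-i\alpha_m x'}$, and the purely $m$-dependent scalar $\tfrac{\pi i^l}{d}\bigl(\tfrac{\beta}{\chi_m+\alpha_m}\bigr)^l\tfrac{1}{\chi_m}\tfrac{e^{-2Q_m}}{1-e^{-Q_m}}$. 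The first two factors are precisely the rows of $\bL^{south}$ and the columns of $\bR^{south}$ of \cref{lowranksouth3} (they are unchanged when the source is a multipole), and the scalar is exactly $\bD^{south}(m,m)$ in \cref{lowranksouth_multipole}; truncating at $|m|=M$ with $M$ as in \cref{mpk0s} gives $\P_2^{south}=\bL^{south}\bD^{south}\bR^{south}+O(\epsilon)$. The ``north'' expression follows identically from the corresponding expansion, the only difference being that the multipole prefactor becomes $\bigl(\tfrac{\chi_m+\alpha_m}{\beta}\bigr)^l$ and the exponentials involve $\overline{Q_m}$; $\bL^{north},\bR^{north}$ are again as in the charge case.

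For the ``east'' and ``west'' operators (singly and doubly periodic) I would start from the integral representations of the preceding lemma, for instance
\[
K_2^{west}(\bt,\bs)=\frac{1}{2\beta^l}\sum_{n=-1}^1\int_{-\infty}^\infty\left(\sqrt{\lambda^2+\beta^2}+\lambda\right)^l\frac{e^{-\sqrt{\lambda^2+\beta^2}(x-x'-n\xi)}}{\sqrt{\lambda^2+\beta^2}}\,e^{i\lambda(y-y'-n\eta)}\,\frac{e^{-4\sqrt{\lambda^2+\beta^2}d}}{1-e^{-\sqrt{\lambda^2+\beta^2}d}}\,d\lambda.
\]
Now the integrand is no longer even in $\lambda$ (because of the prefactor $(\sqrt{\lambda^2+\beta^2}+\lambda)^l$), so instead of the real-part reduction used in \cref{tk0west} one discretizes the full real line directly, using the symmetrized node set $\{\pm\lambda_{n,k}\}_{n=1}^{N_q^k}$ — this is why $\bL_k^{west},\bR_k^{west}$ now have $2N_q^k$ columns/rows. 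Splitting $e^{-\sqrt{\lambda^2+\beta^2}(x-x'-n\xi)}e^{i\lambda(y-y'-n\eta)}$ into a target factor $e^{-\sqrt{\lambda^2+\beta^2}x}e^{i\lambda y}$, a source factor $e^{\sqrt{\lambda^2+\beta^2}x'}e^{-i\lambda y'}$, and the residual $n$-dependent exponential, and then absorbing the multipole prefactor, the decaying geometric-series factor, the quadrature weight, and the sum over $n\in\{-1,0,1\}$ (which produces exactly the bracket $[e^{-\sqrt{\lambda^2+\beta^2}\xi+i\lambda\eta}+e^{\sqrt{\lambda^2+\beta^2}\xi-i\lambda\eta}+1]$) into a single diagonal entry, one recovers $\bD_2^{west}(n,n)$ of \cref{lowrankwest_multipole}. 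The ``east'' and the singly periodic cases follow by the same rearrangement, with the direction-appropriate prefactor $(\sqrt{\lambda^2+\beta^2}\mp\lambda)^l$ and the obvious sign changes.

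It then remains to check the two $O(\epsilon)$ bounds, and this is where I expect the only real (though quite mild) difficulty. For south/north the estimate is the geometric tail bound of \cref{esbound1}; the new factor $\bigl(\tfrac{\beta}{\chi_m+\alpha_m}\bigr)^l=\bigl(\tfrac{\chi_m-\alpha_m}{\beta}\bigr)^l$ (respectively its reciprocal for north) grows at most polynomially in $|m|$ on one side of the sum and decays on the other, so $\sum_{|m|>M}\mathrm{poly}(|m|)\,e^{-2\pi|m|\eta/d}$ is still $O(\epsilon)$ for $M$ as in \cref{mpk0s} with $l$ treated as fixed (enlarging $M$ by a constant multiple absorbs the polynomial if desired). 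For east/west the integrand of the preceding lemma decays at least like $e^{-\sqrt{\lambda^2+\beta^2}d}$ in $\lambda$ — by the $m_0=3$, $d\ge\sqrt{\xi^2+\eta^2}$ geometry already established there — and, after multiplication by the smooth, polynomially growing $(\sqrt{\lambda^2+\beta^2}+\lambda)^l$, remains a smooth, rapidly decaying function of $\lambda$, so the generalized Gaussian / Gauss--Legendre quadratures of \cref{sec:ggq} integrate it to precision $\epsilon$ with $N_q^1$ or $N_q^2$ nodes exactly as in \cref{west_thm}. The hard part, then, is just verifying that the multipole prefactors spoil neither the geometric tail of the $m$-sum nor the decay of the $\lambda$-integrand, so that the same $M$ and the same precomputed quadratures still deliver accuracy $\epsilon$; the factorizations themselves are a direct regrouping of the summands and integrands into $\bL\,\bD\,\bR$ form.
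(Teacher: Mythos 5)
Your proposal is correct and follows exactly the route the paper intends (and leaves implicit): regroup the plane-wave expansions of the preceding lemma into target--diagonal--source form, reuse the geometric tail bound of \cref{south_thm} and the quadrature argument of \cref{west_thm}, and note that the loss of $\lambda\to-\lambda$ symmetry forces the symmetrized node set, which is precisely why the east/west factors have dimension $2N_q^1$, $2N_q^2$. Your remark that the polynomially growing multipole prefactors $\left(\frac{\chi_m-\alpha_m}{\beta}\right)^l$ and $\left(\sqrt{\lambda^2+\beta^2}\pm\lambda\right)^l$ may require a mild ($l$-dependent) enlargement of $M$ or of the quadrature is a fair and accurate refinement of the stated error claim.
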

\section{Periodizing operators for the Laplace equation with 
multipole sources}
\label{sec:laplacemultipole}
In two dimensions, using complex variables notation, the Laplace 
multipole of order $l$ is simply $1/z^l$.
Here we identify $\bt$ with $z=x+iy$, $\bs$ with $z'=x'+iy'$,
$\eh_1$ with $e_1=d$, $\eh_2$ with $e_2=\xi+i\eta$,
and $\bl_{mn}$ with $z_{mn}=m\cdot e_1 +n \cdot e_2$.
The following lemma contains the plane-wave expansions for the
far-field parts of the corresponding periodic kernels.
\begin{lemma}
For the standard unit cell $\C$ discussed in the main text,
let $K_2^{south}$, $K_2^{north}$, $K_2^{west}$, $K_2^{east}$ denote
the far-field parts of the periodizing operator
for a multipole source of order $l$ governed by the Laplace equation
subject to doubly periodic boundary conditions.
That is,
\be
\ba
K_2^{south}(\bt,\bs) &=
\sum_{n=-\infty}^{-2}\sum_{m=-\infty}^\infty
\frac{1}{(z-z'-z_{mn})^l}, \\
K_2^{north}(\bt,\bs) &=
\sum_{n=2}^\infty \sum_{m=-\infty}^\infty
\frac{1}{(z-z'-z_{mn})^l}, \\
K_2^{west}(\bt,\bs) &=
\sum_{n=-1}^{1} \sum_{m=-\infty}^{-4}
\frac{1}{(z-z'-z_{mn})^l}, \\
K_2^{east}(\bt,\bs) &=
\sum_{n=-1}^{1} \sum_{m=4}^\infty
\frac{1}{(z-z'-z_{mn})^l}.
\ea
\ee
Let $Q_m=2\pi m(\eta-i\xi)/d$.
Then
\be
\ba
K_2^{south}(\bt,\bs)
&=\frac{(-2\pi i)^l}{(l-1)!d^l}\,\sum_{m=1}^{\infty}
m^{l-1} e^{i\frac{2\pi m}{d} (z-z')}\frac{e^{-2Q_m}}{1-e^{-Q_m}} + \delta_{l1}\frac{\pi i}{d\eta}y,\\
K_2^{north}(\bt,\bs)
&=\frac{(2\pi i)^l}{(l-1)!d^l}\,\sum_{m=1}^{\infty}
m^{l-1} e^{-i\frac{2\pi m}{d} (z-z')}\frac{e^{-2\overline{Q_m}}}{1-e^{-\overline{Q_m}}}
+ \delta_{l1}\frac{\pi i}{d\eta}y,\\
K_2^{west}(\bt,\bs)
&=\frac{1}{(l-1)!}\,
\int_0^\infty \lambda^{l-1} \left(1+e^{\lambda\cdot e_2}+e^{-\lambda\cdot e_2}\right)
e^{-\lambda (z-z')}
\frac{e^{-4\lambda d}}{1-e^{-\lambda d}}d\lambda,\\
K_2^{east}(\bt,\bs)
&=\frac{(-1)^l}{(l-1)!}\,
\int_0^\infty \lambda^{l-1} \left(1+e^{\lambda\cdot e_2}+e^{-\lambda\cdot e_2}\right)
e^{\lambda (z-z')}
\frac{e^{-4\lambda d}}{1-e^{-\lambda d}}d\lambda.
\ea\label{lowranksouth_lapmultipole}
\ee
Similarly, for the singly periodic case,
\be
\ba
K_1^{west}(\bt,\bs)&=\sum_{m=-\infty}^{-2}
\frac{1}{(z-z'-z_{mn})^l} \\
&=\frac{1}{(l-1)!}\,
\int_0^\infty \lambda^{l-1} 
e^{-\lambda (z-z')}
\frac{e^{-2\lambda d}}{1-e^{-\lambda d}}d\lambda,\\
K_1^{east}(\bt,\bs)&=\sum_{m=2}^\infty
\frac{1}{(z-z'-z_{mn})^l} \\
&=\frac{(-1)^l}{(l-1)!}\,
\int_0^\infty \lambda^{l-1} 
e^{\lambda (z-z')}
\frac{e^{-2\lambda d}}{1-e^{-\lambda d}}d\lambda.
\ea\label{lowrankwest_lapmultipole}
\ee
\end{lemma}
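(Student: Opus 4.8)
The plan is to substitute the Sommerfeld plane-wave representation \eqref{laplacepw} of $1/z^l$ into each of the ten lattice sums, choosing for every block of image cells the branch of \eqref{laplacepw} whose integrand decays exponentially across that block: the $y<0$ branch (integrand $\lambda^{l-1}e^{-i\lambda z}$) for the north cells, the $y>0$ branch for the south cells, the $x>0$ branch for the west cells, and the $x<0$ branch for the east cells. The strict inequalities $\operatorname{Im}(z-z'-z_{mn})<0$, etc., that validate these choices hold across each block precisely because the near neighbors are excluded; for a general parallelogram with $d\ge\sqrt{\xi^2+\eta^2}$ this is exactly where $m_0=3$ for east/west and the ranges $n\le-2$, $n\ge2$ for south/north enter, just as in the proofs of \cref{south_thm} and \cref{west_thm}. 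With the representation in place one then carries out the remaining lattice summations term by term.

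For the west and east kernels both surviving summations are elementary: the sum over the half-line $m\le-(m_0+1)$ (resp.\ $m\ge m_0+1$) is a geometric series equal to $e^{-(m_0+1)\lambda d}/(1-e^{-\lambda d})$ — i.e.\ $e^{-2\lambda d}/(1-e^{-\lambda d})$ in the singly periodic case and $e^{-4\lambda d}/(1-e^{-\lambda d})$ in the doubly periodic case — and the sum over $n\in\{-1,0,1\}$ contributes $1+e^{\lambda e_2}+e^{-\lambda e_2}$; the interchange of summation and integration is justified by the uniform exponential decay of the integrand over the relevant range of $z-z'$. For the south and north kernels the inner summation runs over an entire line of lattice points, so in place of a geometric series one invokes the Poisson summation formula \eqref{deltarep}, which turns $\sum_{m\in\mathbb Z}e^{\mp i\lambda md}$ into the Dirac comb $\tfrac{2\pi}{d}\sum_{k\in\mathbb Z}\delta(\lambda-2\pi k/d)$. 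Because \eqref{laplacepw} integrates $\lambda$ only over $(0,\infty)$, only the modes $k\ge1$ survive (the endpoint $k=0$ is treated below); collapsing the $\lambda$-integral onto these modes and summing the now absolutely convergent geometric series in $n$ over $n\le-2$ (resp.\ $n\ge2$) produces the factor $e^{-2Q_m}/(1-e^{-Q_m})$ (resp.\ its complex conjugate), while $\lambda^{l-1}=(2\pi k/d)^{l-1}$ combines with $(\mp i)^l/(l-1)!$ and the $2\pi/d$ from \eqref{deltarep} to give the prefactor $(\mp2\pi i)^l/\big((l-1)!\,d^l\big)$. This delivers all of the stated formulas for $l\ge2$ directly.

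The delicate point — and the main obstacle — is the $l=1$ term, where the lattice sum over a whole line of $1/z$-multipoles is only conditionally convergent. In the scheme above this manifests as the endpoint mode $k=0$: for $l\ge2$ the weight $\lambda^{l-1}$ annihilates it, but for $l=1$ the formal $k=0$ contribution involves $\lim_{\lambda\to0^+}e^{-2\lambda(\eta-i\xi)}/(1-e^{-\lambda(\eta-i\xi)})$, which diverges. I would resolve this exactly as in the proof of \cref{south_thm_lap}: obtain the $l=1$ formulas as the $\beta\to0$ limit of the modified Helmholtz multipole lemma in \cref{sec:yukawamultipole}, isolate the $m=0$ mode, Taylor-expand $e^{-\chi_0(y-y')}=e^{-\beta(y-y')}$, $e^{-2Q_0}/(1-e^{-Q_0})$, and $1/\chi_0=1/\beta$ in powers of $\beta$, and use the identity \eqref{k1limitform}. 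Discarding the divergent terms — legitimate because the periodic Laplace potential is defined only up to an additive constant and, for a genuine source distribution, charge neutrality removes exactly those terms, as in \cref{south_thm_lap} — the finite remainder is linear in $y$ and equals $\tfrac{\pi i}{d\eta}y$, which is the $\delta_{l1}\tfrac{\pi i}{d\eta}y$ correction in $K_2^{south}$ and $K_2^{north}$; no such correction arises for the west/east kernels since there the $m$-summation is over a half-line and the resulting integral is finite.

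As a final consistency check, one verifies mode by mode that the $\beta\to0$ limit of the modified Helmholtz plane-wave formulas — using $\chi_m\to2\pi|m|/d$ and the limit relations noted after \eqref{laplacepw} — reproduces the west/east integral representations and, for $l\ge2$, the south/north series above, which also confirms that the normalization of $Q_m$ and of the prefactors is consistent with \cref{chialphadef} and with the charge-case formulas of \cref{sec:lap}.
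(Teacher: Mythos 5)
Your proposal is correct and follows essentially the derivation the paper intends (and omits as straightforward): substitute the plane-wave representations \eqref{laplacepw} branch by branch over each block of image cells, use Poisson summation \eqref{deltarep} together with the geometric series in $n$ for the south/north kernels and the geometric series in $m$ (plus the three-term $n$ sum) for the east/west kernels, and recover the $l=1$ correction as the charge-neutral $\beta\to 0$ limit in the spirit of \cref{south_thm_lap} and \eqref{k1limitform}. The only cosmetic gloss is that the finite part of the $l=1$, $m=0$ mode also contains target-independent pieces (a term in $y'$ and a pure constant) that must be discarded along with the divergent constant by the same additive-constant/charge-neutrality argument, after which exactly $\delta_{l1}\tfrac{\pi i}{d\eta}\,y$ remains.
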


The derivation of the associated periodizing operators
is straightforward and omitted. Note that the integrals
in \cref{lowranksouth_lapmultipole} and \cref{lowrankwest_lapmultipole}
diverge at the origin when $l=1$, but the divergence is compensated for
in the associated periodizing operators
under the assumption of charge neutrality.

\section{Periodizing operators for the Stokes stresslet}
\label{sec:stresslet}
The stresslet for the Stokes equation is defined by the formula
\be
T^{(S)}_{ijk}(\bt,\bs) = \frac{\partial G^{(S)}_{ij}(\bt,\bs)}{\partial x_k}
+\frac{\partial G^{(S)}_{jk}(\bt,\bs)}{\partial x_i}
-p_j(\bt,\bs)\delta_{jk},
\ee
where $G^{(S)}_{ij}$ is the $ij$-th component of the Stokeslet in \cref{stokeslet},
$p_j$ is the $j$th component of the pressurelet in \cref{pressurelet},
and the partial derivatives are with respect to the source point $\bs$. It is inconvenient
to write down the periodizing operators for the stresslet due to its tensor structure.
In practice, it is often combined with a vector $\bn$ to form the kernel of the double
layer potential operator or its adjoint operator, when $\bn=(n_1,n_2)$ is the unit normal vector
at the source point $\bs$ or the target point $\bt$, respectively. Thus, we will write
down the periodizing operators for the kernel $\bD^{(S)}$ of the double layer
potential operator defined by
the formula $D^{(S)}_{ij}=T^{(S)}_{jik}n_k$ instead.

\begin{lemma}
For the standard unit cell $\C$ discussed in the main text,
let $K_2^{south}$, $K_2^{north}$, $K_2^{west}$, $K_2^{east}$ denote
the far-field parts of the periodizing operator
for the kernel of the Stokes double layer potential
subject to doubly periodic boundary conditions.
That is,
\be
\ba
K_2^{south}(\bt,\bs) &=
\sum_{n=-\infty}^{-2}\sum_{m=-\infty}^\infty \bD^{(S)}(\bt,\bs+\bl_{mn}), \\
K_2^{north}(\bt,\bs) &=
\sum_{n=2}^\infty \sum_{m=-\infty}^\infty \bD^{(S)}(\bt,\bs+\bl_{mn}), \\
K_2^{west}(\bt,\bs) &=
\sum_{n=-1}^{1} \sum_{m=-\infty}^{-4} \bD^{(S)}(\bt,\bs+\bl_{mn}), \\
K_2^{east}(\bt,\bs) &=
\sum_{n=-1}^{1} \sum_{m=4}^\infty \bD^{(S)}(\bt,\bs+\bl_{mn}).
\ea
\ee
Let $\alpha_m=2\pi m/d$ and $Q_m=2\pi m(\eta-i\xi)/d$.
Then
\be
\ba
\K_2^{south}(\bt,\bs)
&=\frac{y}{2d\eta} \begin{bmatrix} n_2 & n_1\\ n_1 & n_2 \end{bmatrix}
+\frac{1}{2d} \sum_{\substack{m=-\infty \\ m\ne 0}}^\infty\left\{
\begin{bmatrix} 2i\sign(m) n_1-n_2 & -n_1 \\ -n_1 & -n_2 \end{bmatrix}\right.\\
&\left.-(i\alpha_m n_1-|\alpha_m| n_2)\left(y-y'+\frac{2-e^{-Q_m}}{1-e^{-Q_m}}\eta\right)
\begin{bmatrix} 1 & i\sign(m) \\ i\sign(m) & -1 \end{bmatrix}\right\}\\
&\qquad\qquad\qquad\cdot \frac{e^{-2Q_m}}{1-e^{-Q_m}}
e^{-|\alpha_m|(y-y')+i\alpha_m(x-x')},\\
\K_2^{north}(\bt,\bs)
&=\frac{y}{2d\eta} \begin{bmatrix} n_2 & n_1\\ n_1 & n_2 \end{bmatrix}
+\frac{1}{2d} \sum_{\substack{m=-\infty \\ m\ne 0}}^\infty\left\{
\begin{bmatrix} 2i\sign(m) n_1+n_2 & n_1 \\ n_1 & n_2 \end{bmatrix}\right.\\
&\left.-(i\alpha_m n_1+|\alpha_m| n_2)
\left(y-y'-\frac{2-e^{-\overline{Q_m}}}{1-e^{-\overline{Q_m}}}\eta\right)
\begin{bmatrix} -1 & i\sign(m) \\ i\sign(m) & 1 \end{bmatrix}\right\}\\
&\qquad\qquad\qquad\cdot \frac{e^{-2\overline{Q_m}}}{1-e^{-\overline{Q_m}}}
e^{|\alpha_m|(y-y')+i\alpha_m(x-x')},
\ea
\ee
\be
\ba
\K_2^{west}(\bt,\bs)&=
\frac{1}{4\pi}\sum_{n=-1}^1 \int_{-\infty}^\infty
\frac{e^{-4|\lambda|d}}{1-e^{-|\lambda|d}} e^{-|\lambda|(x-x'-n\xi)}e^{i\lambda(y-y'-n\eta)}
\cdot\left\{\begin{bmatrix} -n_1 & -n_2 \\ -n_2 & -n_1+2i\sign(\lambda)n_2 \end{bmatrix}\right.\\
&\left.  - (-|\lambda|n_1+i\lambda n_2)
\left(x-x'-n\xi+\frac{4-3e^{-|\lambda|d}}{1-e^{-|\lambda|d}}d\right)
\begin{bmatrix}-1 & i\sign(\lambda)\\ i\sign(\lambda)& 1\end{bmatrix}\right\}
d\lambda,\\
\K_2^{east}(\bt,\bs)&=
\frac{1}{4\pi}\sum_{n=-1}^1 \int_{-\infty}^\infty
\frac{e^{-4|\lambda|d}}{1-e^{-|\lambda|d}} e^{|\lambda|(x-x'-n\xi)}e^{i\lambda(y-y'-n\eta)}
\cdot\left\{\begin{bmatrix} n_1 & n_2 \\ n_2 & n_1+2i\sign(\lambda)n_2 \end{bmatrix}\right.\\
&\left.  - (|\lambda|n_1+i\lambda n_2)
\left(x-x'-n\xi-\frac{4-3e^{-|\lambda|d}}{1-e^{-|\lambda|d}}d\right)
\begin{bmatrix}1 & i\sign(\lambda)\\ i\sign(\lambda)& -1\end{bmatrix}\right\}
d\lambda.
\ea
\ee
Similarly, for singly periodic case,
\be
\ba
\K_1^{west}(\bt,\bs)&=
\sum_{m=-\infty}^{-2} \bD^{(S)}(\bt,\bs+\bl_{m0}), \\
&=\frac{1}{4\pi} \int_{-\infty}^\infty
\frac{e^{-2|\lambda|d}}{1-e^{-|\lambda|d}} e^{-|\lambda|(x-x')}e^{i\lambda(y-y')}
\cdot\left\{\begin{bmatrix} -n_1 & -n_2 \\ -n_2 & -n_1+2i\sign(\lambda)n_2 \end{bmatrix}\right.\\
&\left.  - (-|\lambda|n_1+i\lambda n_2)
\left(x-x'+\frac{2-e^{-|\lambda|d}}{1-e^{-|\lambda|d}}d\right)
\begin{bmatrix}-1 & i\sign(\lambda)\\ i\sign(\lambda)& 1\end{bmatrix}\right\}
d\lambda,\\
\K_1^{east}(\bt,\bs)
&=\sum_{m=2}^{\infty} \bD^{(S)}(\bt,\bs+\bl_{m0}), \\
&=\frac{1}{4\pi} \int_{-\infty}^\infty
\frac{e^{-2|\lambda|d}}{1-e^{-|\lambda|d}} e^{|\lambda|(x-x')}e^{i\lambda(y-y')}
\cdot\left\{\begin{bmatrix} n_1 & n_2 \\ n_2 & n_1+2i\sign(\lambda)n_2 \end{bmatrix}\right.\\
&\left.  - (|\lambda|n_1+i\lambda n_2)
\left(x-x'-\frac{2-e^{-|\lambda|d}}{1-e^{-|\lambda|d}}d\right)
\begin{bmatrix}1 & i\sign(\lambda)\\ i\sign(\lambda)& -1\end{bmatrix}\right\}
d\lambda.
\ea
\ee

\end{lemma}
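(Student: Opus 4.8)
The plan is \emph{not} to redo the Sommerfeld/residue derivation, but to obtain the double layer periodizing operators by differentiating the periodizing operators that have already been established for the Stokeslet \eqref{stokeslet} (namely \cref{south_thm_s,west_thm_s}) and for the Stokes pressurelet \eqref{pressurelet} (namely \cref{south_thm_stokes_pressure,west_thm_stokes_pressure}). By its definition in terms of the stresslet $T^{(S)}$, the double layer kernel $\bD^{(S)}$ is a fixed linear combination of first-order source derivatives of $\G^{(S)}$ and of the pressurelet $\p$, each contracted against the normal vector $\bn$. Since we work under charge neutrality \eqref{chargeneutral} — so that, as in the proofs of \cref{south_thm_s,west_thm_s}, the relevant doubly and singly infinite lattice series and the Sommerfeld integrals converge absolutely and uniformly on compact subsets of $\{\bt \ne \bs + \bl_{mn}\}$ — differentiation in the source variable commutes with the sums over image cells. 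Hence the periodic kernels $K_2^{south},K_2^{north},K_2^{west},K_2^{east}$ (and $K_1^{west},K_1^{east}$) for $\bD^{(S)}$ are produced by applying the same first-order differential operators, term by term, to the corresponding periodic kernels for the Stokeslet and pressurelet, and then collecting the $2\times2$ matrix coefficients.

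First I would treat the ``south'' block (``north'' being identical up to the usual conjugation/reflection symmetry). Starting from the plane-wave expansion of the Stokeslet south kernel obtained inside the proof of \cref{south_thm_s} — the $m\neq 0$ terms \eqref{pssm} together with the $m=0$ term \eqref{pss0} — I would apply $n_1\partial_{x'} + n_2\partial_{y'}$ (and the second source-derivative contraction appearing in the stresslet), using that $\partial_{x'}$ hitting $e^{i\alpha_m(x-x')}$ produces $-i\alpha_m$, that $\partial_{y'}$ hitting $e^{-|\alpha_m|(y-y')}$ produces $|\alpha_m|$, and that $\partial_{y'}$ hitting the explicit prefactor $(y-y')$ produces a constant. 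Adding the contribution of the pressurelet south kernel from \cref{south_thm_stokes_pressure} (with the $-n_i$ factor) and combining the matrix coefficients yields the stated formula; the $\sign(m)$ entries in the constant matrix $\begin{bmatrix} 2i\sign(m)\,n_1 - n_2 & -n_1\\ -n_1 & -n_2\end{bmatrix}$ arise precisely from the mixing of the three pieces, and the coefficient $i\alpha_m n_1 - |\alpha_m| n_2$ multiplying the $\big(y-y'+\frac{2-e^{-Q_m}}{1-e^{-Q_m}}\eta\big)$ factor is exactly $-(n_1\partial_{x'}+n_2\partial_{y'})$ acting on the exponential. The $m=0$ term of \eqref{pss0} contributes the $\frac{y}{2d\eta}\begin{bmatrix} n_2 & n_1\\ n_1 & n_2\end{bmatrix}$ piece after the identical charge-neutrality/Taylor-expansion step used in \cref{south_thm_lap,south_thm_s}, once the $m=0$ parts of the two Stokeslet-derivative pieces and of the pressurelet are added.

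Next I would handle the ``west''/``east'' blocks in the same way, now differentiating the Sommerfeld integral \eqref{pssw0} for the Stokeslet west kernel and the integral representation underlying \cref{west_thm_stokes_pressure} \emph{under the integral sign}; this is justified by the exponential decay of the integrands once the first near neighbors are excluded, with charge neutrality removing the apparent $\lambda\to0$ singularity exactly as in the proof of \cref{west_thm_lap}. Here $\partial_{x'}$ on $e^{-|\lambda|(x-x'-n\xi)}$ gives $|\lambda|$, $\partial_{y'}$ on $e^{i\lambda(y-y'-n\eta)}$ gives $-i\lambda$, and $\partial_{x'}$ on the prefactor $(x-x'-n\xi+\cdots)$ gives a constant matrix, producing the coefficient $-|\lambda|n_1+i\lambda n_2$ and the constant matrix in the brace. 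The singly periodic formulas follow from the identical computation with a single sum in $m$ and the factor $e^{-2|\lambda|d}/(1-e^{-|\lambda|d})$ in place of $e^{-4|\lambda|d}/(1-e^{-|\lambda|d})$; note also the shift of the geometric-series constant from $\frac{4-3e^{-|\lambda|d}}{1-e^{-|\lambda|d}}$ (doubly periodic, four cells excluded) to $\frac{2-e^{-|\lambda|d}}{1-e^{-|\lambda|d}}$ (singly periodic, two cells excluded), which comes straight from differentiating the corresponding Stokeslet west formulas. The main obstacle is entirely organizational: keeping the three-term decomposition of $\bD^{(S)}$ straight through the differentiations, placing the $i$ and $j$ indices and the $\sign$ factors correctly, and checking that the $m=0$ and $\lambda\to0$ limits combine as claimed. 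As a consistency check I would verify that every column of the resulting $K_2^{south}$ etc.\ is divergence-free and every entry biharmonic in the target variable; this is automatic because those properties hold for the Stokeslet and pressurelet periodic kernels and are preserved by the operations used, so no new analytic difficulty arises beyond what already appears in \cref{south_thm_s,west_thm_s}.
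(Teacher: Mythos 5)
Your proposal is correct and is essentially the derivation the paper intends: the appendix states this lemma without a written proof precisely because, once the stresslet is defined through source-point derivatives of the Stokeslet \eqref{stokeslet} and pressurelet \eqref{pressurelet}, the stated formulas follow by differentiating the periodic kernels of \cref{south_thm_s,west_thm_s,south_thm_stokes_pressure,west_thm_stokes_pressure} (and their north/east analogues) term by term in their plane-wave/Sommerfeld forms, with the $m=0$ and $\lambda\to 0$ modes handled by the same $\beta\to 0$/charge-neutrality device --- exactly your plan. The only imprecision is your blanket appeal to absolute convergence of the lattice series ``under charge neutrality'' (neutrality constrains summed potentials, not the kernel series, and the Stokeslet lattice sums are not absolutely convergent), but this is harmless since your actual computation differentiates the already-regularized expansions \eqref{pssm}, \eqref{pss0} and \eqref{pssw0} mode by mode, so the exchange of source differentiation and summation is justified at the level of the exponentially convergent plane-wave series (or for $\beta>0$ and then in the limit), which is all that is needed.
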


\bibliographystyle{siam}
\bibliography{journalnames,fmm}

\end{document}